%% 
%% Copyright 2007-2020 Elsevier Ltd
%% 
%% This file is part of the 'Elsarticle Bundle'.
%% ---------------------------------------------
%% 
%% It may be distributed under the conditions of the LaTeX Project Public
%% License, either version 1.2 of this license or (at your option) any
%% later version.  The latest version of this license is in
%%    http://www.latex-project.org/lppl.txt
%% and version 1.2 or later is part of all distributions of LaTeX
%% version 1999/12/01 or later.
%% 
%% The list of all files belonging to the 'Elsarticle Bundle' is
%% given in the file `manifest.txt'.
%% 

%% Template article for Elsevier's document class `elsarticle'
%% with numbered style bibliographic references
%% SP 2008/03/01
%%
%% 
%%
%% $Id: elsarticle-template-num.tex 190 2020-11-23 11:12:32Z rishi $
%%
%%
\documentclass[preprint,12pt]{elsarticle}

%% Use the option review to obtain double line spacing
%% \documentclass[authoryear,preprint,review,12pt]{elsarticle}

%% Use the options 1p,twocolumn; 3p; 3p,twocolumn; 5p; or 5p,twocolumn
%% for a journal layout:
%% \documentclass[final,1p,times]{elsarticle}
%% \documentclass[final,1p,times,twocolumn]{elsarticle}
%% \documentclass[final,3p,times]{elsarticle}
%% \documentclass[final,3p,times,twocolumn]{elsarticle}
%% \documentclass[final,5p,times]{elsarticle}
%% \documentclass[final,5p,times,twocolumn]{elsarticle}

%% For including figures, graphicx.sty has been loaded in
%% elsarticle.cls. If you prefer to use the old commands
%% please give \usepackage{epsfig}

%% The amssymb package provides various useful mathematical symbols
\usepackage{amssymb}
%% The amsthm package provides extended theorem environments
\usepackage{amsthm}

%I: Added amsmath
\usepackage{amsmath}

%I: Added colors
\usepackage[dvipsnames]{xcolor}

%I: Theorem macros
\newtheorem{thm}{Theorem}[section]

\newtheorem{lemma}[thm]{Lemma}
\newtheorem{cor}[thm]{Corollary}
\newtheorem{ques}{Question}

\theoremstyle{definition}
\newtheorem{defn}[thm]{Definition}
\newtheorem{example}[thm]{Example}

\theoremstyle{remark}

\newtheorem*{claim}{Claim}

%% The lineno packages adds line numbers. Start line numbering with
%% \begin{linenumbers}, end it with \end{linenumbers}. Or switch it on
%% for the whole article with \linenumbers.
%% \usepackage{lineno}

\journal{Annals of Pure and Applied Logic}

\begin{document}

\begin{frontmatter}

%% Title, authors and addresses

%% use the tnoteref command within \title for footnotes;
%% use the tnotetext command for theassociated footnote;
%% use the fnref command within \author or \address for footnotes;
%% use the fntext command for theassociated footnote;
%% use the corref command within \author for corresponding author footnotes;
%% use the cortext command for theassociated footnote;
%% use the ead command for the email address,
%% and the form \ead[url] for the home page:
%% \title{Title\tnoteref{label1}}
%% \tnotetext[label1]{}
%% \author{Name\corref{cor1}\fnref{label2}}
%% \ead{email address}
%% \ead[url]{home page}
%% \fntext[label2]{}
%% \cortext[cor1]{}
%% \affiliation{organization={},
%%             addressline={},
%%             city={},
%%             postcode={},
%%             state={},
%%             country={}}
%% \fntext[label3]{}

\title{Parametrizing the Ramsey theory of vector spaces I: Discrete spaces}

%% use optional labels to link authors explicitly to addresses:
%% \author[label1,label2]{}
%% \affiliation[label1]{organization={},
%%             addressline={},
%%             city={},
%%             postcode={},
%%             state={},
%%             country={}}
%%
%% \affiliation[label2]{organization={},
%%             addressline={},
%%             city={},
%%             postcode={},
%%             state={},
%%             country={}}

\author[1]{Iian B. Smythe\fnref{fn1}}

\affiliation[1]{organization={Department of Mathematics and Statistics, University of Winnipeg},%Department and Organization
            addressline={515 Portage Avenue}, 
            city={Winnipeg}, 
            state={Manitoba},
            postcode={R3B 2E9},
            country={Canada}}

\fntext[fn1]{The author is partially supported by a Natural Sciences and Engineering Research Council of Canada Discovery Grant (RGPIN-2023-03343).}

\begin{abstract}
%% Text of abstract
	We show that the Ramsey theory of block sequences in infinite-dimensional discrete vector spaces can be parametrized by perfect sets. As special cases, we prove combinatorial dichotomies for definable families of partitions and linear transformations on those spaces. We also consider the extent to which analogues of selective ultrafilters in this setting are preserved by Sacks forcing.
\end{abstract}

%%Graphical abstract
%\begin{graphicalabstract}
%\includegraphics{grabs}
%\end{graphicalabstract}

%%Research highlights
%\begin{highlights}
%\item Research highlight 1
%\item Research highlight 2
%\end{highlights}

\begin{keyword}
%% keywords here, in the form: keyword \sep keyword
	Parametrized Ramsey theory \sep infinite-dimensional Ramsey theory \sep infinite-dimensional vector spaces \sep block sequences \sep Sacks forcing
%% PACS codes here, in the form: \PACS code \sep code

%% MSC codes here, in the form: \MSC code \sep code
%% or \MSC[2008] code \sep code (2000 is the default)
	\MSC[2020] 05D10 \sep 03E05 \sep 15A03
\end{keyword}

\end{frontmatter}

%% \linenumbers

%% main text
\section{Introduction}\label{sec:intro}

The aim of this article is to present parametrized versions of Ramsey-theoretic dichotomies for infinite-dimensional vector spaces. Our results will typically take the form: for any suitably definable family of partitions, parametrized by reals, of these spaces or related structures, there is an infinite-dimensional subspace on which uncountably many of those partitions all have the same prescribed extremal behavior. We will explore consequences of these results, both mathematical and metamathematical.

Our setting is a countably infinite-dimensional discrete vector space $E$ over a countable, possibly finite, field $F$, having a distinguished basis $(e_n)_{n\in\omega}$. For concreteness, one may take $E=\bigoplus_{n\in\omega} F$ and let $e_n$ be the $n$th unit coordinate vector. The set of nonzero vectors in $E$ will be denoted by $E^\times$. For any vector $v\in E^\times$, its \emph{support} is the finite set $\mathrm{supp}(v)$ of indices $i\in\omega$ such that $e_i$ has a nonzero coefficient when $v$ is expressed as a linear combination of the $e_n$'s. By a \emph{subspace} of $E$, we will always mean a linear subspace. Throughout, the zero vector will typically be ignored.

The primary obstacle to a satisfactory infinitary\footnote{There is an analogue of the finite form of Ramsey's Theorem, for finite-dimensional vector spaces over finite fields, due to Graham, Leeb, and Rothschild \cite{MR0306010}.} Ramsey theory for vector spaces is the failure of a natural analogue of the pigeonhole principle. For any finite partition of $E^\times$, an infinite-dimensional subspace is \emph{homogeneous} if all of its nonzero vectors are entirely contained in one piece of the partition. When $|F|=2$, vectors may be identified with their supports in $\mathrm{FIN}$, the set of all nonempty finite subsets of $\omega$, and Hindman's Theorem \cite{MR0349574} says that any finite partition of $\mathrm{FIN}$ admits an infinite-dimensional homogeneous subspace. However, this fails over all other fields due to the existence of asymptotic pairs.

\begin{defn}
	\begin{enumerate}[(a)]
		\item A set $A\subseteq E^\times$ is \emph{asymptotic} if it meets every infinite-dimensional subspace of $E$.
		\item An \emph{asymptotic pair} is a pair of disjoint asymptotic sets.
	\end{enumerate}
\end{defn}

\begin{example}\label{ex:asym_pair}
	Suppose that $|F|>2$. The \emph{oscillation} of a nonzero vector $v$ is the number of times the nonzero coefficients change in its basis expansion with respect to $(e_n)_{n\in\omega}$, read from left-to-right. That is, if $\mathrm{supp}(v)=\{{n_0},\ldots,{n_k}\}$, where $n_0<\cdots<n_k$, and $v=\sum_{i=0}^k a_ie_{n_i}$, then
	\[
		\mathrm{osc}(v)=|\{i<k:a_i\neq a_{i+1}\}|.
	\]	
	It is shown in the proof of Theorem 7 in \cite{MR2737185} that on any infinite-dimensional subspace of $E$, the range of $\mathrm{osc}$ contains arbitrarily long intervals. It follows that the sets %(it is a so-called \emph{thick set})
	\begin{align*}
		A_0 &= \{v\in E^\times:\mathrm{osc}(v)\text{ is even}\}\\
		A_1 &= \{v\in E^\times:\mathrm{osc}(v)\text{ is odd}\}
	\end{align*}
	form an asymptotic pair. Note that the oscillation map, and thus the sets $A_0$ and $A_1$, are invariant under multiplication by nonzero scalars.
\end{example}

This failure of the pigeonhole principle means that we can only assert the following, in general:
\begin{quote}\label{weak_PHP}
	($\dagger$) \emph{For any $A\subseteq E^\times$, either there is an infinite-dimensional subspace $V$ of $E$ such that $A\cap V=\emptyset$ or $A$ is asymptotic.}
\end{quote}
Here, we view $A$ as a partition of $E^\times$ into $A$ and its complement. While the statement ($\dagger$) is trivially true by definition of ``asymptotic'', we believe that it is the appropriate ``weak pigeonhole principle'' for infinite-dimensional vector spaces, something that will become increasingly clear in the parametrized setting (see Section 3 of \cite{MR4516184} for a related discussion).

The primary results we wish to ``parametrize'' are dichotomies for definable partitions of certain spaces of sequences in $E$. A \emph{block sequence} in $E$ is a sequence of nonzero vectors $x_n$ such that
\[
	\max(\mathrm{supp}(x_n))<\min(\mathrm{supp}(x_{n+1}))
\] 
for all $n$ less than the length of the sequence (which may be finite or $\omega$). Note that any block sequence is linearly independent. It is easy to see that every infinite-dimensional subspace of $E$ contains an infinite block sequence (Lemma 2.1 in \cite{MR3864398}).

The set of all infinite block sequences in $E$ will be denoted by $E^{[\infty]}$. As $E$ is discrete, $E^{[\infty]}$ inherits a Polish topology, and thus a structure of Borel and analytic sets, as a subspace of the product space $E^\omega$. The set of all finite block sequences will be denoted by $E^{[<\infty]}$.

We write $\langle X\rangle$ for the linear span of a block sequence $X$, and order block sequences according to their spans: $Y\preceq X$ if $\langle Y\rangle\subseteq\langle X\rangle$, in which case we abuse terminology and say that $Y$ is a \emph{block subsequence} of $X$. For $m\in\omega$, we write $X/m$ (or $X/\vec{v}$ for a finite sequence of vectors $\vec{v}$) for the tail of $X$ whose supports are strictly above $m$ (or that of $\vec{v}$). We write $Y\preceq^* X$ if $Y/m \preceq X$ for some $m\in\omega$. More generally, if $V$ is a subspace of $E$, we write $V/m$ (or $V/\vec{v}$) for $V\cap\langle(e_n)_{n>m}\rangle$ (or $V\cap\langle(e_n)_{n>\max(\mathrm{supp}(\vec{v}))}\rangle$).

The ordering on block sequences has the following important property: if $(X_n)_{n\in\omega}$ is a sequence in $E^{[\infty]}$ such that $X_{n+1}\preceq^* X_n$ of all $n\in\omega$, then there is an $X\in E^{[\infty]}$ such that $X\preceq^* X_n$ for all $n\in\omega$. Such an $X$ is called a \emph{diagonalization} of the sequence $(X_n)_{n\in\omega}$.

The dichotomies for block sequences are phrased in terms of certain games. Given an $X\in E^{[\infty]}$, the \emph{asymptotic game} $F[X]$ played below $X$ is the two player game where player I goes first and plays a natural number $n_0$, and player II responds with a nonzero vector $y_0\in\langle X/n_0\rangle$. They continue in this fashion, alternating with I playing an $n_k\in\omega$ and II playing a $y_k\in\langle X/n_k\rangle$ for each $k\in\omega$. We further demand that II's moves form a block sequence.
\[
	F[X]:\quad
	\begin{matrix}
		\mathrm{I}  & n_0 &       & n_1 &       & n_2 &       &\cdots\\
		\mathrm{II} &	   & y_0 &       & y_1 &       & y_2 &\cdots
	\end{matrix}
\]

The \emph{Gowers game} $G[X]$ played below $X$ is defined similarly, with I now playing infinite block sequences $Y_k\preceq X$ and II responding with nonzero vectors $y_k\in\langle Y_k\rangle$ for each $k\in\omega$, again forming a block sequence.
\[
	G[X]:\quad
	\begin{matrix}
		\mathrm{I}  & Y_0 &       & Y_1 &       & Y_2 &       &\cdots\\
		\mathrm{II} &	   & y_0 &       & y_1 &       & y_2 &\cdots
	\end{matrix}
\]
Note that $F[X]$ may be viewed as a variant of $G[X]$ where I is restricted to playing tails of $X$. In both games, the \emph{outcome} is the block sequence $(y_k)_{k\in\omega}$ consisting of II's moves. 

\emph{Strategies} for the players in these games can be defined as functions from all finite sequences of possible previous moves by the opposing player to valid moves by the current player. Given a set $\mathbb{A}\subseteq E^{[\infty]}$, we say that a player has a strategy for \emph{playing into (or out of)} $\mathbb{A}$ if they have a strategy all of whose outcomes lie in (or out of) $\mathbb{A}$. We stress again that the ``outcomes'' here are the result of II's moves, even when referring to a strategy for I.

Isolating the combinatorial content of a dichotomy for block sequences in Banach spaces, due to Gowers \cite{MR1954235}, Rosendal proved the following:

\begin{thm}[Rosendal \cite{MR2604856}]\label{thm:Rosendal}
	If $\mathbb{A}\subseteq E^{[\infty]}$ is analytic, then there is an $X\in E^{[\infty]}$ such that either:
	\begin{enumerate}[(1)]
		\item I has a strategy in $F[X]$ for playing out of $\mathbb{A}$, or
		\item II has a strategy in $G[X]$ for playing into $\mathbb{A}$.
	\end{enumerate}
\end{thm}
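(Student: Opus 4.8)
The plan is to prove this as a determinacy-plus-fusion dichotomy, following the general template for Ramsey-theoretic theorems (Galvin--Prikry, Ellentuck, Gowers). The central idea is that for a sufficiently ``nice'' (analytic) target set $\A$, one can always refine to a block subsequence $X$ on which the combinatorics of the relevant games collapse to one of two clean alternatives. First I would reduce to the case where $\A$ is closed (or at least take the analytic case as a limit of the clopen case via an unfolding/approximation argument), since the core game-theoretic phenomenon is already visible for simpler pointclasses and analytic determinacy can be obtained by passing to an auxiliary tree that witnesses membership in $\A$ and playing an unfolded game in which II also builds a branch through that tree.

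The key notion to introduce is that of a finite block sequence $\vec{v}=(y_0,\dots,y_{k-1})$ being \emph{good} (or \emph{accepting}) for a block subsequence $X$, meaning roughly that II has a strategy in the Gowers game $G[X/\vec{v}]$ to extend $\vec{v}$ into $\A$; the complementary notion, \emph{rejecting}, should say that I has a strategy in the asymptotic game $F[X/\vec{v}]$ to extend $\vec{v}$ out of $\A$. The heart of the proof is a \emph{combinatorial forcing} or \emph{amalgamation} lemma: given any $X$ and any finite $\vec{v}$, one can pass to a block subsequence $Y\preceq X$ that \emph{decides} $\vec{v}$, i.e.\ on which $\vec{v}$ is either good or rejecting, and moreover this can be done uniformly so that all finite extensions get decided simultaneously. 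The mechanism here is that if $\vec{v}$ is \emph{not} rejecting for any block subsequence of $X$, then by (unfolded) determinacy of the asymptotic game the opposite player has a winning strategy, which one converts into a witness for goodness. The main technical subtlety, and the step I expect to be hardest, is precisely this passage between the two different games $F[X]$ and $G[X]$: I's restricted moves in $F[X]$ (tails of $X$) versus I's full moves in $G[X]$ (arbitrary block subsequences). The determinacy one gets is for a single game, and one must carefully match II's winning strategy in one game against I's in the other, exploiting that every block subsequence is, after refining $X$, approximated by tails.

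With the deciding lemma in hand, the proof concludes by a \textbf{fusion argument}. I would build a $\preceq$-decreasing sequence $X=X_0\succeq X_1\succeq\cdots$ of block subsequences together with a block sequence $(x_n)$ with $x_n\in\langle X_n\rangle$, arranged so that $X_{n+1}$ decides every finite block sequence drawn from $(x_0,\dots,x_{n-1})$ extended by one vector, and so that the supports converge in the standard fashion; the diagonal block sequence $X_\infty=(x_n)$ is then a genuine block subsequence of $X$ on which the status (good vs.\ rejecting) of every finite block sequence is stabilized. The final case split is: if the empty sequence $\emptyset$ is rejecting for $X_\infty$, then by definition I has a strategy in $F[X_\infty]$ for playing out of $\A$, giving alternative (i); otherwise $\emptyset$ is good, and the stabilized goodness at every node assembles, via a strategy-stitching argument along the fusion, into a single strategy for II in $G[X_\infty]$ playing into $\A$, giving alternative (ii).

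The main obstacle, as noted, is the asymmetry between the two games: the theorem trades a strategy for I in the \emph{weaker} game $F[X]$ against a strategy for II in the \emph{stronger} game $G[X]$. One must verify that ``not rejecting on any subsequence'' genuinely forces goodness, and the cleanest route is to set up an auxiliary unfolded game whose determinacy (from the analytic hypothesis, via the tree representation of $\A$) yields a winning strategy for one player, then to argue that a winning strategy for I in the asymptotic game below some subsequence is exactly the rejecting witness, while the failure of such at every subsequence lets II win the Gowers game. Managing the bookkeeping so that the fusion respects both the support-growth requirement and the requirement that each newly added vector lie in the current subspace $\langle X_n\rangle$ is where the care lies, but it is routine once the deciding lemma is established.
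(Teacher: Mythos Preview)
This theorem is not proved in the paper: it is quoted as Theorem~5 of Rosendal~\cite{MR2604856} and used as a known result. So there is no proof here to compare against directly.

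That said, your outline is essentially the correct one, and it matches both Rosendal's original argument and the combinatorial forcing this paper carries out in Section~\ref{sec:analytic} for the parametrized version. One terminological point: you have the accept/reject labels swapped relative to the paper's (and Rosendal's) conventions. In the paper, $(\vec{x},n)$ \emph{accepts} $(p,X)$ when I has a strategy in $F[\vec{x},X]$ playing out of the target, and \emph{rejects} when no refinement is accepted; goodness for II is not taken as a primitive but emerges at the end. The argument is driven entirely from I's side: one diagonalizes so that every $(\vec{x},n)$ decides, and if the root is not accepted then \emph{every} refinement fails to give I a strategy, which is what produces II's strategy in $G$ by stitching together local rejections along a Suslin scheme. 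Your remark about ``approximating block subsequences by tails'' is not really how the $F$-versus-$G$ asymmetry is bridged; rather, rejection is a $\preceq$-hereditary property, so II can respond to any move $Y\preceq X$ by I in $G[X]$ using the fact that rejection persists below $Y$. Otherwise your plan (unfolding for the analytic case, combinatorial forcing to decide, fusion/diagonalization, final case split) is on target.
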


The conclusions (1) and (2) in Theorem \ref{thm:Rosendal} are mutually exclusive and provide a strategy for the player in the game which is \textit{a priori} more difficult for that player. If $\mathbb{A}$ is a clopen set in $E^{[\infty]}$ which depends only on the first coordinate, then $\mathbb{A}$ induces a partition of $E^\times$ and the games $F[X]$ and $G[X]$ are decided after one move; in this case, Theorem \ref{thm:Rosendal} reduces to a form of the weak pigeonhole principle ($\dagger$) above for the subspace spanned by $X$.

In \cite{MR3864398}, the author showed that Theorem \ref{thm:Rosendal} could be ``localized'' in the sense that the witness $X$ to the conclusion could always be found in a prescribed $(p^+)$-family (defined in Section \ref{sec:families} below) of block sequences:

\begin{thm}[Smythe \cite{MR3864398}]\label{thm:local_Rosendal}
	Let $\mathcal{H}$ be a $(p^+)$-family on $E$. If $\mathbb{A}\subseteq E^{[\infty]}$ is analytic, then there is an $X\in\mathcal{H}$ such that either:
	\begin{enumerate}[(1)]
		\item I has a strategy in $F[X]$ for playing out of $\mathbb{A}$, or
		\item II has a strategy in $G[X]$ for playing into $\mathbb{A}$.
	\end{enumerate}
\end{thm}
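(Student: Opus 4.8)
The plan is to relativize Rosendal's combinatorial forcing proof of Theorem \ref{thm:Rosendal} to $\LH$: every appeal to ``pass to a block subsequence $Y \preceq X$'' is replaced by ``pass to $Y \in \LH$ with $Y \preceq X$ (or $Y \preceq^* X$)'', and the diagonalizations that glue these passages together are carried out inside $\LH$ using its defining $(p^+)$-property. One cannot simply invoke an abstract local Ellentuck theorem for a topological Ramsey space here, precisely because block sequences fail the pigeonhole principle (the asymptotic pairs of Example \ref{ex:asym_pair}); the game-theoretic formulation, and hence a game-relativized forcing, is essential. The organizing device is a relativized accept/reject dichotomy. For a finite block sequence $a \in E^{[<\infty]}$ and $X \in \LH$ with $a$ compatible with $X$, say that $X$ \emph{accepts} $a$ if II has a strategy in the Gowers game $G[X/a]$ all of whose outcomes $o$ satisfy $a \concat o \in \A$, and that $X$ \emph{rejects} $a$ if no $Y \in \LH$ with $Y \preceq X$ accepts $a$. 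Since $\LH$ is downward closed, rejection passes to smaller members, and any $X$ not rejecting $a$ has some accepting $Y \in \LH$ below it; thus every pair $(a,X)$ can be \emph{decided} inside $\LH$.

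The first substantial step is a simultaneous-decision (fusion) lemma: from any $X_0 \in \LH$ one can extract $X \in \LH$, $X \preceq^* X_0$, deciding every finite block sequence compatible with $X$. I would build a $\preceq^*$-decreasing sequence $X_0 \succeq^* X_1 \succeq^* \cdots$ in $\LH$ in which $X_{k+1}$ decides all finite block sequences built from the first $k$ blocks of $X_k$, and then invoke the $(p^+)$-property to produce a single $X \in \LH$ with $X \preceq^* X_k$ for all $k$; routine fusion bookkeeping then shows $X$ decides every $a$ compatible with it. This step is exactly where the localization consumes the hypothesis: in the unlocalized theorem any diagonal block subsequence suffices, but here the pseudo-intersection must itself lie in $\LH$, which is what $(p^+)$ guarantees.

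Next I would prove the propagation lemma converting rejection into an explicit strategy for I in the \emph{asymptotic} game. If $X \in \LH$ decides everything and rejects $a$, then there is a threshold $n$ such that $X$ rejects $a \concat y$ for every nonzero $y \in \langle X\rangle$ with $n < \min(\supp(y))$; otherwise one could assemble, by diagonalizing accepting subsequences inside $\LH$, a $Y \in \LH$ below $X$ accepting $a$, contradicting rejection. Iterating, I plays exactly these thresholds $n_k$, keeping the position rejected forever, so (for closed $\A$) the resulting outcome never enters $\A$: this is a strategy for I in $F[X]$ out of $\A$, and crucially I only ever plays natural numbers, which is what confines it to the restricted game $F[X]$ rather than $G[X]$. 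Conversely, if the decided $X$ accepts $\emptyset$, then by definition II has a strategy in $G[X]$ into $\A$. Applying the fusion lemma and splitting on whether $X$ accepts or rejects $\emptyset$ therefore settles the dichotomy for closed $\A$.

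To reach \emph{analytic} $\A$ I would unfold, writing $\A = p[\LC]$ for a closed $\LC \subseteq E^{[\infty]} \times \omega^\omega$ and rerunning the accept/reject/fusion machinery in the augmented game in which the relevant player additionally builds a branch of $\omega^\omega$ witnessing membership in $\LC$; proving the closed dichotomy for this auxiliary payoff and projecting recovers the statement, with the two strategies read off as before. The main obstacle I anticipate is concentrated in the interaction with $\LH$: keeping the fusion limit inside the family (the crux handled by $(p^+)$), and arranging that both the unfolding and the threshold selection in the propagation lemma are performed by passing only to $\preceq$- (or $\preceq^*$-) subsequences within $\LH$, so that no step silently escapes the family. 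Everything else is a faithful relativization of Rosendal's argument.
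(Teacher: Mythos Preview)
This theorem is not proved in the present paper; it is quoted from the author's earlier work \cite{MR3864398}, so there is no proof here to compare against. That said, your outline is exactly the framework used there: one shows that analytic sets are $\LH$-strategically Ramsey via a relativized accept/reject combinatorial forcing, with the $(p)$-part of the hypothesis consumed by the fusion step and fullness consumed elsewhere.

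The substantive gap in your sketch is the propagation lemma. You claim that if a decided $X\in\LH$ rejects $a$, then there is a single threshold $n$ beyond which $X$ rejects every $a\concat y$, and you justify the contrapositive by ``diagonalizing accepting subsequences inside $\LH$''. But the negation only gives you, for each $n$, \emph{some} vector $y_n$ above $n$ with $a\concat y_n$ accepted; it does not give you such a vector inside $\langle Y_0\rangle$ for an arbitrary first move $Y_0\preceq Y$ of I in $G[Y/a]$, which is what II needs to begin a winning strategy. Passing from ``cofinal'' to ``asymptotic below some $Y\in\LH$'' is exactly where fullness is required: one considers the set $D$ of vectors $v$ for which $a\concat v$ is rejected (together with incompatible $v$), shows it is $\LH$-dense below $X$ under the rejection hypothesis, and then invokes fullness to land on a $Z\in\LH\restr X$ with $\langle Z\rangle\subseteq D$, from which I's strategy in $F[\vec{x},Z]$ is read off. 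Your phrase ``diagonalizing accepting subsequences'' suggests a $(p)$-type argument, but no amount of diagonalization will bridge this step; it is precisely the place in the proof where the ``$+$'' in $(p^+)$ does work, and your write-up should make that explicit.
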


It is further shown in \cite{MR3864398} that if the family $\mathcal{H}$ satisfies the additional property of being \emph{strategic} (also defined in Section \ref{sec:families}), then Theorem \ref{thm:local_Rosendal} can be extended to all ``reasonable definable'' sets $\mathbb{A}\subseteq E^{[\infty]}$, i.e., those in the inner model $\mathbf{L}(\mathbb{R})$, assuming large cardinal hypotheses. While the set $E^{[\infty]}$ of all block sequences is trivially a strategic $(p^+)$-family, such families may be much smaller; they can (consistently) be filters with respect to the ordering on block sequences. Strategic $(p^+)$-filters can then be characterized as exactly those filters which are generic over $\mathbf{L}(\mathbb{R})$ for the partial order $(E^{[\infty]},\preceq)$, in the sense of forcing, and are said to have ``complete combinatorics''.

This progression of results mirrors those for the space $[\omega]^\omega$ of infinite subsets of the natural numbers $\omega$: theorems of Galvin--Prikry \cite{MR0337630} and Silver \cite{MR0332480} established that if $A$ is an analytic subset of $[\omega]^\omega$, then there is an $x\in[\omega]^\omega$ all of whose infinite subsets are either contained in or disjoint from $A$, a far-reaching generalization of Ramsey's Theorem. These results were then localized to selective coideals by Mathias \cite{MR0491197} and to semiselective coideals by Farah \cite{MR1644345}. Todor\v{c}evi\'{c} showed that selective ultrafilters are generic over $\mathbf{L}(\mathbb{R})$ for $([\omega]^{\omega},\subseteq)$ under large cardinal hypotheses, see again \cite{MR1644345}.

Recall that a subset of a Polish space is \emph{perfect} if it is closed and has no isolated points. Nonempty perfect sets necessarily contain homeomorphic copies of the Cantor space $2^\omega$ and thus are uncountable. The paradigmatic ``parametrized'' Ramsey theorem is the following result, due independently to Miller and Todor\v{c}evi\'{c}:

\begin{thm}[Miller--Todor\v{c}evi\'{c} \cite{MR983001}]\label{thm:para_Silver}
	If $\mathbb{A}\subseteq \mathbb{R}\times[\omega]^\omega$ is analytic, then there is a nonempty perfect set $P\subseteq \mathbb{R}$ and an $x\in[\omega]^\omega$ such that either $P\times [x]^\omega \subseteq \mathbb{A}$ or $(P\times [x]^\omega)\cap \mathbb{A}=\emptyset$.
\end{thm}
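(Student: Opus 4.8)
The plan is to combine the combinatorial forcing behind the Galvin--Prikry--Silver theorem with a Sacks-style fusion that simultaneously builds the perfect set $P$ and the infinite set $x$. Since perfect sets and analyticity transfer between $\R$ and $2^\omega$, I would first replace the parameter line $\R$ by the Cantor space $2^\omega$, so that finite approximations to $P$ are recorded by nodes of a perfect tree $T\subseteq 2^{<\omega}$. I would also first treat the case where $A$ is (basic) clopen in the product topology, and only afterwards lift to analytic $A$ by the Souslin machinery described below.

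The combinatorial heart is a \emph{parametrized decision lemma}. A condition consists of finitely many nodes $\sigma_0,\dots,\sigma_{k-1}\in 2^{<\omega}$ — the current splitting level of the perfect tree being constructed — together with a Galvin--Prikry pair $(s,M)$, where $s$ is a finite subset of $\omega$, $M\in[\omega]^\omega$, and $\max(s)<\min(M)$. Calling the basic clopen set determined by $\sigma_i$ the $i$-th \emph{parameter cell}, the lemma asserts: given a clopen $A$, one can lengthen each $\sigma_i$ and shrink $M$ so that for every $i$ the statement ``$(r,y)\in A$'' is constant as $r$ ranges over the $i$-th cell and $y$ ranges over the infinite extensions of $s$ inside $M$. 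This is obtained by applying the unparametrized Galvin--Prikry decision lemma once for each of the finitely many parameter cells and then diagonalizing the resulting finitely many shrinkings of $M$; the finiteness of the splitting level at each stage is exactly what makes a simultaneous decision possible.

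With the local step in hand, I would run a fusion. At stage $n$ I handle the $n$-th splitting level of the tree (forcing each node to split, so that $P=[\bigcap_n T_n]$ is genuinely perfect rather than a single branch) and use the decision lemma to settle the $n$-th set in a fixed enumeration of the basic clopen sets along the Galvin--Prikry coordinate; the $M_n$ are taken $\subseteq^*$-decreasing and $x$ is their diagonal intersection. At the end, $x$ serves simultaneously as a Galvin--Prikry witness for every section $A_r=\{y:(r,y)\in A\}$ with $r\in P$, so each such $r$ satisfies either $[x]^\omega\subseteq A_r$ or $[x]^\omega\cap A_r=\emptyset$. This partitions $P$ into two relatively Borel pieces; one of them contains a perfect subset $P'\subseteq P$, and on $P'$ the outcome is uniform, yielding either $P'\times[x]^\omega\subseteq A$ or $(P'\times[x]^\omega)\cap A=\emptyset$.

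To pass from clopen to analytic $A$, I would abstract the above into a notion of \emph{parametrized completely Ramsey} set, relative to the product of the Ellentuck topology on $[\omega]^\omega$ with the perfect-set structure on $2^\omega$, and show, exactly as in the Ellentuck and Silver arguments, that these sets form a $\sigma$-field closed under the Souslin operation and containing the open sets; since analytic sets arise from closed sets by the Souslin operation, they are parametrized completely Ramsey, and specializing to the trivial initial condition gives the theorem. The main obstacle I anticipate is the parametrized decision lemma together with the demand that fusion preserve perfectness: one must decide the Ramsey coordinate uniformly across an entire parameter cell while remaining free to split that cell at the next stage, and checking that these two requirements are compatible — and then that the Souslin-closure induction survives in the parametrized setting — is where the real work lies.
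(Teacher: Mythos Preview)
The paper does not prove this statement; it is cited as background (due independently to Miller and Todor\v{c}evi\'{c}), and the paper separately mentions that Pawlikowski later gave a combinatorial, Ellentuck-style proof. So there is no proof in the paper to compare against. Your proposal is essentially the Pawlikowski argument, and its overall architecture is sound: a parametrized Galvin--Prikry decision lemma acting cone-by-cone over the finitely many nodes at the current splitting level of a perfect tree, a fusion that builds $P$ and $x$ simultaneously while forcing splitting at every stage, and then closure of the resulting ``parametrized completely Ramsey'' class under the Souslin operation to pass from the clopen/open case to analytic sets.

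One imprecision is worth flagging. In your fusion you enumerate and decide \emph{all} basic clopen sets and then assert that the resulting $x$ is a Galvin--Prikry witness for every section $A_r$ of the given $A$. That conclusion is correct when $A$ is the clopen set you announced you were treating (it is a finite Boolean combination of basic clopens you have already decided, uniformly on each cone, and your perfect-set-property step then extracts a uniform $P'\subseteq P$). But deciding all basic clopen sets does not by itself make $x$ homogeneous for an arbitrary \emph{analytic} section; that genuinely requires the accept/reject induction along a Souslin scheme that you defer to your final paragraph, not merely a single master fusion. As written, your middle fusion and your final Souslin-closure paragraph are doing different jobs, and the hand-off between them (the clopen/open base case feeding into the Souslin induction) should be made explicit rather than blurred.
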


In other words, if we are given a suitably definable family $\{\mathbb{A}_t:t\in\mathbb{R}\}$ of partitions of $[\omega]^\omega$, then we can find a perfect subfamily $\{\mathbb{A}_t:t\in P\}$ and a set $x$ which is homogeneous for all of these perfectly many partitions simultaneously and in the same way. By standard facts, $\mathbb{R}$ can be replaced by any uncountable Polish space here. Theorem \ref{thm:para_Silver}  was localized to semiselective coideals by Farah in \cite{MR1644345}. The metamathematical counterpart to Theorem \ref{thm:para_Silver}, due to Baumgartner and Laver, is that selective ultrafilters are preserved by Sacks forcing $\mathbb{S}$:

\begin{thm}[Baumgartner--Laver \cite{MR556894}]\label{thm:Baum_Laver}
	If $\mathcal{U}$ is a selective ultrafilter on $\omega$ and $g$ is $\mathbf{V}$-generic for $\mathbb{S}$, then $\mathcal{U}$ generates a selective ultrafilter in $\mathbf{V}[g]$.
\end{thm}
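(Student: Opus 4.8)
The plan is to reduce the statement to two density facts about $\mathbb{S}$-names and prove each by a fusion argument that consumes the selectivity of $\LU$. Writing $g$ also for the generic real, the filter generated by $\LU$ in $\V[g]$ is $\{X\subseteq\omega: \exists U\in\LU\ (U\subseteq X)\}$. To see it is an ultrafilter it suffices to show that, for every name $\dot A$ for a subset of $\omega$, the conditions $q$ for which there is $U\in\LU$ with $q\forces \check{U}\subseteq\dot A$ or $q\forces \check{U}\cap\dot A=\emptyset$ are dense; and to see it is selective it suffices, for every name $\dot f$ for a function in $\omega^\omega$, to show density of the conditions $q$ forcing $\dot f\restr\check{U}$ to be constant or injective for some $U\in\LU$. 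Both density statements, together with genericity, then give the theorem.

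First I would recall the fusion property of Sacks forcing: if $T_0\ge T_1\ge\cdots$ with $T_{n+1}\le_n T_n$ (same nodes through the $n$-th splitting level), then $\bigcap_n T_n\in\mathbb{S}$. Given $\dot A$ and a condition $T$, a routine fusion produces $S\le T$ in which, for each $n$, the statement ``$n\in\dot A$'' is decided below every node at the $n$-th splitting level of $S$. Identifying the branches $[S]$ with $2^\omega$ via the left/right choices at successive splittings, this packages the name as a labeling $\phi\colon 2^{<\omega}\to 2$ with $\dot A(n)=\phi(b\restr n)$ for the generic branch $b$; equivalently, the node $v\in 2^{n}$ of the $n$-th splitting level forces $n\in\dot A$ iff $\phi(v)=1$.

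The heart of the matter—and the step I expect to be the main obstacle—is to uniformize $\phi$ to a single outcome along a perfect set of branches. For $c\in 2$ and a node $v$, set $D^c_v=\{\ell>|v|:\exists\,t\supseteq v,\ |t|=\ell,\ \phi(t)=c\}$. Since $D^0_v\cup D^1_v$ is cofinite, the ultrafilter dichotomy shows that one of $D^0_v, D^1_v$ lies in $\LU$; call $v$ \emph{$c$-good} when $D^c_v\in\LU$. A short dichotomy—either the $0$-good nodes are dense below the stem, or below some node every node is $1$-good—yields a color $c$ and a node below which the $c$-good nodes are dense. I would then run a second fusion below that node, maintaining the invariant that all current splitting leaves are $c$-good: at each stage I first extend each of the finitely many leaves straight to a common level $\ell\in\bigcap D^c_v$ landing on a $c$-colored node (so every level-$\ell$ node has $\phi\equiv c$), then split and, using density of $c$-good nodes, extend each child down to a fresh $c$-good leaf. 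The resulting perfect tree $q$ forces $\check{U}\subseteq\dot A$ (if $c=1$) or $\check{U}\cap\dot A=\emptyset$ (if $c=0$), where $U$ is the set of chosen levels. The one remaining difficulty is that $U$ is \emph{sparse}, so $U\in\LU$ is not automatic; here selectivity enters decisively: the chosen levels are constrained at stage $k$ to a decreasing sequence $W_0\supseteq W_1\supseteq\cdots$ of $\LU$-sets, and the characterization of selective ultrafilters as those admitting a diagonal set $\{u_0<u_1<\cdots\}\in\LU$ with $u_k\in W_k$ lets me schedule the levels so that $U\in\LU$. (A mere P-point would furnish only a pseudo-intersection, which need not contain the sparse $U$ we build.)

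The selectivity density statement is proved by the same template applied to the labeling $\psi\colon 2^{<\omega}\to\omega$ read off from $\dot f$, now invoking the \emph{Ramsey} facet of selectivity: a coloring recording how the values of $\psi$ along different branches compare is monochromatized on a set in $\LU$, and the fusion then builds a perfect $q$ and $U\in\LU$ forcing $\dot f\restr\check{U}$ to be injective or constant uniformly over $[q]$. Finally, density of both families of conditions, evaluated at $g$, shows that in $\V[g]$ every subset of $\omega$ contains a ground-model $\LU$-set in it or its complement, and every function is constant or injective on a ground-model $\LU$-set; thus $\LU$ generates a selective ultrafilter. The argument uses exactly both halves of selectivity—the P-point-style diagonalization to place the sparse $U$ in $\LU$, and the Ramsey/Q-point facet to uniformize values—which is why selectivity, rather than mere ultrafilter-hood, is the hypothesis.
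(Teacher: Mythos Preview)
The paper does not give its own proof of this result; it is quoted, with attribution to Baumgartner--Laver, purely as background for the block-sequence analogue Theorem~\ref{thm:Sacks_pres}. So there is nothing in the paper to compare your argument against directly. (The paper's proof of Theorem~\ref{thm:Sacks_pres} proceeds quite differently, via Lemma~\ref{lem:clopen_perf_U_strat} and an absoluteness argument, rather than by a bare fusion-plus-diagonalization.)

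On its own merits, your outline for the ultrafilter half is the classical one and is nearly right, with one repairable slip. You write that ``the chosen levels are constrained at stage $k$ to a decreasing sequence $W_0\supseteq W_1\supseteq\cdots$ of $\LU$-sets,'' but as you have set things up the set $W_k=\bigcap_{v\in L_k}D^c_v$ depends on the leaves $L_k$ produced so far, hence on the very levels $u_0,\dots,u_{k-1}$ you are trying to schedule; you cannot hand a fixed sequence to the diagonalization. The fix is to note that $D^c_w\subseteq D^c_v$ whenever $w\supseteq v$, so ``not $c$-good'' is closed under extension; it follows that there is always some $c$ and a node $v_0$ below which \emph{every} node is $c$-good, and one may pre-compute $W_n=\bigcap\{D^c_v: v\supseteq v_0,\ |v|=n\}$ before the fusion begins. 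Selectivity in the form $u_{k+1}\in W_{u_k}$ then applies cleanly.

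The selectivity half, however, is a genuine gap as written. The ``coloring recording how the values of $\psi$ along different branches compare'' is not a coloring of $[\omega]^2$: whether $\psi(b\restr n)=\psi(b\restr m)$ depends on the branch $b$, so the Ramsey property of $\LU$ does not apply directly. If instead you color $\{n,m\}$ by whether \emph{all} comparable node-pairs at those levels agree, homogeneity in one color yields constancy, but the other color yields only ``some pair disagrees,'' which says nothing about injectivity along an arbitrary branch. Some further mechanism---either a second uniformization across branches, a reduction to a ground-model function on a $\LU$-set, or an appeal to a Halpern--L\"auchli-type level-set argument---is needed before selectivity of $\LU$ can be brought to bear; your sketch does not supply it.
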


Similar parametrized Ramsey theorems have been established in other settings, see \cite{MR2285192}, \cite{MR2875907}, \cite{MR2603812}, and \cite{yyz_thesis}. Particularly relevant to the present work are results of Zheng \cite{MR3717938} on parametrized forms of Milliken's theorem \cite{MR0373906} for the space of infinite block sequences in $\mathrm{FIN}$, and Kawach \cite{MR4247793} on parametrized forms of dichotomies (originally due to Gowers \cite{MR1164759}) for the spaces $\mathrm{FIN}_k$, $\mathrm{FIN}_{\pm k}$, and the Banach space $c_0$. Calder\'on, Di Prisco, and Mijares \cite{MR4391477} have also recently developed local forms of the parametrized results for $\mathrm{FIN}_k$.

The main result of the present article is the following parametrized version of Theorem \ref{thm:local_Rosendal}:

\begin{thm}\label{thm:local_para_Rosendal}
	Let $\mathcal{H}$ be a strategic $(p^+)$-family on $E$. If $\mathbb{A}\subseteq \mathbb{R}\times{E^{[\infty]}}$ is analytic, then there is a nonempty perfect set $P\subseteq\mathbb{R}$ and an $X\in\mathcal{H}$ such that either:
	\begin{enumerate}[(1)]
		\item I has a strategy $\sigma$ in $F[X]$ such that $P\times[\sigma]\subseteq\mathbb{A}^c$, or
		\item for every $t\in P$, II has a strategy in $G[X]$ for playing into $\mathbb{A}_t$.
	\end{enumerate}
\end{thm}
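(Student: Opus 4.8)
The plan is to prove Theorem \ref{thm:local_para_Rosendal} by merging the combinatorial fusion argument underlying the unparametrized localized dichotomy (Theorem \ref{thm:local_Rosendal}) with a Sacks-style perfect-set construction on the parameter coordinate, in the spirit of the proof of Theorem \ref{thm:para_Silver} and Pawlikowski's combinatorial refinement of it. In effect, I would run the two ingredients of Theorems \ref{thm:para_Rosendal} and \ref{thm:local_Rosendal} at once: a perfect ($\mathbb{S}$-style) fusion building $P$, interleaved with a $\preceq^*$-fusion inside $\LH$ building $X$. As a preliminary step I would reduce to a closed $\A$ by the standard unfolding device, writing the analytic set as $\A=\{(t,Y):\exists z\in\omega^\omega\ (t,Y,z)\in C\}$ for closed $C$ and carrying the witness $z$ as an extra coordinate on the perfect-tree conditions; this converts the problem to a closed one at the cost of working with perfect trees on $2^{<\omega}\times\omega^{<\omega}$, with the eventual $P$ recovered as the projection of the surviving perfect set of pairs.

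Next I would set up a \emph{parametrized} combinatorial forcing whose conditions couple a perfect (Sacks) tree $T\subseteq 2^{<\omega}$ governing the parameter with a finite block sequence $s$ together with a block sequence $X\in\LH$ extending it, exactly as in the local theorem. For a node $u\in T$ and a position $s$ I would define the relations ``$(T,s,X)$ \emph{accepts} above $u$'' (II can force the outcome into $\A_t$ for every $t$ in the clopen piece of $[T]$ below $u$) and ``\emph{rejects} above $u$'', and then establish the parametrized analogues of the usual dichotomy, monotonicity, and amalgamation lemmas, now indexed by the splitting nodes of $T$. The heart of the argument is a simultaneous fusion handling one requirement at a time: at each stage I apply the parametrized dichotomy at the current splitting nodes, and either keep splitting $T$ along a perfect subtree while recording how I avoids $\A_t$ uniformly on that piece (rejection), or record, fiber by fiber, II's forcing move (acceptance), meanwhile shrinking within $\LH$ and using the $(p^+)$ property to guarantee that the diagonal limit $X$ again lies in $\LH$. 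Maintaining perfectness of $T$ under the rejection branches feeds alternative (i); acceptance surviving on a perfect set feeds alternative (ii).

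Finally I would assemble the outcomes. In the acceptance case one simply reads off, for each $t\in P=[T]$, a strategy for II in $G[X]$ playing into $\A_t$; since this strategy is permitted to depend on $t$, no uniformization is needed. In the rejection case one must amalgamate the locally obtained avoidance data recorded along the branches into a \emph{single} strategy $\sigma$ for I in $F[X]$ with $P\times[\sigma]\subseteq\A^c$.

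I expect the principal obstacle to be precisely this uniformization across the perfect parameter set in alternative (i): one strategy $\sigma$ for I must simultaneously avoid $\A_t$ for \emph{all} $t\in P$, whereas the local dichotomy supplies only a strategy for each fixed position and fiber. This is exactly where the \emph{strategic} hypothesis on $\LH$ (stronger than the bare $(p^+)$ that sufficed in Theorem \ref{thm:local_Rosendal}) is essential: it guarantees that $\LH$ is closed under the strategy-combining operations needed to fuse the continuum-many local strategies obtained along the rejection branches into one global strategy lying in the family, and dually that the acceptance data can be organized coherently across the splitting nodes of $T$. The delicate bookkeeping is to keep $T$ perfect while respecting this strategic amalgamation, and to interleave the fusion on $\R$ with the fusion inside $\LH$ so that neither destroys the gains of the other; once this is arranged, the two alternatives fall out of whether the rejection branches or the acceptance branches survive to yield a perfect set.
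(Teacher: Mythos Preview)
Your proposal takes a genuinely different route from the paper. The paper does \emph{not} carry out a direct Ellentuck--Pawlikowski style combinatorial fusion. Instead, it proceeds metamathematically: first reduce from a strategic $(p^+)$-family to a strategic $(p^+)$-filter $\LU$ (forcing with $\LH$ adds such a filter without adding reals); then prove the clopen case via Shoenfield absoluteness (assume $\MA(\aleph_1)$, build an $\aleph_1$-chain, bound it); localize the clopen case to $\LU$ using that $\LU$ meets analytic $\preceq$-dense open sets; establish that Sacks forcing preserves $(p^+)$-filters generated by $\LU$; use this preservation inside a countable elementary submodel to get the Borel case; and only then run a combinatorial accepts/rejects fusion (exactly along the lines of your Section~7 description) to climb from Borel to analytic. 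In fact, the paper explicitly poses your direct combinatorial approach as an open problem, so as written your proposal is a plan rather than a proof.

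There are two concrete gaps. First, your fusion relies at each stage on already having a ``parametrized dichotomy at the current splitting nodes'', but you never say how that base case is obtained; in the paper the analogous step is exactly where the metamathematics enters, and the author could not replace it by a purely combinatorial argument. Second, you misidentify the role of the strategic hypothesis. The amalgamation of I's strategies into a single $\sigma$ with $P\times[\sigma]\subseteq\A^c$ is handled by $\mathbf{\Pi}^1_1$-uniformization and compactness (Lemma~\ref{lem:strat_unif}) and does not use that $\LH$ is strategic at all. The strategic property is used precisely once, to guarantee that $\LU$ meets a certain \emph{analytic} $\preceq$-dense open set of block sequences (those $Y$ for which one of two Borel slices in $2^\omega$ is uncountable); a mere $(p^+)$-filter is only known to meet coanalytic such sets. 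Your explanation that strategic is needed to ``fuse continuum-many local strategies'' or to keep $\LH$ closed under strategy-combining operations does not match how the hypothesis actually functions.
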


Here, $\mathbb{A}_t=\{X\in{E^{[\infty]}}:(t,X)\in\mathbb{A}\}$ is the \emph{$t$-slice of $\mathbb{A}$} for $t\in\mathbb{R}$, and $[\sigma]$ the set of all outcomes of $F[X]$ wherein player I follows $\sigma$. We note that even in the non-localized case, where $\mathcal{H}=E^{[\infty]}$ and Theorem \ref{thm:local_para_Rosendal} becomes a parametrized form of Theorem \ref{thm:Rosendal}, our result is new. A major component of our proof will be the following analogue of Theorem \ref{thm:Baum_Laver}:

\begin{thm}\label{thm:Sacks_pres}
	If $\mathcal{F}$ is a strategic $(p^+)$-filter on $E$ and $g$ is $\mathbf{V}$-generic for $\mathbb{S}$, then $\mathcal{F}$ generates a strong $(p^+)$-filter in $\mathbf{V}[g]$
\end{thm}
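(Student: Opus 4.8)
The plan is to follow the template of the Baumgartner--Laver argument (Theorem \ref{thm:Baum_Laver}), replacing the selectivity of $\LU$ by its $(p^+)$-property and combining it with a fusion argument on Sacks conditions. Write $\LU^*$ for the filter generated by $\LU$ in $\V[g]$, i.e. the set of all block sequences $Z\in(E^{[\infty]})^{\V[g]}$ such that $X\preceq^* Z$ for some $X\in\LU$. That $\LU^*$ is a proper filter is immediate and requires no genericity: if $Z_0,Z_1\in\LU^*$ are witnessed by $X_0,X_1\in\LU$, then any common lower bound $X\in\LU$ of $X_0,X_1$ (which exists since $\LU$ is directed in $\V$) satisfies $X\preceq^* X_i\preceq^* Z_i$, so $X$ witnesses a common lower bound of $Z_0,Z_1$ in $\LU^*$. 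The content of the theorem is therefore the $(p^+)$-property, i.e. that every $\preceq^*$-decreasing sequence from $\LU^*$ admits a pseudo-intersection in $\LU^*$. Passing to names and working below an arbitrary condition, it suffices to prove: given $p\in\mathbb{S}$ and a name $(\dot Z_n)_n$ with $p\Vdash$ ``$(\dot Z_n)$ is $\preceq^*$-decreasing and each $\dot Z_n\in\LU^*$'', there exist $q\le p$ and a single $X\in\LU$ with $q\Vdash X\preceq^*\dot Z_n$ for all $n$; such an $X$ then lies in $\LU^*$ and is the desired pseudo-intersection.

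To produce $q$ together with the ground-model witnesses I would run a fusion $p=p_0\ge_0 p_1\ge_1 p_2\ge_2\cdots$ in $\mathbb{S}$, where $\ge_m$ is the usual refinement fixing the first $m$ splitting levels. At stage $m$ I handle the finitely many level-$m$ splitting nodes $s$ and the indices $n\le m$ simultaneously: since $p\Vdash\dot Z_n\in\LU^*$ unfolds to the ground-model-parametrized statement $p\Vdash\exists X\in\check{\LU}\,(X\preceq^*\dot Z_n)$, for each such $s$ and $n$ I may refine the subcondition of $p_m$ above $s$ to some $r_{s,n}$ and choose $X_{s,n}\in\LU$ with $r_{s,n}\Vdash X_{s,n}\preceq^*\dot Z_n$. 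Amalgamating these finitely many refinements over the pairwise incompatible nodes $s$ yields $p_{m+1}\le_m p_m$, and I let $q=\bigcap_m p_m\in\mathbb{S}$ be the fusion limit. The bookkeeping is arranged so that for each node $s$ and index $n$ the forced relation $X_{s,n}\preceq^*\dot Z_n$ is inherited by every condition below $q$ passing through $s$.

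It remains to diagonalize the countable family $\{X_{s,n}\}\subseteq\LU$ in the ground model. Enumerating it as $\langle W_i\rangle_i$ and recursively choosing $W_i'\in\LU$ to be a common lower bound of $W_i$ and $W_{i-1}'$ (available since $\LU$ is directed) gives a $\preceq^*$-decreasing sequence in $\LU$ with $W_i'\preceq^* W_j$ for all $j\le i$; the $(p^+)$-property of $\LU$ in $\V$ then furnishes $X\in\LU$ with $X\preceq^* W_i'$, hence $X\preceq^* X_{s,n}$, for all $i,s,n$. This last step is where the strategic hypothesis is meant to enter: I would run the strategy witnessing that $\LU$ is a strategic $(p^+)$-filter against the enumerated demands, so that the diagonalization producing $X$ is carried out coherently with, and can be made to respond to, the fusion. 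Finally, fix any $n$. Given any $q'\le q$, extend it to pass through some level-$n$ splitting node $s$ of $q$; below $s$ the relation $X_{s,n}\preceq^*\dot Z_n$ is forced, while $X\preceq^* X_{s,n}$ holds in $\V$, so by transitivity of $\preceq^*$ this extension forces $X\preceq^*\dot Z_n$. As $q'$ was arbitrary, $q\Vdash X\preceq^*\dot Z_n$, as required.

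The main obstacle is the fusion bookkeeping of the second paragraph: one must ensure that the node-by-node refinements at each stage can be chosen so that the single limit $q$ simultaneously preserves all of the forced relations $X_{s,n}\preceq^*\dot Z_n$, despite the ``$\preceq^*$''-witnesses (the tails realizing the span inclusions) varying from node to node and from $n$ to $n$. A secondary subtlety is matching the precise form of the $(p^+)$-property used in the paper: if it is phrased as a diagonalization against dense subsets of $(\LU,\preceq)$, or against the finite block sequences in $E^{[<\infty]}$, rather than as a bare pseudo-intersection property, then the fusion must additionally record the relevant finite approximations of the $\dot Z_n$, and the strategic strategy must be fed these as well. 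Finally, I would note that only the $(p^+)$-property, and not the strategic property, is expected to survive: Sacks forcing adjoins reals outside $\L(\R)^{\V}$, which can code dense sets witnessing the failure of genericity over $\L(\R)$, so no stronger conclusion is available.
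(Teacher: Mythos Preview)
Your proposal has a fundamental gap: you have confused the $(p^+)$-property with the $(p)$-property. In this paper, a $(p^+)$-family is by definition a \emph{full} $(p)$-family (Section~\ref{sec:families}), and fullness is an additional combinatorial condition: whenever $D\subseteq E$ is $\bar{\LU}$-dense, there must be a $Z\in\bar{\LU}$ with $\langle Z\rangle\subseteq D$. Your argument establishes only that the $(p)$-property (existence of diagonalizations for $\preceq^*$-decreasing sequences) is preserved. You flag ``matching the precise form of the $(p^+)$-property'' as a secondary subtlety; in fact preserving fullness is the entire substance of the theorem, and your fusion-plus-diagonalization outline does not touch it.

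The paper's proof reflects this asymmetry. Preservation of the (strong) $(p)$-property is dispatched quickly via the Sacks property (Lemma~\ref{lem:Sacks_prop}): each $\dot{Y}_{\vec{x}}$ is trapped in a ground-model finite set $F_{\vec{x}}\subseteq\LU$, one takes a common lower bound $Z_{\vec{x}}\in\LU$ of $F_{\vec{x}}$, and then diagonalizes in $\V$. No strategic hypothesis is needed here, so your attempt to invoke it at this stage is misplaced. The real work is fullness: given a name $\dot{D}$ for a $\bar{\LU}$-dense set, the paper uses a fusion to reduce $\dot{D}$ to a clopen set $\A\subseteq[p_\infty]\times E^{[\infty]}$ and then applies Lemma~\ref{lem:clopen_perf_U_strat} (clopen sets are perfectly $\LU$-strategically Ramsey). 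That lemma is exactly where ``strategic'' enters, via Lemma~\ref{lem:analytic_gen}, since the relevant dense-open set of witnesses is only analytic, not Borel. A delicate argument with the finitized Gowers game then rules out alternative~(ii) and shows $q\Vdash\langle X/\sigma(\emptyset)\rangle\subseteq\dot{D}$. None of this machinery appears in your proposal, and it cannot be absorbed into the bookkeeping you describe.
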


We note some anomalies in our results: first, the conclusions for players I and II in Theorem \ref{thm:local_para_Rosendal} are asymmetric --- we will see in Section \ref{sec:perf_strat} that this is necessary and these conclusions are appropriately sharp. 
Second, in Theorem \ref{thm:local_para_Rosendal}, we make the additional assumption, not present in Theorem \ref{thm:local_Rosendal}, that $\mathcal{H}$ is strategic, and in Theorem \ref{thm:Sacks_pres} we do not assert that this property is preserved, only that the resulting filter in $\mathbf{V}[g]$ has the apparently weaker property of being a \emph{strong} $(p^+)$-filter (defined in Section \ref{sec:local_1d}). We do not know whether the assumption of ``strategic'' here can be weakened to ``strong'' in $\mathsf{ZFC}$. However, thanks to the anonymous referee, we show that ``strong $(p^+)$'' and ``strategic $(p^+)$'' coincide under modest large cardinal assumptions.

The rest of this article is organized as follows: Section \ref{sec:1d} begins by showing that the weak form of the pigeonhole principle in $(\dagger)$ can be parametrized by perfect sets (Theorem \ref{thm:para_1d}). We then apply this to obtain dichotomies for parametrized families of linear transformations on $E$ (Theorems \ref{thm:para_trans} and \ref{thm:para_trans_2}). Section \ref{sec:1d} is entirely self-contained and uses only basic set theory. We review the relevant definitions for families of block sequences from \cite{MR3864398} in Section \ref{sec:families}. Sacks forcing and the fusion method are reviewed in Section \ref{sec:Sacks}. In Section \ref{sec:local_1d}, we prove a local form (Theorem \ref{thm:local_para_1d}) of the parametrized weak pigeonhole principle from Section \ref{sec:1d} and then use it to establish our main preservation result, Theorem \ref{thm:Sacks_pres}. In Section \ref{sec:perf_strat}, we define perfectly strategically Ramsey sets and prove Theorem \ref{thm:local_para_Rosendal} for open sets (Lemma \ref{lem:local_para_Rosendal_open}). This is extended to all analytic sets in Section \ref{sec:analytic}, completing the proof of Theorem \ref{thm:local_para_Rosendal}. The relationship between the strategic and strong properties under large cardinal assumptions (Theorem \ref{thm:strong_strat}), and their preservation (Corollary \ref{cor:strat_pres}), are addressed in Section \ref{sec:strat_pres}. We conclude in Section \ref{sec:end} with a discussion of possible improvements of our results and methods.

Throughout, we will make use of standard descriptive set-theoretic facts about Borel and analytic subsets of Polish spaces, all of which can be found in \cite{MR1321597}. We will also use the method of forcing and absoluteness, for which standard references are \cite{MR1940513} and \cite{MR597342}. While these metamathematical techniques are employed in the proof of Theorem \ref{thm:local_para_Rosendal}, we stress that this result is, ultimately, established in $\mathsf{ZFC}$. A familiarity with infinite-dimensional Ramsey theory, as in \cite{MR2603812}, while helpful, is not necessary. Applications of our results to Banach spaces, including a parametrized form of Gowers' dichotomy, will be in a separate, forthcoming article \cite{Smythe:ParaRamseyBlockII}.

\section*{Acknowledgements}

The impetus for this article arose from conversations with Justin Moore and Stevo Todor\v{c}evi\'{c} dating back to 2016, about the author's thesis work \cite{Smythe_thesis}, later published in \cite{MR3864398}. Several email exchanges with Christian Rosendal and a visit to his Chicago office on a chilly February day in 2019 were invaluable. Various chats, correspondences, and MathOverflow exchanges with others over the intervening years, including Andreas Blass, Fran\c{c}ois Dorais, Edna Jones, Arno Pauly, No\'e de Rancourt, Richard Shore, and Yuan Yuan Zheng, have also contributed to this work. Finally, we must thank the anonymous referee for their comments and careful reading of this article, in particular for discovering an error and providing the material for Section \ref{sec:strat_pres}.

\section{Partitions and linear transformations}\label{sec:1d}

We begin by considering families of partitions of the underlying vector space and establish a parametrized form (Theorem \ref{thm:para_1d}) of the ``weak pigeonhole principle'' in ($\dagger$). We then apply this to parametrized families of linear transformations (Theorems \ref{thm:para_trans} and \ref{thm:para_trans_2}). In all of these results, the existence of a subspace satisfying the conclusion for a \emph{single} partition or linear transformation, respectively, is obvious. The novelty here lies in the fact that we obtain the same conclusion for perfectly many partitions or transformations simultaneously, all witnessed by the same subspace. We hope that these preliminary results suggest possible future applications, in addition to being interesting in their own right.

We say that a family $\{A_t:t\in\mathbb{R}\}$ of subsets of $E^\times$ is \emph{Borel} if the function $t\mapsto A_t$ is Borel measurable $\mathbb{R}\to 2^{E^\times}$, or equivalently, it is Borel when viewed as a subset of $\mathbb{R}\times 2^{E^\times}$.

\begin{thm}\label{thm:para_1d}
	For every Borel family $\{A_t:t\in\mathbb{R}\}$ of subsets of $E^\times$, there is a nonempty perfect set
	$P\subseteq\mathbb{R}$ and an infinite-dimensional subspace $V$ of $E$ such that either:
	\begin{enumerate}[(1)]
		\item for all $t\in P$, $A_t\cap V=\emptyset$, or
		\item for all $t\in P$, $A_t$ is asymptotic below $V$.
	\end{enumerate}
\end{thm}

Here, a set $A\subseteq E^\times$ is \emph{asymptotic below} a subspace $V$ (or block sequence $X$, respectively) if $A$ meets every infinite-dimensional subspace of $V$ (or $\langle X\rangle$). We proceed with two general lemmas, the first of which is standard, while the second was suggested by the anonymous referee.

\begin{lemma}\label{lem:omega_1_dense}
	Suppose that $(t_\alpha)_{\alpha<\omega_1}$ is an $\omega_1$-sequence of distinct elements of $\mathbb{R}$. Then, there is an $\alpha_0<\omega_1$ such that there are uncountably many $\beta,\gamma<\omega_1$ for which $t_\beta<t_{\alpha_0}<t_\gamma$.
\end{lemma}

\begin{proof}
	First, we claim that there is an $A\in[\omega_1]^{\omega_1}$ such that either for all $\alpha\in A$, there are uncountably many $\gamma\in A$ for which $t_\alpha<t_\gamma$, or for all $\alpha\in A$, there are uncountably many $\gamma\in A$ for which $t_\gamma<t_\alpha$. Suppose no $A\in[\omega_1]^{\omega_1}$ satisfies the latter. In particular, there is an $\alpha<\omega_1$ such that there are only countably many $\gamma<\omega_1$ for which $t_\gamma\leq t_{\alpha}$. Likewise, there is a $\beta$ in the interval $(\alpha,\omega_1)$ satisfying the same property. We can continue in this fashion to build an unbounded set $B\in[\omega_1]^{\omega_1}$ such that for all $\beta\in B$, there are only countably many $\gamma<\omega_1$ for which $t_\gamma\leq t_\beta$, and consequently, uncountably many $\gamma\in B$ for which $t_\beta<t_\gamma$, proving the claim.

	Thus, we may thin down our original sequence and assume that for all $\alpha<\omega_1$, there are uncountably many $\gamma<\omega_1$ for which $t_\alpha<t_\gamma$ (we can modify the argument accordingly for the other case). Now, if for some $\alpha<\omega_1$, there are uncountably many $\beta<\omega_1$ for which $t_\beta<t_\alpha$, we are done, so we suppose that for all $\alpha<\omega_1$, there are only countably many such $\beta$. Then, we may construct a strictly increasing subsequence of the $t_\alpha$'s in order-type $\omega_1$, using that at limit stages, there are only countably many $t_\alpha$'s below any of the countably many terms thus constructed. This, however, contradicts the well-known fact that $\omega_1$ does not order-embed into $\mathbb{R}$.
\end{proof}

\begin{lemma}\label{lem:ref}
	Suppose that $H_0,\ldots,H_n$ are uncountable pairwise disjoint sets, $H=\bigcup_{i\leq n} H_i$, and $Y_\alpha\subseteq E$ for $\alpha\in H$ are such that for any finite $J\subseteq H$, $\bigcap_{\alpha\in J}Y_\alpha\neq\emptyset$. Then, there exists uncountable subsets $K_i\subseteq H_i$ for each $i\leq n$ such that $\bigcap_{\alpha\in K}Y_\alpha\neq\emptyset$, where $K=\bigcup_{i\leq n} K_i$.
\end{lemma}

\begin{proof}
	Let $H'\subseteq H_0\times\cdots\times H_n$ be an uncountable set of tuples such that the projection maps $H'\to H_i$, for $i\leq n$, are injective, i.e., each element of $H_i$ occurs in at most one element of $H'$. For each $(\alpha_0,\ldots,\alpha_n)\in H'$, choose some $x_{(\alpha_0,\ldots,\alpha_n)}\in Y_{\alpha_0}\cap\cdots\cap Y_{\alpha_n}\neq\emptyset$. Since $H'$ is uncountable, but $E$ is countable, there is an uncountable $K'\subseteq H'$ such that for all $(\alpha_0,\ldots,\alpha_n)\in K'$, the $x_{(\alpha_0,\ldots,\alpha_n)}$ coincide, say are equal to $x$. For each $i\leq n$, let $K_i\subseteq H_i$ be the projection of $K'$ onto the $i$th coordinate and $K=\bigcup_{i\leq n}K_i$. Then, by our choice of $H'$, each $K_i$ is uncountable and $x\in\bigcap_{\alpha\in K}Y_\alpha\neq\emptyset$.
\end{proof}

The next crucial lemma is based on Lemma 3.1.1 in \cite{MR891249}.

\begin{lemma}\label{lem:vEMS}
	Suppose that $(X_\alpha)_{\alpha<\omega_1}$ and $(t_\alpha)_{\alpha<\omega_1}$ are $\omega_1$-sequences in $E^{[\infty]}$ and $\mathbb{R}$, respectively, such that for all $\alpha<\beta<\omega_1$, $X_\beta\preceq^* X_\alpha$ and $t_\alpha\neq t_\beta$. Then, there is a countable subset $\Sigma$ of $\omega_1$ and a $Z\in E^{[\infty]}$ such that:
		\begin{enumerate}[(i)]
			\item $(\{t_\alpha:\alpha\in\Sigma\},<)$ is order isomorphic to $(\mathbb{Q},<)$, and
			\item $Z\preceq X_\alpha$ for all $\alpha\in\Sigma$.
		\end{enumerate}
\end{lemma}

\begin{proof}
	Enumerate $\mathbb{Q}$ as $\{r_n:n\in\omega\}$. We will construct $\alpha_n\in\omega_1$, $A_n\in[\omega_1]^{\omega_1}$, and $z_n\in E^\times$, for $n\in\omega$, so that the mapping $r_n\mapsto t_{\alpha_n}$ is an order isomorphism, each interval in $\mathbb{R}$ determined by $t_{\alpha_i}$, for $i\leq n$, contains uncountably many $t_\alpha$ for $\alpha\in A_n$, and the $z_n$'s form a block sequence.
	
	We start by choosing $\alpha_0\in\omega_1$ as in Lemma \ref{lem:omega_1_dense} so that $H_0=\{\beta\in\omega_1:t_\beta<t_{\alpha_0}\}$ and $H_1=\{\gamma\in\omega_1:t_{\alpha_0}<t_\gamma\}$ are uncountable disjoint sets. Put $H=H_0\cup H_1$ and for each $\alpha\in H$, let $Y_\alpha=\langle X_\alpha\rangle\cap\langle X_{\alpha_0}\rangle\setminus\{0\}$. By Lemma \ref{lem:ref}, there are uncountable sets $K_0\subseteq H_0$ and $K_1\subseteq H_1$ for which $\bigcap_{\gamma\in K_0\cup K_1}Y_\gamma\neq\emptyset$. Put $A_0=K_0\cup K_1$ and choose $z_0\in\bigcap_{\gamma\in A_0}Y_\gamma$.
	
	Suppose we have defined $\alpha_n$, $A_n$, and $z_n$ as described. Choose an $\alpha_{n+1}>\alpha_n$ in $A_n$ so that the mapping $r_i\mapsto t_{\alpha_i}$, for $i\leq n+1$, is an order isomorphism, and each interval determined by the $t_{\alpha_i}$, for $i\leq n+1$, still contains uncountably many $t_\alpha$'s for $\alpha\in A_n$. This is again possible by applying Lemma \ref{lem:omega_1_dense} to the relevant intervals. For each of the now $n+2$ intervals determined by the $t_{\alpha_i}$, for $i\leq n+1$, let $H_j$ be the corresponding set of $\alpha\in A_n$ for which $t_\alpha$ lies in that interval. Put $H=\bigcup_{i\leq n+1}H_i$ and $Y_\alpha=(\langle X_\alpha/z_n\rangle\cap\bigcap_{i\leq n+1}\langle X_{\alpha_i}\rangle)\setminus\{0\}$ for each $\alpha\in H$. Applying Lemma \ref{lem:ref}, there are uncountable sets $K_j\subseteq H_j$, for $j\leq n+1$, and a nonzero vector $z_{n+1}\in\bigcap_{\gamma\in A_{n+1}}Y_\gamma$, where $A_{n+1}=\bigcup_{j\leq n+1} K_j$. This describes the construction.
	
	Let $\Sigma=\{\alpha_n:n\in\omega\}$ and $Z=(z_n)_{n\in\omega}$. Property (i) has clearly been ensured by construction. For property (ii), observe that for each $n\in\omega$, $z_n$ was chosen explicitly to be in $\bigcap_{i\leq n}\langle X_{\alpha_n}\rangle$, while the choice of $A_{n}$ and the fact that $\Sigma\setminus\{\alpha_0,\ldots,\alpha_n\}\subseteq A_{n}$ ensures that $z_n\in\langle X_\alpha\rangle$ for all $\alpha\in\Sigma\setminus\{\alpha_0,\ldots,\alpha_n\}$, proving the claim.
\end{proof}

\begin{proof}[Proof of Theorem \ref{thm:para_1d}]
	We may first shrink the set of indices of the Borel family $\{A_t:t\in\mathbb{R}\}$ to a nonempty perfect set $Q\subseteq\mathbb{R}$ on which the mapping $t\mapsto A_t$ is continuous. Suppose that there is no nonempty perfect set $P\subseteq Q$ and no $X\in E^{[\infty]}$ such that (2) holds for
	$P$ and $V=\langle X\rangle$. We will construct witnesses to (1). By our assumptions, the set
	\[
		C_0=\{t\in Q:A_t \text{ is asymptotic (for $E$)}\}
	\]
	contains no nonempty perfect subset.\footnote{It is tempting to use the perfect set property to conclude that $C_0$ is countable. However, this set is coanalytic, and the perfect set property is not provable for coanalytic sets in $\mathsf{ZFC}$. We do not know if $C_0$ is necessarily of lower complexity, i.e., whether it is Borel.} Consequently, its complement in $Q$ is uncountable, as the complement of any countable set must contain a perfect set, and in particular, nonempty. So, we may choose a $t_0\in Q\setminus C_0$ and an $X_0\in E^{[\infty]}$ such that $A_{t_0}\cap\langle X_0\rangle=\emptyset$. We will continue to construct $t_\alpha\in Q$ and $X_\alpha\in E^{[\infty]}$, for $\alpha<\omega_1$, such that all of the $t_\alpha$'s are distinct, $X_\beta\preceq^* X_\alpha$ whenever $\beta<\alpha$, and $A_{t_\alpha}\cap\langle X_\alpha\rangle=\emptyset$.
	
	Suppose that, for some $\gamma<\omega_1$, we have constructed $t_\alpha$ and $X_\alpha$ as described for all $\alpha<\gamma$. If $\gamma=\beta+1$, then consider the set
	\[
		C_\gamma=\{t\in Q:A_t\text{ is asymptotic for $X_\beta$}\}.
	\]
	By assumption, this set contains no nonempty perfect subset, so its complement in $Q$ is uncountable, and we may choose a $t_\gamma\in Q\setminus C_\gamma$ distinct from $t_\alpha$, for all $\alpha<\gamma$, and an $X_\gamma\preceq X_\beta$ such that $A_{t_\gamma}\cap \langle X_\gamma\rangle=\emptyset$. If instead, $\gamma$ is a limit ordinal, first diagonalize to obtain an $X\in E^{[\infty]}$ such that $X\preceq^* X_\alpha$ for all $\alpha<\gamma$. Then, argue as in the successor case to obtain a $t_\gamma$ distinct from $t_\alpha$, for all $\alpha<\gamma$, and an $X_\gamma\preceq X$ such that $A_{t_\gamma}\cap \langle X_\gamma\rangle=\emptyset$.	
	
	Let $\Sigma$ and $Z$ be as in Lemma \ref{lem:vEMS}, as applied to the sequences $(X_\alpha)_{\alpha<\omega_1}$ and $(t_\alpha)_{\alpha<\omega_1}$ constructed above. Let $P$ be a nonempty perfect subset of $\overline{\{t_\alpha:\alpha\in\Sigma\}}$ and $V=\langle Z\rangle$. 
 
	Take $t\in P$, say with $\lim_n t_{\alpha_n}= t$, where each $\alpha_n$ is in $\Sigma$. Towards a contradiction, suppose that $v\in A_t\cap V$, with $v\neq 0$. By continuity of the mapping $t\mapsto A_t$ on $Q$, for sufficiently large $n$, we have that $v\in A_{t_{\alpha_n}}$, but $Z\preceq X_{\alpha_n}$, so $v\in A_{t_{\alpha_n}}\cap\langle X_{\alpha_n}\rangle$, and thus $A_{t_{\alpha_n}}\cap\langle X_{\alpha_n}\rangle\neq\emptyset$, contrary to how $t_\alpha$ and $X_\alpha$ were chosen above. Thus, $A_t\cap V=\emptyset$ for all $t\in P$, verifying (1).
\end{proof}

Let $L(E)$ denote the set of all linear transformations from $E$ to itself. $L(E)$ inherits the structure of an uncountable Polish space when viewed as a subspace of $E^E$, or equivalently, when endowed with the topology of pointwise convergence. A family $\{T_t:t\in\mathbb{R}\}$ of transformations in $L(E)$ is again \emph{Borel} if the mapping $t\mapsto T_t$ is Borel measurable.

\begin{thm}\label{thm:para_trans}
	For every Borel family $\{T_t:t\in\mathbb{R}\}$ of linear transformations on $E$, there is a nonempty perfect set $P\subseteq \mathbb{R}$ and an infinite-dimensional subspace $V$ of $E$ such that either:
	\begin{enumerate}[(1)]
		\item for all $t\in P$, $V\subseteq\ker(T_t)$, or
		\item for all $t\in P$, $T_t\upharpoonright V$ is injective.
	\end{enumerate}
\end{thm}

\begin{proof}
	For each $t\in\mathbb{R}$, let
	\[
		A_t=\{v\in E^\times:T_t(v)\neq 0\}.
	\]
	Then, $\{A_t:t\in\mathbb{R}\}$ is a Borel family of subsets of $E^\times$. By Theorem \ref{thm:para_1d}, there is a nonempty perfect set $P\subseteq\mathbb{R}$ and an infinite-dimensional subspace $V$ of $E$ such that either (1$'$) for all $t\in P$, $A_t\cap V=\emptyset$, or (2$'$) for all $t\in P$, $A_t$ is asymptotic below $V$. If (1$'$) holds, then for all $t\in P$ and $v\in V$, $T_t(v)=0$, which proves (1). So, we may suppose that (2$'$) holds.
	
	For $t\in P$, if $T_t\upharpoonright V$ has infinite-dimensional kernel, then $A_t\cap\ker(T_t\upharpoonright V)=\emptyset$, contradicting that $A_t$ is asymptotic below $V$. Thus, $T_t\upharpoonright V$ has finite-dimensional kernel for each $t\in P$. For each $n\in\omega$, let 
	\[
		P_n=\{t\in P:\ker(T_t\upharpoonright V)\subseteq\langle e_0,\ldots,e_n\rangle\}.
	\] 
	Each $P_n$ is Borel and $P=\bigcup_{n\in\omega} P_n$, so since $P$ is uncountable, there is some $n_0\in\omega$ for which $P_{n_0}$ is uncountable and thus contains a nonempty perfect subset, say $Q$. Let $W=V/n_0$. It follows that for each $t\in Q$, $\ker(T_t\upharpoonright W)=\{0\}$, and so $T_t\upharpoonright W$ is injective.
\end{proof}

One might ask if we can improve (2) in Theorem \ref{thm:para_trans} to get a perfect family of transformations and an infinite-dimensional subspace $V$ such that all of the transformations restrict to isomorphisms from $V$ onto itself. This is not possible, as the following example shows.

\begin{example}
	Let $\mathcal{A}$ be a nonempty perfect \emph{almost disjoint} collection of infinite block sequences in $E$, i.e., for any distinct $X,Y\in\mathcal{A}$, $\langle X\rangle \cap \langle Y\rangle$ is finite-dimensional. Such a family exists by Proposition 2.2 of \cite{MR4045990}. For each $X=(x_n)_{n\in\omega}\in \mathcal{A}$, let $T_X:E\to X$ be the isomorphism induced by mapping $e_n\mapsto x_n$. If $X,Y\in\mathcal{A}$ are distinct and $V$ is any infinite-dimensional subspace, then $T_X\upharpoonright V$ and $T_Y\upharpoonright V$ have almost disjoint ranges. In particular, there is no nontrivial subfamily of the $T_X$'s such that every $T_X\upharpoonright V$ maps onto $V$.
\end{example}

We also have the following complementary result to Theorem \ref{thm:para_trans}.

\begin{thm}\label{thm:para_trans_2}
	For every Borel family $\{T_t:t\in\mathbb{R}\}$ of linear transformations on $E$, there is a nonempty perfect set $P\subseteq \mathbb{R}$ and an infinite-dimensional subspace $V$ of $E$ such that either:
	\begin{enumerate}[(1)]
		\item for all $t\in P$, $V\subseteq\mathrm{ran}(T_t)$, or
		\item for all $t\in P$, $\mathrm{ran}(T_t)\cap V=\{0\}$.
	\end{enumerate}		
\end{thm}

\begin{proof}
	As in the proof of Theorem \ref{thm:para_trans}, we apply Theorem \ref{thm:para_1d} to the Borel family
	\[
		A_t=\{v\in E^\times:v\notin\mathrm{ran}(T_t)\},
	\]	
	for $t\in\mathbb{R}$, to obtain a nonempty perfect $P\subseteq\mathbb{R}$ and an infinite-dimensional subspace $V$ of $E$ such that either (1$'$) for all $t\in P$, $A_t\cap V=\emptyset$, or (2$'$) for all $t\in P$, $A_t$ is asymptotic below $V$. Clearly, (1$'$) implies (1). If (2$'$) holds, then it follows that $\mathrm{ran}(T_t)\cap V$ is finite-dimensional for all $t\in P$, so we can argue as in the proof of Theorem \ref{thm:para_trans} to shrink $P$ to a nonempty perfect subset $Q$, and $V$ to an infinite-dimensional subspace $W$, for which (2) holds.
\end{proof}

\section{Families of block sequences}\label{sec:families}

Theorem \ref{thm:local_para_Rosendal} is phrased in terms of families of block sequences satisfying certain combinatorial properties originally isolated by the author in \cite{MR3864398}. These properties, which we review below, are analogous to selectivity of coideals on $\omega$ (see \cite{MR1644345} or \cite{MR0491197}).

\begin{defn}
	\begin{enumerate}[(a)]
		\item We say that $\mathcal{H}\subseteq{E^{[\infty]}}$ is a \emph{family} on $E$ if it is nonempty and closed upwards with respect to $\preceq^*$.
		\item A family $\mathcal{F}$ is a \emph{filter} on $E$ if for all $X,Y\in\mathcal{F}$, there is a $Z\in\mathcal{F}$ such that $Z\preceq X$ and $Z\preceq Y$.
	\end{enumerate}
\end{defn}

Given a family $\mathcal{H}$ and an $X\in\mathcal{H}$, we let $\mathcal{H}\upharpoonright X=\{Y\in\mathcal{H}:Y\preceq X\}$.

\begin{defn}
	Suppose $\mathcal{H}$ is a family on $E$ and $X\in\mathcal{H}$. A set $D\subseteq E$ is \emph{$\mathcal{H}$-dense below $X$} if for all $Y\in\mathcal{H}\upharpoonright X$, there is a $Z\preceq Y$ such that $\langle Z\rangle\subseteq D$. Likewise, a set $\mathbb{D}\subseteq E^{[\infty]}$ is \emph{$\mathcal{H}$-dense below $X$} if for all $Y\in\mathcal{H}\upharpoonright X$, there is a $Z\preceq Y$ such that $Z\in\mathbb{D}$. By just \emph{$\mathcal{H}$-dense}, we mean $\mathcal{H}$-dense below $(e_n)_{n\in\omega}$.
\end{defn}

Note that if $\mathcal{F}$ is a filter on $E$, then a set is $\mathcal{F}$-dense if and only if it is $\mathcal{F}$-dense below some $X\in\mathcal{F}$. The next definition specifies a certain largeness condition for families, playing the role of ``coideal'' in this setting and, in particular, the role of ``ultra'' for filters.

\begin{defn}
	A family $\mathcal{H}$ on $E$ is \emph{full} if whenever $D\subseteq E$ is $\mathcal{H}$-dense below some $X\in\mathcal{H}$, there is a $Z\in\mathcal{H}\upharpoonright X$ such that $\langle Z\rangle\subseteq D$.
\end{defn}

While the preceding definition was used in \cite{MR3864398}, a more enlightening equivalent condition is that full families are exactly those which witness a form of the weak pigeonhole principle ($\dagger$) described in the Introduction.

\begin{lemma}[cf.~Proposition 3.3 in \cite{MR4516184}]\label{lem:full_pigeon}
	A family $\mathcal{H}$ on $E$ is full if and only if for every $A\subseteq E^\times$ and $X\in\mathcal{H}$, there is a $Y\in\mathcal{H}\upharpoonright X$ such that either $A\cap\langle Y\rangle=\emptyset$ or $A$ is asymptotic below $Y$.
\end{lemma}

\begin{proof}
	The proof follows immediately from that fact that if $A=E^\times\setminus D$, then $D$ is $\mathcal{H}$-dense below some $X\in\mathcal{H}$ if and only if $A$ is not asymptotic below any $Y\in\mathcal{H}\upharpoonright X$.	
\end{proof}

Note that in this lemma, we may only assert that $A$ is asymptotic below \emph{some} $Y\in\mathcal{H}\upharpoonright X$, rather than the original $X$. This is unavoidable, as $A$ may be disjoint from some subspace of $X$ which does not contain any block sequence in $\mathcal{H}$.  When $\mathcal{F}$ is a filter, the criterion for fullness can be stated more simply: $\mathcal{F}$ is full if and only if for every $A\subseteq E^\times$, there is a $Y\in\mathcal{F}$ such that either $A\cap\langle Y\rangle=\emptyset$ or $A$ is asymptotic below $Y$. 

\begin{defn}
	A family $\mathcal{H}$ on $E$ is:
	\begin{enumerate}[(a)]
		\item a \emph{$(p)$-family} if whenever $(X_n)_{n\in\omega}$ is a $\preceq$-decreasing sequence in $\mathcal{H}$, there is a diagonalization of $(X_n)_{n\in\omega}$ in $\mathcal{H}$.
		\item a \emph{$(p^+)$-family} if it is a full $(p)$-family.
		\item \emph{strategic} if whenever $\alpha$ is a strategy for II in $G[X]$, for some $X\in\mathcal{H}$, there is an outcome of $\alpha$ in $\mathcal{H}$.
	\end{enumerate}
\end{defn}

We have remarked that $E^{[\infty]}$ is itself a $(p)$-family, and it is trivially both full and strategic. Less trivial examples, including strategic $(p^+)$-filters, can be constructed using additional set-theoretic assumptions such as the Continuum Hypothesis ($\mathsf{CH}$) or Martin's Axiom ($\mathsf{MA}$), or by forcing directly with $({E^{[\infty]}},\preceq)$, however, the existence of full filters in ${E^{[\infty]}}$ cannot be proved in $\mathsf{ZFC}$ alone; see Sections 5 and 6 of \cite{MR3864398}.

Fullness of a family $\mathcal{H}$ tells us that it ``meets'' all $\mathcal{H}$-dense subsets of $E$. An important consequence of Theorem \ref{thm:local_Rosendal} is that, for strategic $(p^+)$-families, we can extend this to all $\mathcal{H}$-dense downwards closed (with respect to $\preceq$) subsets of $E^{[\infty]}$, provided those subsets are analytic. This is a $\mathsf{ZFC}$ instance of the genericity of strategic $(p^+)$-families.

\begin{lemma}\label{lem:H_dense}
	Let $\mathcal{H}$ be a $(p^+)$-family on $E$ and $X\in\mathcal{H}$. If $\mathbb{D}\subseteq E^{[\infty]}$ is $\mathcal{H}$-dense below $X$, downwards closed, and analytic, then there is a $Y\in\mathcal{H}\upharpoonright X$ such that II has a strategy in $G[Y]$ for playing into $\mathbb{D}$. Consequently, if $\mathcal{H}$ is strategic, then $(\mathcal{H}\upharpoonright X)\cap\mathbb{D}\neq\emptyset$.
\end{lemma}

\begin{proof}
	By Theorem \ref{thm:local_Rosendal}, there is an $Y\in\mathcal{H}\upharpoonright X$ such that either (1) has a strategy in $F[Y]$ for playing out of $\mathbb{D}$ or (2) has a strategy in $G[Y]$ for playing into $\mathbb{D}$. Since $\mathbb{D}$ is $\mathcal{H}$-dense below $X$, there is a $Z\preceq Y$ such that $Z\in\mathbb{D}$. Then, in the game $F[Y]$, II may play in such a way that their moves always lie in $\langle Z\rangle$, and as $\mathbb{D}$ is downwards closed, the outcome will also lie in $\mathbb{D}$, contrary to (1). Thus, (2) holds, from which the claim follows.
\end{proof}

\section{Perfect sets and Sacks forcing}\label{sec:Sacks}

We view the set of all finite binary sequences $2^{<\omega}$ as a complete binary tree, ordered by initial segment $\sqsubseteq$, and $2^\omega$ as its set of infinite branches. A \emph{subtree} of $2^{<\omega}$ is a subset $p\subseteq 2^{<\omega}$ which is downwards closed with respect to $\sqsubseteq$. We say that $p$ is \emph{perfect} if for every $s\in p$, there are $t,u\in p$ such that $s\sqsubseteq t,u$ and which are incomparable with respect to $\sqsubseteq$. For such a $p$, we let $[p]$ denote the set of all infinite branches through $p$, a perfect subset of $2^\omega$. Conversely, every perfect subset of $2^\omega$ is of the form $[p]$ for some perfect subtree $p$ of $2^{<\omega}$.

\emph{Sacks forcing} is the set $\mathbb{S}$ of all nonempty perfect subtrees of $2^{<\omega}$, ordered by inclusion: $p\leq q$ if $p\subseteq q$, or equivalently, $[p]\subseteq[q]$. Our presentation of Sacks forcing follows \cite{MR794485}. For a general overview of Sacks forcing, see the survey \cite{MR2155534}.

For $p\in\mathbb{S}$ and $s\in p$, we let 
\[
	p|s=\{t\in p:t\sqsubseteq s\text{ or }s\sqsubseteq t\},
\] 
and note that $p|s\in\mathbb{S}$ and $p|s\leq p$. We say that $s\in p$ is a \emph{branching node} of $p$ if both $s^\smallfrown 0$ and $s^\smallfrown 1$ are in $p$. For $n\in\omega$, a node $s\in p$ has \emph{branching level $n$} if there are exactly $n$ branching nodes $t\in p$ such that $t\sqsubset s$. We let 
\[
	\ell(n,p)=\{s\in p:\text{ $s$ is a minimal node of branching level $n$}\}.
\] 
Note that $|\ell(n,p)|=2^n$ and $p=\bigcup_{s\in\ell(n,p)}p|s$ for each $n\in\omega$. 

We collect a few lemmas about Sacks forcing which will be used later on.

\begin{lemma}[Lemma 1.5 in \cite{MR794485}]\label{lem:compat_restr}
	Suppose $p\in\mathbb{S}$ and $n\in\omega$. If $q\leq p$, then there is an $s\in\ell(n,p)$ such that $q$ and $p|s$ are compatible.\qed
\end{lemma}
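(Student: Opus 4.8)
The plan is to exhibit an explicit common refinement of $q$ and $p|s$ for a cleverly chosen $s$, rather than argue abstractly about compatibility. The key observation is that the restriction $q|s$ lies below $p|s$ as soon as $s\in q$: since $q\le p$, every node of $q|s$ is a node of $q\subseteq p$ that is $\sqsubseteq$-comparable with $s$, and hence belongs to $p|s=\{t\in p:t\sqsubseteq s\text{ or }s\sqsubseteq t\}$. Thus $q|s\subseteq p|s$, i.e. $q|s\le p|s$, while trivially $q|s\le q$. Since $q|s\in\mathbb{S}$ (by the general remark following the definition of $p|s$), it will serve as a common extension of $q$ and $p|s$, which is exactly what compatibility requires. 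So the whole task reduces to \emph{locating a single $s\in\ell(n,p)$ with $s\in q$}.

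To do this I would first fix an infinite branch $x\in[q]$, which exists because $q$ is a nonempty perfect tree. As $q\le p$ gives $x\in[p]$, and $p$ is perfect, there are branching nodes of $p$ of arbitrarily large length below $x$; hence there is a well-defined $\sqsubseteq$-minimal node $s\sqsubset x$ of branching level $n$ in $p$. I would check that this $s$ is in fact $\sqsubseteq$-minimal among \emph{all} nodes of $p$ of branching level $n$: any strictly shorter node of branching level $n$ would also lie below $x$ and contradict the minimality of $s$ along $x$. Therefore $s\in\ell(n,p)$. Finally, since $s\sqsubset x$ and $x\in[q]$, the node $s$ is an initial segment of a branch of $q$, so $s\in q$ automatically, and we are done by the previous paragraph.

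I expect the only real subtlety—more bookkeeping than obstacle—to be the handling of the definition of $\ell(n,p)$: confirming that the minimal node of branching level $n$ lying below a given branch coincides with a $\sqsubseteq$-minimal node of branching level $n$ in all of $p$, and that such a node exists along every branch (which is where perfectness of $p$ enters, guaranteeing that branching nodes occur cofinally along $x$). Everything else follows directly from the definitions of $p|s$ and $\le$.

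As a backup, there is a purely topological route: the clopen sets $[p|s]$ for $s\in\ell(n,p)$ partition $[p]\supseteq[q]$ into finitely many pieces (this is the content of $p=\bigcup_{s\in\ell(p,n)}p|s$), so some $[q]\cap[p|s]$ is a nonempty clopen subset of the perfect set $[q]$, hence itself perfect; its tree of initial segments is then a common perfect refinement of $q$ and $p|s$. The combinatorial argument above is more economical, but this variant makes the role of perfectness especially transparent.
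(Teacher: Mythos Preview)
Your argument is correct. The paper does not actually supply a proof of this lemma: it is quoted from Baumgartner's article and closed with a \qed, so there is nothing in the paper to compare against. One small wording issue: when you check that your $s$ lies in $\ell(n,p)$, the competitors you must rule out are nodes $t\sqsubsetneq s$, not arbitrary ``strictly shorter'' nodes; but since any $t\sqsubsetneq s$ is automatically an initial segment of $x$, your minimality-along-$x$ argument handles exactly these, and the conclusion stands. Your backup topological argument is also sound (distinct elements of $\ell(n,p)$ are $\sqsubseteq$-incomparable, so the sets $[p|s]$ really do partition $[p]$).
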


For $n\in\omega$, we write $p\leq_n q$ if $p\leq q$ and $\ell(n,p)=\ell(n,q)$. A sequence $(p_n)_{n\in\omega}$ of conditions in $\mathbb{S}$ is a \emph{fusion sequence} if $p_{n+1}\leq_n p_{n}$ for all $n\in\omega$. We call $p_\infty=\bigcap_{n\in\omega} p_n$ the \emph{fusion} of $(p_n)_{n\in\omega}$. The following important fact is known as the Fusion Lemma:

\begin{lemma}[Lemma 1.4 in \cite{MR794485}]\label{lem:fusion}
	If $(p_n)_{n\in\omega}$ is a fusion sequence in $\mathbb{S}$, then $p_\infty=\bigcap_{n\in\omega} p_n$ is a condition in $\mathbb{S}$ and $p_\infty\leq_n p_n$ for all $n\in\omega$.\qed
\end{lemma}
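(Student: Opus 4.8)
The plan is to deduce the Fusion Lemma from two ingredients: a \emph{monotonicity} property of the relations $\leq_k$, and the observation that each finite ``stem'' of $p_\infty$ is already stabilized by the sequence. Throughout I would freely use that in a perfect tree every node lies below nodes of arbitrarily high branching level, so that $|\ell(k,p)|=2^k$ and every branch of $p$ meets $\ell(k,p)$ in exactly one node.

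First I would prove a monotonicity claim: if $p\leq_k q$, then $p\leq_j q$ for every $j\leq k$. The point is that the equality $\ell(k,p)=\ell(k,q)$ already forces $p$ and $q$ to share the finite subtree $T:=\{s:s\sqsubseteq t\text{ for some }t\in\ell(k,p)\}$, since $T$ is literally the downward closure of the common set $\ell(k,p)=\ell(k,q)$ and both $p,q$ contain it (being downward closed). Within $T$, a node is a branching node of $p$ (equivalently of $q$) of branching level $<k$ if and only if it has two immediate successors in $T$; verifying this requires checking that every branching node of $p$ of level $<k$ extends, through \emph{both} of its successors, to some element of $\ell(k,p)$, which follows from perfection. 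Since the branching nodes of level $<k$ are thus common to $p$ and $q$, and these determine $\ell(j,\cdot)$ for all $j\leq k$, monotonicity follows. Applying this to the fusion sequence gives the stabilization fact: from $p_{m+1}\leq_{m+1}p_m$ we get $p_{m+1}\leq_n p_m$ for $m\geq n$, so $\ell(n,p_m)=\ell(n,p_n)$ for all $m\geq n$.

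Next I would let $T_n$ denote the downward closure of $\ell(n,p_n)$ and show $T_n\subseteq p_\infty$: for $m\geq n$ we have $\ell(n,p_m)=\ell(n,p_n)$, hence $T_n\subseteq p_m$, while for $m\leq n$ we have $T_n\subseteq p_n\subseteq p_m$; intersecting over $m$ gives $T_n\subseteq p_\infty$. From this everything follows. The tree $p_\infty=\bigcap_n p_n$ is downward closed as an intersection of subtrees, and nonempty since $\ell(n,p_n)\subseteq p_\infty$ for all $n$. It is perfect: given $s\in p_\infty$, its branching level in each $p_m$ is at most $|s|$, so choosing $n>|s|$ we get $s\sqsubseteq t$ for some $t\in\ell(n,p_n)=\ell(n,p_{n+1})\subseteq p_\infty$; following $p_{n+1}$ above $t$ to its next branching node produces two incomparable nodes of $\ell(n+1,p_{n+1})\subseteq T_{n+1}\subseteq p_\infty$, both extending $s$. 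Hence $p_\infty\in\mathbb{S}$.

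Finally, for $p_\infty\leq_n p_n$ the inclusion $p_\infty\subseteq p_n$ is immediate, so it remains to show $\ell(n,p_\infty)=\ell(n,p_n)$. Each $s\in\ell(n,p_n)$ lies in $p_\infty$, and the $n$ branching nodes of $p_n$ below $s$ lie in $T_n\subseteq p_\infty$ together with both their successors, so they remain branching in $p_\infty$; since $p_\infty\subseteq p_n$ forbids new branching nodes, $s$ has branching level exactly $n$ in $p_\infty$ and is still minimal such, giving $\ell(n,p_n)\subseteq\ell(n,p_\infty)$. As both sets have cardinality $2^n$ (using that $p_\infty$ is perfect), they coincide. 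I expect the main obstacle to be the monotonicity claim: pinning down exactly which nodes of $p$ are branching of level $<k$ purely from the finite stem $T$ is the one step that genuinely uses perfection and needs care, whereas the remainder is bookkeeping with the stabilized stems $T_n$.
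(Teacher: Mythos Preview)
Your argument is correct. The paper does not actually supply a proof of this lemma; it is simply quoted from \cite{MR794485} with a \qedsymbol, so there is nothing to compare your approach against beyond the standard literature proof, which is essentially what you have written. Your monotonicity step (that $\leq_k$ refines $\leq_j$ for $j\leq k$) is the only place requiring real care, and your justification via the finite stem $T$ and the characterization of branching nodes of level $<k$ as exactly those nodes of $T$ with both successors in $T$ is sound: the forward direction uses perfection to push each successor up to $\ell(k,p)$, and the backward direction uses that nodes of $\ell(k,p)$ cannot have successors in $T$. The rest---stabilization of the stems $T_n$, perfection of $p_\infty$ via $T_{n+1}$, and the cardinality count $|\ell(n,p_\infty)|=2^n$ to close the final equality---is routine and correctly executed.
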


At the risk of imprecision, we say that a model $M$ is \emph{sufficient} if it is transitive and satisfies a large enough fragment of $\mathsf{ZFC}$ for whatever results are needed. Given such an $M$, if $G\subseteq\mathbb{S}\cap M$ is an $M$-generic filter for $\mathbb{S}\cap M$, then there is a unique $g\in\bigcap_{p\in G}[p]$, called an \emph{$M$-generic Sacks real}. As $G$ is definable from $g$ by $G=\{p\in\mathbb{S}\cap M:g\in[p]\}$, we have that $M[g]=M[G]$, see Lemma 1.1 of \cite{MR794485}. The next lemma is folklore, it is close to saying that $\mathbb{S}$ is proper as a notion of forcing, but we include the proof, as it is a typical application of fusion.

\begin{lemma}\label{lem:M_gen_Sacks}
	Let $M$ be a sufficient countable model. For any $p\in\mathbb{S}\cap M$, there is a $q\leq p$ such that for every $g\in[q]$, $g$ is $M$-generic for $\mathbb{S}\cap M$.
\end{lemma}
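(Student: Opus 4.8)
The plan is to build $q$ as the fusion of a carefully chosen fusion sequence $(p_n)$ starting from $p_0 = p$, where at each stage we ensure that every branch passing through the relevant nodes decides the $n$th dense set. Since $M$ is a countable model, enumerate all dense subsets of $\mathbb{S}$ lying in $M$ as $(D_n)_{n\in\omega}$; recall that $g$ is $M$-generic exactly when $G_g = \{r\in\mathbb{S}: g\in[r]\}$ meets every such $D_n$, equivalently when for every $n$ there is some $r\in D_n\cap M$ with $g\in[r]$. So it suffices to arrange that $[q]\subseteq\bigcup\{[r]:r\in D_n\cap M\}$ for each $n$; then any $g\in[q]$ lies in some $[r]$ with $r\in D_n$, and since $q\leq r$ forces $r\in G_g$, genericity follows.

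The key step is the inductive construction. Suppose $p_n\in\mathbb{S}\cap M$ has been built. Recall $\ell(n+1,p_n)$ is the set of $2^{n+1}$ minimal nodes of branching level $n+1$, and $p_n = \bigcup_{s\in\ell(n+1,p_n)} p_n|s$. For each such $s$, the condition $p_n|s$ is in $M$ and lies below $p_n$. Working inside $M$, use the density of $D_{n}$ to find, for each $s\in\ell(n+1,p_n)$ in turn, a condition $r_s\leq p_n|s$ with $r_s\in D_n$; then replace $p_n|s$ by $r_s$. Because we only refine strictly below the branching nodes witnessing level $n+1$, we may take the union $p_{n+1}=\bigcup_{s\in\ell(n+1,p_n)} r_s$ and arrange that $\ell(n+1,p_{n+1})=\ell(n+1,p_n)$, i.e.\ $p_{n+1}\leq_{n+1} p_n$. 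This is the standard fusion bookkeeping: shrinking above level-$(n+1)$ nodes does not disturb the branching structure up to level $n+1$. All choices are made in $M$, so $p_{n+1}\in M$.

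By Lemma \ref{lem:fusion}, the fusion $q = p_\infty = \bigcap_n p_n$ is a condition in $\mathbb{S}$ with $q\leq_n p_n$ for all $n$, and $q\leq p$. To verify genericity of an arbitrary $g\in[q]$, fix $n$. Since $q\leq_{n+1} p_{n+1} \leq_{n+1} p_n$, the branch $g$ passes through a unique $s\in\ell(n+1,p_n)$, and $g\in[q]\cap[p_n|s]$; by construction the part of $p_{n+1}$ extending this $s$ is $r_s\in D_n$, so $g\in[r_s]$. Thus $r_s\in G_g\cap D_n$, and as $n$ was arbitrary, $G_g$ meets every dense set of $M$, so $g$ is $M$-generic.

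The main obstacle, and the only point requiring care, is the bookkeeping in the inductive step: ensuring that when we refine each $p_n|s$ into $D_n$ we do so without destroying the level-$(n+1)$ branching structure (so that $p_{n+1}\leq_{n+1}p_n$ holds and the fusion applies) and that a single integer $n$ suffices to handle $D_n$ across all $2^{n+1}$ nodes simultaneously. One must also confirm that the enumeration $(D_n)$ captures $M$-genericity correctly, which uses countability of $M$ and the characterization $G = \{p\in\mathbb{S}:g\in[p]\}$ noted before the lemma. Everything else is routine application of Lemma \ref{lem:fusion}.
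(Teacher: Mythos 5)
Your proposal is correct and follows essentially the same argument as the paper: enumerate the dense sets of $M$ using countability, refine cone-by-cone over $\ell(n+1,p_n)$ into $D_n$ while preserving the branching structure, fuse, and observe that any branch of the fusion lies in $[r]$ for some $r\in D_n$ at each stage. The only cosmetic difference is the indexing (the paper puts $p_0$ into $D_0$ directly, while you start from $p_0=p$ and handle $D_n$ at stage $n+1$), which changes nothing.
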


\begin{proof}
	Let $\{D_n:n\in\omega\}$ enumerate those subsets of $\mathbb{S}\cap M$ in $M$ which are dense open in $\mathbb{S}\cap M$. Choose $p_0\in D_0$ with $p_0\leq p$.
	
	Suppose we have chosen $p_0,\ldots,p_{n}\in\mathbb{S}\cap M$ with $p_{i+1}\leq_i p_{i}$ for each $i< n$. In order to find $p_{n+1}$, we refine $p_{n}$ cone-by-cone: enumerate $\ell(n,p_n)$ as $\{t_0,\ldots,t_{k}\}$. For each $\ell\leq k$, choose $q_{n+1}^\ell\in D_{n+1}$ with $q_{n+1}^\ell\leq p_{n}|t_\ell$. Let $p_{n+1}=\bigcup_{\ell\leq k}q_{n+1}^\ell$. Then, $p_{n+1}\in\mathbb{S}\cap M$ and $p_{n+1}\leq_{n} p_{n}$. 
	
	Let $q=\bigcap_{n\in\omega} p_n\leq p$, the fusion of $(p_n)_{n\in\omega}$. Let $g\in[q]$ and put $G=\{r\in\mathbb{S}\cap M:g\in[r]\}$, the corresponding filter. Since $g\in [p_0]$ and $p_0\in D_0$, we have that $G\cap D_0\neq\emptyset$. More generally, for each $n\in\omega$, $g\in [p_{n+1}]$, so there is some $\ell<|\ell(n,p_n)|$ for which $g\in[q_{n+1}^\ell]$. This implies $q_{n+1}^\ell\in G$, and since $q_{n+1}^\ell$ was chosen to be in $D_{n+1}$, we have that $G\cap D_{n+1}\neq\emptyset$. Hence, $g$ is $M$-generic for $\mathbb{S}\cap M$.
\end{proof}

Our final lemma of this section says that every sequence of ground model sets in a generic extension by $\mathbb{S}$ is covered by a sequence of finite sets, with prescribed size, from the ground model. This fact, known as the Sacks property, can also be proved using a routine fusion argument.

\begin{lemma}[Lemma 27 in \cite{MR2155534}]\label{lem:Sacks_prop}
	For any $p\in\mathbb{S}$, if $p\Vdash \dot{f}:\omega\to\mathbf{V}$, then there is a sequence of finite sets $(F_n)_{n\in\omega}\in\mathbf{V}$ such that $|F_n|\leq 2^n$ for each $n\in\omega$ and a $q\leq p$ such that $q\Vdash\forall n\in\omega(\dot{f}(n)\in F_n)$.\qed
\end{lemma}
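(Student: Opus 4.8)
The statement to prove is Lemma~\ref{lem:Sacks_prop}, the Sacks property. Let me plan a proof.

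\textbf{Setup and strategy.} The plan is to use a fusion argument, building a fusion sequence $(p_n)$ that refines $p$ so that at each stage $n$ we decide the value $\dot{f}(n)$ on each of the $2^n$ cones sitting above the nodes of $\ell(n,p_n)$. Since there are exactly $2^n$ such cones, and on each one we pin down a single possible value for $\dot{f}(n)$, we collect at most $2^n$ values into a finite set $F_n$, which will be the desired bound. The key point is that along any branch $g\in[q]$ through the fusion $q=p_\infty$, the value $\dot{f}(n)$ is forced to lie among the values read off from the cones at level $n$, hence in $F_n$.

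\textbf{Key steps.} First I would set $p_0 = p$ and proceed recursively. Suppose $p_n \in \mathbb{S}$ has been constructed with $p_n \leq p$. Enumerate $\ell(n+1, p_n)$ as $\{t_0, \ldots, t_k\}$, where $k = 2^{n+1}-1$. For each $\ell \leq k$, since $\dot{f}$ is forced to be a function into $\V$, the condition $p_n | t_\ell$ can be extended to some $q^\ell_{n+1} \leq p_n | t_\ell$ which \emph{decides} the value $\dot{f}(n+1)$, say $q^\ell_{n+1} \forces \dot{f}(n+1) = v^\ell_{n+1}$ for some $v^\ell_{n+1} \in \V$; here I use that a name for an element of the ground model is decided by a dense set of conditions, and that refining within a single cone does not disturb the branching structure below level $n+1$. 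I would then set $p_{n+1} = \bigcup_{\ell \leq k} q^\ell_{n+1}$. As in the proof of Lemma~\ref{lem:M_gen_Sacks}, $p_{n+1} \in \mathbb{S}$ and $p_{n+1} \leq_{n+1} p_n$, so $(p_n)$ is a fusion sequence.

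\textbf{Extracting the finite sets and conclusion.} Let $q = \bigcap_n p_n$ be the fusion; by Lemma~\ref{lem:fusion}, $q \in \mathbb{S}$ and $q \leq_n p_n$ for each $n$, so in particular $q \leq p$. Define, for each $n \geq 1$,
\[
	F_n = \{ v^\ell_n : \ell < |\ell(n, p_{n-1})| \},
\]
and $F_0$ arbitrarily (e.g., by deciding $\dot{f}(0)$ on $q$ directly, or simply any singleton); then $|F_n| \leq 2^n$ since $|\ell(n,p_{n-1})| = 2^n$. Since $(F_n) \in \V$, it remains to check $q \forces \forall n\,(\dot{f}(n) \in F_n)$. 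For this, fix $n$ and suppose $r \leq q$; since $r \leq p_n$ and $q \leq_{n} p_n$ implies $r$ extends into one of the cones of level $n$, i.e., there is $t_\ell \in \ell(n,p_{n-1})$ with $r \leq q^\ell_n$ (using that $q \subseteq p_n = \bigcup_\ell q^\ell_n$ and that any condition below $q$ lies below a single one of these pieces, which follows as in Lemma~\ref{lem:compat_restr}). Then $r \forces \dot{f}(n) = v^\ell_n \in F_n$. As conditions deciding $\dot{f}(n)$ to lie in $F_n$ are dense below $q$, we conclude $q \forces \forall n\,(\dot{f}(n)\in F_n)$, as required.

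\textbf{Main obstacle.} The step requiring the most care is the last one: verifying that every $r \leq q$ falls below a single piece $q^\ell_n$ so that the decided value applies. This is where one must argue that passing to the fusion does not spoil the cone-by-cone decisions made at finite stages—concretely, that because $q \leq_n p_n$ preserves $\ell(n,p_n)$ and $p_n = \bigcup_\ell q^\ell_n$, any branch or condition below $q$ is confined, above branching level $n$, to exactly one cone $q^\ell_n$, on which $\dot f(n)$ was already decided. The bookkeeping matching indices $\ell$ to the enumeration of $\ell(n,p_{n-1})$, and the off-by-one in the level indexing (deciding $\dot f(n+1)$ at stage $n+1$ using $\ell(n+1,p_n)$), must be handled consistently, but the argument is otherwise a routine application of fusion entirely parallel to Lemma~\ref{lem:M_gen_Sacks}.
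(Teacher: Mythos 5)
Your fusion argument is correct and is exactly the ``routine fusion argument'' the paper alludes to but does not actually write out (it cites Lemma 27 of \cite{MR2155534} instead); it also mirrors the cone-by-cone decision step carried out in the Claim inside the paper's proof that $\bar{\LU}$ is full, and in Lemma \ref{lem:M_gen_Sacks}. Two small points of hygiene: Lemma \ref{lem:compat_restr} gives only that $r$ is \emph{compatible} with some $q^\ell_n$ (not $r\leq q^\ell_n$), so the final step must be read exactly as your closing density remark states it; and for $F_0$ you should take the option of first shrinking to a $p_0\leq p$ deciding $\dot f(0)$ rather than ``any singleton.''
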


\section{Localization and preservation}\label{sec:local_1d}

Our next result is a localized form of Theorem \ref{thm:para_1d}.

\begin{thm}\label{thm:local_para_1d}
	Let $\mathcal{H}$ be a strategic $(p^+)$-family on $E$. For every Borel family $\{A_t:t\in\mathbb{R}\}$ of subsets of $E^\times$, there is a nonempty perfect set $P\subseteq\mathbb{R}$ and an $X\in\mathcal{H}$ such that either:
	\begin{enumerate}[(1)]
		\item for all $t\in P$, $A_t\cap \langle X\rangle=\emptyset$, or
		\item for all $t\in P$, $A_t$ is asymptotic below $X$.
	\end{enumerate}
\end{thm}

\begin{proof}
	As in the proof of Theorem \ref{thm:para_1d}, we first shrink down to a nonempty perfect $Q\subseteq\mathbb{R}$ on which the mapping $t\mapsto A_t$ is continuous, and assume that there is no nonempty perfect set $P\subseteq Q$ and no $X\in\mathcal{H}$ such that (2) holds for $P$ and $X$. We will produce witnesses to (1).
	
	Consider the set
	\begin{align*}
		\mathbb{D}=\{Y\in E^{[\infty]}: \exists (t_n)_{n\in\omega}\in Q^\omega
		[&(\{t_n:n\in\omega\},<)\cong(\mathbb{Q},<)\land\\
		&\forall n\in\omega (A_{t_n}\cap\langle Y\rangle=\emptyset)]\}.
	\end{align*}
	Clearly, $\mathbb{D}$ is analytic and downwards closed. We will show that $\mathbb{D}$ is $\mathcal{H}$-dense. Once that is accomplished, we can apply Lemma \ref{lem:H_dense} to obtain a $Y\in\mathcal{H}\cap\mathbb{D}$ and a $(t_n)_{n\in\omega}\in Q^\omega$ such that $(\{t_n:n\in\omega\},<)\cong(\mathbb{Q},<)$  and $A_{t_n}\cap\langle Y\rangle=\emptyset$ for all $n\in\omega$. We can then argue exactly as in the last part of the proof of Theorem \ref{thm:para_1d} to obtain a perfect $P\subseteq Q$ such that for all $t\in P$, $A_t\cap\langle Y\rangle=\emptyset$, verifying (1).

	To show that $\mathbb{D}$ is $\mathcal{H}$-dense, fix an $X\in\mathcal{H}$. We must find a $Z\preceq X$ such that $Z\in\mathbb{D}$. First, consider the set
	\[
		\mathbb{D}_0=\{Y\preceq X:\exists t\in Q(A_t\cap\langle Y\rangle=\emptyset)\}.
	\]
	Again, $\mathbb{D}_0$ is analytic and downwards closed. We claim that $\mathbb{D}_0$ is $\mathcal{H}$-dense below $X$. If not, then there is an $X'\in\mathcal{H}\upharpoonright X$ such that for all $Y\preceq X'$ and all $t\in Q$, $A_t\cap\langle Y\rangle\neq\emptyset$, but this just says that (2) holds for $P=Q$ and $X'$, contrary to our assumptions. Thus, $\mathbb{D}_0$ is $\mathcal{\mathcal{H}}$-dense below $X$ and so by Lemma \ref{lem:H_dense}, $(\mathcal{H}\upharpoonright X)\cap\mathbb{D}_0\neq\emptyset$. That is, there is an $X_0\in\mathcal{H}\upharpoonright X$ and a $t_0\in Q$ such that $A_{t_0}\cap\langle X_0\rangle=\emptyset$. We continue in this fashion to construct $X_{\alpha}\in\mathcal{H}\upharpoonright X$ and $t_\alpha\in Q$, for $\alpha<\omega_1$, such that all of the $t_\alpha$'s are distinct, $X_{\alpha}\preceq^* X_\beta$ whenever $\beta<\alpha$, and $A_{t_\alpha}\cap\langle X_\alpha\rangle=\emptyset$.
	
	Suppose that, for some $\gamma<\omega_1$, we have constructed $X_\alpha$ and $t_\alpha$ for all $\alpha<\gamma$ as desired. Let $P_\gamma\subseteq Q$ be a nonempty perfect set which does not contain any of the previous countably many $t_\alpha$'s.  
	
	If $\gamma=\beta+1$, then consider the set
	\[
		\mathbb{D}_\gamma=\{Y\preceq X_\beta:\exists t\in P_\gamma(A_t\cap\langle Y\rangle=\emptyset)\}.
	\]
	As above, $\mathbb{D}_\gamma$ is analytic, downwards closed, and $\mathcal{H}$-dense below $X_\beta$; if not, then there is an $X'\in\mathcal{H}\upharpoonright X_\beta$ such that for all $t\in P_\gamma$, $A_t$ is asymptotic below $X'$, contrary to our assumptions. Thus, by Lemma \ref{lem:H_dense}, there is an $X_\gamma\in\mathcal{H}\upharpoonright X_\beta$ and a $t_\gamma\in P_\gamma$ such that $A_{t_\gamma}\cap\langle X_\gamma\rangle=\emptyset$. 
	
	If instead, $\gamma$ is a limit ordinal, first apply the $(p)$-property in $\mathcal{H}$ to obtain a $Y\in\mathcal{H}\upharpoonright X$ such that $Y\preceq^* X_\alpha$ for all $\alpha<\gamma$, then argue as in the successor case to obtain $X_\gamma\in\mathcal{H}\upharpoonright Y$ and $t_\gamma\in P_\gamma$ as required.
	
	We can now apply Lemma \ref{lem:vEMS} to the sequences $(X_\alpha)_{\alpha<\omega_1}$ and $(t_\alpha)_{\alpha<\omega_1}$ to obtain a $Z\preceq X$ such that $Z\in\mathbb{D}$, verifying that $\mathbb{D}$ is $\mathcal{H}$-dense, as claimed.
\end{proof}

Clearly, Theorems \ref{thm:para_trans} and \ref{thm:para_trans_2} can be localized in a similar fashion. We now turn to the proof of our main preservation result, Theorem \ref{thm:Sacks_pres}.

\begin{defn}
	A filter $\mathcal{F}$ on $E$ has the \emph{strong $(p)$-property} if whenever $(X_{\vec{x}})_{\vec{x}\in{E^{[<\infty]}}}$ is contained in $\mathcal{F}$, there is an $X\in\mathcal{F}$ such that $X/\vec{x}\preceq X_{\vec{x}}$ for all $\vec{x}\sqsubseteq X$.
\end{defn}

The strong $(p)$-property easily implies the $(p)$-property, and every strategic $(p^+)$-filter is a strong $(p^+)$-filter (Proposition 4.6 in \cite{MR3864398}). If $|F|<\infty$, then every $(p^+)$-filter is already a strong $(p^+)$-filter (Corollary 4.5 in \cite{MR4516184}), but we do not know if this holds in general. 

\begin{proof}[Proof of Theorem \ref{thm:Sacks_pres}]
	All uses of the forcing relation $\Vdash$ and names will be in reference to $\mathbb{S}$. Let $\overline{\mathcal{F}}$ be a name such that 
	\[
		\Vdash \text{$\overline{\mathcal{F}}$ is the $\preceq$-upwards closure of $\mathcal{F}$}.
	\]
	 Our goal, then, is to show that
	\[
		\Vdash\text{$\overline{\mathcal{F}}$ is a strong $(p^+)$-filter}.
	\]
	Note that $E$ and ${E^{[<\infty]}}$ remain unchanged by forcing, as is the fact that $\mathcal{F}$ generates a filter.
	
	First, we verify that the strong $(p)$-property is preserved. Suppose that $p\in\mathbb{S}$ is such that
	\[
		p\Vdash\forall\vec{x}\in E^{[<\infty]}(\dot{X}_{\vec{x}}\in\overline{\mathcal{F}}),
	\]
	where each $\dot{X}_{\vec{x}}$ is a name for an element of ${E^{[\infty]}}$. We can then find names $\dot{Y}_{\vec{x}}$, for each $\vec{x}\in{E^{[<\infty]}}$, such that
	\[
		p\Vdash\forall \vec{x}\in E^{[<\infty]}(\dot{Y}_{\vec{x}}\preceq\dot{X}_{\vec{x}}\land\dot{Y}_{\vec{x}}\in\mathcal{F}).
	\]
	
	By Lemma \ref{lem:Sacks_prop}, there is a family $\{F_{\vec{x}}:\vec{x}\in{E^{[<\infty]}}\}$ of finite subsets of $\mathcal{F}$ and a condition $q\leq p$ such that
	\[
		q\Vdash\forall \vec{x}\in E^{[<\infty]}(\dot{Y}_{\vec{x}}\in F_{\vec{x}}).
	\]
	Since $\mathcal{F}$ is a filter, for each $\vec{x}\in{E^{[<\infty]}}$, we can find a $Z_{\vec{x}}\in\mathcal{F}$ which is $\preceq$-below every element of $F_{\vec{x}}$. By the strong $(p)$-property, there is a $Z\in\mathcal{F}$ with $Z/{\vec{z}}\preceq Z_{\vec{z}}$ for all ${\vec{z}}\sqsubseteq Z$. But then,
	\[
		q\Vdash Z\in\overline{\mathcal{F}}\land\forall \vec{z}\sqsubseteq Z(Z/\vec{z}\preceq \dot{X}_{\vec{z}}).
	\]
	Thus, $\overline{\mathcal{F}}$ is forced to have the strong $(p)$-property.
	
	We now turn to verifying that $\overline{\mathcal{F}}$ is forced to be full. To this end, suppose that $p\in\mathbb{S}$ and $\dot{A}$ is a name such that $p\Vdash\dot{A}\subseteq E^\times$. For $v\in E^\times$, we say that a condition $q\leq p$ \emph{decides} ``$v\in \dot{A}$'' if $q\Vdash v\in\dot{A}$ or $q\Vdash v\notin\dot{A}$. Enumerate $E^\times$ as $\{v_n:n\in\omega\}$.
	
	\begin{claim}
		There is a $p_\infty\leq p$ such that for all $n\in\omega$ and all $s\in\ell(n,p_\infty)$, $p_\infty|s$ decides ``$v_n\in\dot{A}$''.
	\end{claim}
	
	\begin{proof}[Proof of claim.]
		We construct $p_\infty$ using fusion. Let $p_0\leq p$ decide ``$v_0\in\dot{A}$''. Say $\ell(1,p_0)=\{s,t\}$. Then, there are $q_s\leq p_0|s$ and $q_t\leq p_0|t$ which each decide ``$v_1\in\dot{D}$''. Let $p_1=q_s\cup q_t\leq_1 p_0$. Continue in this fashion, building a fusion sequence $(p_n)_{n\in\omega}$ such that for each $s\in\ell(n,p_n)$, $p_n|s$ decides ``$v_n\in\dot{A}$''. Then, $p_\infty=\bigcap_{n\in\omega} p_n$ is as claimed.
	\end{proof}
	
	For each $f\in[p_\infty]$, let
	\[
		A_f=\{v\in E^\times:\forall n\in\omega[v=v_n \rightarrow \exists s\in\ell(n,p_\infty)(s\sqsubset f\land p_\infty|s\Vdash v_n\in\dot{A})]\}.
	\]
	The assignment $[p_\infty]\to 2^{E^\times}:f\mapsto A_f$ is continuous. Thus, by Theorem \ref{thm:local_para_1d}, there is a $q\leq p_\infty$ and an $X\in\mathcal{F}$ such that either (1) for all $f\in [q]$, $A_f\cap \langle X\rangle=\emptyset$, or (2) for all $f\in [q]$, $A_f$ is asymptotic below $X$.
	
	Suppose that (1) holds, but $q\not\Vdash \dot{A}\cap\langle X\rangle=\emptyset$. Then, there is an $r\leq q$ and a $v=v_n\in \langle X\rangle$ such that $r\Vdash v_n\in\dot{A}$. Take $s\in r\cap\ell(n,p_\infty)$. Then, $r|s\leq p_\infty|s$, so by the choice of $p_\infty$, we must have that $p_\infty|s\Vdash v_n\in\dot{A}$ as well. But then, for $f\in[r]\subseteq[q]$ with $s\sqsubset f$, we have that $v_n\in A_f\cap\langle X\rangle$, contrary to (1). So, in this case, we must have that $q\Vdash\dot{A}\cap\langle X\rangle=\emptyset$.
	
	Next, suppose that (2) holds, but $q\not\Vdash\text{$\dot{A}$ is asymptotic below $X$}$. Then, there is an $r\leq q$ and a name $\dot{Z}$ such that $r\Vdash\dot{Z}\preceq X\land \dot{A}\cap\langle \dot{Z}\rangle=\emptyset$. As in the claim above, we can find an $r_\infty\leq r$ such that for all $n\in\omega$ and all $s\in\ell(n,r_\infty)$, $r_\infty|s$ decides ``$v_n\in\langle\dot{Z}\rangle$''.
	
	Since $r_\infty\Vdash\text{$\dot{Z}$ is an infinite block sequence}$, there is an $n_0\in\omega$ such that for some $t_0\in\ell(n_0,r_\infty)$, $r_\infty|t_0\Vdash v_{n_0}\in\langle\dot{Z}\rangle$. Likewise, there is an $n_1>n_0$ such that $\max(\mathrm{supp}(v_{n_0}))<\min(\mathrm{supp}(v_{n_1}))$ and for some $t_1\in\ell(n_1,r_\infty)$ extending $t_0$, $r_\infty|t_1\Vdash v_{n_1}\in\langle\dot{Z}\rangle$. Continuing in this fashion, we may construct a $\sqsubset$-increasing sequence $(t_k)_{k\in\omega}$ in $r_\infty$ and a block sequence $(v_{n_k})_{k\in\omega}\preceq X$ such that for each $k\in\omega$, $t_k\in\ell(n_k,r_\infty)$ and $r_\infty|t_k\Vdash v_{n_k}\in\langle\dot{Z}\rangle$. 
	
	Let $f=\bigcup_{k\in\omega}t_k\in[r_\infty]\subseteq[q]$. By (2), there is a $v=v_N\in A_f\cap\langle (v_{n_k})_{k\in\omega}\rangle$. Say $v_N\in\langle v_{n_0},v_{n_1},\ldots,v_{n_\ell}\rangle$. Take $s\in\ell(N,p_\infty)$ such that $s\sqsubset f$. Then, $p_\infty|s\Vdash v_N\in\dot{A}$. If $s\sqsubseteq t_{n_\ell}$, then $r_\infty|t_{n_\ell}\leq p_\infty|t_{n_\ell}\leq p_\infty|s$, so $r_\infty|t_{n_\ell}\Vdash v_N\in\dot{A}\cap\langle\dot{Z}\rangle$, while if $t_{n_\ell}\sqsubseteq s$, then $r_\infty|s\leq r_\infty|t_{n_\ell}$ and so again $r_\infty|s\Vdash v_N\in\dot{A}\cap\langle \dot{Z}\rangle$. Either way, this contradicts the fact that $r_\infty\leq r$ and $r\Vdash\dot{A}\cap\langle\dot{Z}\rangle=\emptyset$. Hence, $q\Vdash\text{$\dot{A}$ is asymptotic below $X$}$.
	
	As either (1) or (2) must hold, we have proven that either $q\Vdash\dot{A}\cap\langle X\rangle=\emptyset$ or $q\Vdash\text{$\dot{A}$ is asymptotic below $X$}$. Thus, by Lemma \ref{lem:full_pigeon} (more specifically, the formulation for filters given following its proof), $\overline{\mathcal{F}}$ is forced to be full.
\end{proof}

\section{Perfectly strategically Ramsey sets}\label{sec:perf_strat}

In looking for an appropriate parametrized form of Theorem \ref{thm:Rosendal}, one might initially arrive at the following conjecture:

\begin{quote}
${(\ast)}$ \emph{If $\mathbb{A}\subseteq \mathbb{R}\times{E^{[\infty]}}$ is analytic, then there is a nonempty perfect set $P\subseteq \mathbb{R}$ and an $X\in{E^{[\infty]}}$ such that either:
	\begin{enumerate}[(1)]
		\item I has a strategy $\sigma$ in $F[X]$ such that $P\times[\sigma]\subseteq\mathbb{A}^c$, or
		\item II has a strategy $\alpha$ in $G[X]$ such that $P\times[\alpha]\subseteq\mathbb{A}$.
	\end{enumerate}}
\end{quote}

\noindent Here, $[\sigma]$ denotes the set of all outcomes of $F[X]$ when I follows $\sigma$, and likewise for $[\alpha]$ in $G[X]$. Unfortunately, unless $|F|=2$ (where even stronger dichotomies hold, cf.~\cite{MR3717938}), $(\ast)$ is false:

\begin{example}\label{ex:no_unif_II}
	Assume $|F|>2$. Let $A_0$ and $A_1$ be an asymptotic pair, as in Example \ref{ex:asym_pair}. Consider the set
	\[
		\mathbb{A}=\{(f,(y_n)_{n\in\omega})\in2^\omega\times{E^{[\infty]}}:\forall n\in\omega(y_n\in A_{f(n)})\}.
	\]
	Note that we have replaced $\mathbb{R}$ with $2^\omega$ here, which we may do without loss of generality, and $\mathbb{A}$ is closed in $2^\omega\times E^{[\infty]}$. Suppose that $P\subseteq 2^\omega$ is a nonempty perfect set and $X\in{E^{[\infty]}}$. Then, for any $f\in P$, II can ensure that their moves $y_n$ in $F[X]$ satisfy $y_n\in A_{f(n)}$ for all $n\in\omega$, and thus $(f,(y_n)_{n\in\omega})\in\mathbb{A}$. In particular, there can be no strategy $\sigma$ for I in $F[X]$ for which $P\times[\sigma]\subseteq\mathbb{A}^c$.
	
	Suppose instead that $\alpha$ is a strategy for II in $G[X]$ such that $P\times[\alpha]\subseteq \mathbb{A}$. Let $f,g\in P$ be such that $f\neq g$ and $(y_n)_{n\in\omega}$ an outcome of $\alpha$. Then, for all $n\in\omega$, $y_n\in A_{f(n)}$ and $y_n\in A_{g(n)}$, but this contradicts the disjointness of $A_0$ and $A_1$. Thus, there is no such strategy $\alpha$ either. Note that we have only used that $|P|>1$ here.
\end{example}

The problem with $(\ast)$ is the uniformity in the parameter from $P$: (2) says that there is a single strategy $\alpha$ for II in $G[X]$ such that for all $f\in P$, every outcome of $\alpha$ lies in the slice $\mathbb{A}_f$. It is in dropping this uniformity, in effect, swapping the order of quantifiers in (2), that we find the correct parametrized form of Theorems \ref{thm:Rosendal} and \ref{thm:local_Rosendal}, our Theorem \ref{thm:local_para_Rosendal}. Note that in Example \ref{ex:no_unif_II}, for every $f\in 2^\omega$, II clearly has a strategy in $G[X]$ to play into $\mathbb{A}_f$. However, we are able to maintain the uniformity in (1), which we can see directly in the open case (cf.~Theorem 35.32 and Exercise 35.33 in \cite{MR1321597}).

\begin{lemma}\label{lem:strat_unif}
	Let $\mathbb{A}\subseteq 2^\omega\times{E^{[\infty]}}$ be open, $P\subseteq 2^\omega$ a nonempty perfect set, and $X\in{E^{[\infty]}}$. If for all $f\in P$, I has a strategy in $F[X]$ for playing out of $\mathbb{A}_f$, then there is a nonempty perfect $Q\subseteq P$ and a strategy $\sigma$ for $I$ in $F[X]$ such that $Q\times[\sigma]\subseteq\mathbb{A}^c$.	
\end{lemma}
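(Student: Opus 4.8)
The plan is to first extract, on a perfect subset of $P$, a \emph{uniform} family of strategies for I witnessing the pointwise hypothesis, and then to amalgamate this family into a single strategy, exploiting the fact that I's moves in $F[X]$ are merely natural-number thresholds. Set up the space of strategies: a strategy $\sigma$ for I in $F[X]$ assigns to each finite block sequence $s=(y_0,\ldots,y_{k-1})$ in $\langle X\rangle$ (a legal record of II's past moves) a threshold $\sigma(s)\in\omega$. Since $\langle X\rangle$ is countable, the collection $\Sigma$ of all such $\sigma$ is a Polish space (a closed subspace of $\omega^D$, with $D$ the countable set of legal positions). For $f\in 2^\omega$ let $S_f\subseteq\Sigma$ be the set of strategies for I playing out of $\A_f$; by hypothesis $S_f\neq\emptyset$ for every $f\in P$. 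Writing $\A=p[T]$ for a tree $T$, membership $\sigma\in S_f$ unwinds to ``for every outcome $Y\in[\sigma]$ and every $z$, $(f,Y,z)\notin[T]$'', a $\Pi^1_1$ condition in $(f,\sigma)$; equivalently, via the unfolded asymptotic game in which II additionally builds a candidate branch $z$ of $T$, it asserts that I has a winning strategy in an \emph{open} game, which is determined in $\ZFC$ by Gale--Stewart.

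Next I would extract a uniform family on a perfect set. Using the coanalytic selection machinery (cf.\ Theorem 35.32 in \cite{MR1321597}), together with the boundedness theorem applied to the $\Pi^1_1$-rank measuring ``how quickly I can win'' the unfolded open game, I would produce a nonempty perfect $Q\subseteq P$ and an assignment $f\mapsto\sigma_f$, continuous on $Q$, with $\sigma_f\in S_f$ for all $f\in Q$. The gain from passing to a perfect (hence, inside $2^\omega$, compact) set is that continuity then forces, for each fixed position $s$, the map $f\mapsto\sigma_f(s)$ to take only finitely many values. I expect this step to be the crux: the hypothesis is only pointwise, the witnessing relation is coanalytic rather than analytic, and one must descend to a perfect set on which the chosen strategies vary in a genuinely bounded, uniform fashion. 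Controlling this uniformity---through the unfolding to a determined open game and the bounding of the associated ranks on a perfect subset---is where the real work lies.

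The final, and genuinely setting-specific, step is the amalgamation. Define a single strategy $\sigma$ for I by
\[
	\sigma(s)=\max_{f\in Q}\sigma_f(s),
\]
finite by the previous paragraph. I claim $Q\times[\sigma]\subseteq\A^c$. Indeed, fix $f\in Q$ and an outcome $Y=(y_k)\in[\sigma]$. At stage $k$, with $s_k=(y_0,\ldots,y_{k-1})$, legality of $Y$ against $\sigma$ gives $\min(\supp(y_k))>\sigma(s_k)\ge\sigma_f(s_k)$, so $Y$ is also a legal outcome against $\sigma_f$; that is, $[\sigma]\subseteq[\sigma_f]$. Since $\sigma_f$ plays out of $\A_f$ we have $[\sigma_f]\cap\A_f=\emptyset$, whence $Y\notin\A_f$, i.e.\ $(f,Y)\in\A^c$. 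As $f\in Q$ was arbitrary, $Q\times[\sigma]\subseteq\A^c$. The monotonicity used here---that raising I's thresholds only \emph{shrinks} the set of outcomes---is precisely what allows a compact family of strategies to be merged into one. This is peculiar to I's role in $F[X]$ and has no analogue for II in $G[X]$, which is exactly why uniformity in the parameter cannot be retained in conclusion (ii) of Theorem \ref{thm:para_Rosendal} (cf.\ Example \ref{ex:no_unif_II}); once the uniform family on $Q$ is in hand, the amalgamation itself is immediate.
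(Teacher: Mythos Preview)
Your proposal is correct and follows essentially the same approach as the paper: set up the Polish space of strategies, observe that the relation ``$\sigma$ plays out of $\A_f$'' is $\Pi^1_1$ in $(f,\sigma)$, uniformize to a continuous selection $f\mapsto\sigma_f$ on a perfect $Q\subseteq P$, use compactness of $Q$ to bound $\{\sigma_f(s):f\in Q\}$ for each position $s$, and take the pointwise maximum. The only difference is in how the continuous selection is obtained: the paper invokes Kond\^o's $\Pi^1_1$-uniformization (Theorem 36.14 in \cite{MR1321597}) to get a function with coanalytic graph, then uses Baire measurability of such functions to pass to a perfect set on which it is continuous (Theorem 8.38 in \cite{MR1321597}); your sketch via the boundedness theorem and ranks of the unfolded open game reaches the same endpoint but is slightly less direct.
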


\begin{proof}
	We can view strategies for I in $F[X]$ as functions from the set $X^{[<\infty]}$, of all finite block sequences in $X$, to $\omega$. Let $\mathcal{S}$ denote the space of all such functions, which we topologize as the product $\omega^{X^{[<\infty]}}$. Since $\mathbb{A}$ is open, we can fix a countable set $\mathcal{A}\subseteq 2^{<\omega}\times E^{[<\infty]}$ such that for all $(f,Y)\in 2^\omega\times E^{[\infty]}$, $(f,Y)\in \mathbb{A}$ if and only if there exists $(t,\vec{x})\in\mathcal{A}$ such that $t\sqsubset f$ and $\vec{x}\sqsubset Y$.
	
	Consider the set $\mathcal{B}\subseteq P\times\mathcal{S}$ defined by
	\[
		\mathcal{B}=\{(f,\sigma)\in P\times\mathcal{S}:\sigma \text{ is a strategy in $F[X]$ for playing out of $\mathbb{A}_f$}\}.
	\]
	So, $(f,\sigma)\in\mathcal{B}$ if and only if for every finite sequence $\vec{y}=(y_0,\ldots,y_n)$ of moves by II in $F[X]$ against $\sigma$, there is no $(t,\vec{x})\in\mathcal{A}$ such that $t\sqsubset f$ and $\vec{x}\sqsubset\vec{y}$. Thus, $\mathcal{B}$ is Borel. By the Jankov--von Neumann Uniformization Theorem (Theorem 18.1 in \cite{MR1321597}), $\mathcal{B}$ admits a $\sigma(\mathbf{\Sigma}^1_1)$-uniformization. That is, there is a $\sigma(\mathbf{\Sigma}^1_1)$-measurable function $f\mapsto\sigma_f$ on $P$ such that $(f,\sigma_f)\in\mathcal{B}$ for all $f\in P$. 
	
	As $\sigma(\mathbf{\Sigma}^1_1)$-measurable functions are Baire measurable, there is a nonempty perfect set $Q\subseteq P$ on which $f\mapsto\sigma_f$ is continuous (cf.~Theorem 8.38 in \cite{MR1321597}). In particular, the image of the compact set $Q$ under this map is a compact subset of $\mathcal{S}$. It follows that for any $\vec{x}\in X^{[<\infty]}$, the set $\{\sigma_f(\vec{x}):f\in Q\}$ is finite, say with maximum value $n_{\vec{x}}$. Let $\sigma$ be the strategy for $I$ in $F[X]$ defined by $\sigma(\vec{x})=n_{\vec{x}}$ for all $\vec{x}\in X^{[<\infty]}$. Then, for all $f\in Q$, any outcome in $F[X]$ against $\sigma$ is also a valid outcome against $\sigma_f$, and so $\sigma$ is a strategy for playing out of $\mathbb{A}_f$. Thus, $Q\times[\sigma]\subseteq\mathbb{A}^c$.
\end{proof}

In \cite{MR3864398}, given a family $\mathcal{H}$ on $E$, we defined a set $\mathbb{A}\subseteq E^{[\infty]}$ to be \emph{$\mathcal{H}$-strategically Ramsey} if for every $\vec{x}\in E^{[<\infty]}$ and $X\in\mathcal{H}$, there is a $Y\in\mathcal{H}\upharpoonright X$ such that either:
\begin{enumerate}[(1)]
	\item I has a strategy in $F[\vec{x},Y]$ for playing out of $\mathbb{A}$, or
	\item II has a strategy in $G[\vec{x},Y]$ for playing in to $\mathbb{A}$.
\end{enumerate}
Here, the games $F[\vec{x},X]$ and $G[\vec{x},X]$ are defined in exactly the same way as $F[X]$ and $G[X]$ except that we demand II's moves $(y_n)_{n\in\omega}$ to be supported above $\vec{x}$ and declare the outcome to be $\vec{x}^\smallfrown(y_n)_{n\in\omega}$. When $\mathcal{H}=E^{[\infty]}$, we just say that $\mathbb{A}$ is \emph{strategically Ramsey}, as in \cite{MR2604856}. Theorems \ref{thm:Rosendal} and \ref{thm:local_Rosendal} follow once it is shown that analytic sets are $\mathcal{H}$-strategically Ramsey, whenever $\mathcal{H}$ is a $(p^+)$-family. The following definition is the parametrized form of being $\mathcal{H}$-strategically Ramsey.

\begin{defn}
	Let $\mathcal{H}$ be a family on $E$. A set $\mathbb{A}\subseteq 2^\omega\times{E^{[\infty]}}$ is \emph{perfectly $\mathcal{H}$-strategically Ramsey} if for every $\vec{x}\in{E^{[<\infty]}}$, $X\in\mathcal{H}$, and $p\in\mathbb{S}$, there is a $Y\in\mathcal{H}\upharpoonright X$ and $q\leq p$ such that either:
	\begin{enumerate}[(1)]
		\item I has a strategy $\sigma$ in $F[\vec{x},Y]$ such that $[q]\times[\sigma]\subseteq\mathbb{A}^c$, or
		\item for every $f\in [q]$, II has a strategy in $G[\vec{x},Y]$ for playing into $\mathbb{A}_{f}$.	
	\end{enumerate}	
	When $\mathcal{H}={E^{[\infty]}}$, we just say that $\mathbb{A}$ is \emph{perfectly strategically Ramsey}.
\end{defn}

Theorem \ref{thm:local_para_Rosendal} will thus follow once we have proved that all analytic subsets of $2^\omega\times{E^{[\infty]}}$ are perfectly $\mathcal{H}$-strategically Ramsey, whenever $\mathcal{H}$ is a strategic $(p^+)$-family (Theorem \ref{thm:analytic} below).

It will be useful to ``finitize'' the Gowers game, as in \cite{MR1839387}: given $X\in E^{[\infty]}$, let $G^{<\infty}[X]$ be the two player game where I starts by playing a nonzero vector $x_0^{(0)}\in\langle X\rangle$, and then II responds with either $0$ or a nonzero vector $y_0\in\langle x_0^{(0)}\rangle$. If II plays $0$, then I plays a nonzero vector $x_1^{(0)}\in\langle X\rangle$ supported above $x_0^{(0)}$, and II must respond with either $0$ or a nonzero vector $y_0\in\langle x_0^{(0)},x_1^{(0)}\rangle$. If II plays a nonzero vector, the game restarts, with I playing a nonzero vector $x_0^{(1)}\in\langle X\rangle$, and so on. The nonzero plays of II are, again, required to form a block sequence $(y_n)_{n\in\omega}$, which is the outcome of the game. 
\[
	\begin{matrix}
		\mathrm{I}  & x^{(0)}_0,\ldots,x^{(0)}_{n_0} &       & x^{(1)}_0,\ldots,x^{(1)}_{n_1} &       & \cdots\\
		\mathrm{II} &	   & y_0\in\langle x^{(0)}_0,\ldots,x^{(0)}_{n_0}\rangle &       & y_1\in\langle x^{(1)}_0,\ldots,x^{(1)}_{n_1}\rangle &  \cdots
	\end{matrix}
\]
In essence, I is playing a block sequence $X_n\preceq X$ one vector at a time and II is responding with a nonzero vector $y_n\in\langle X_n\rangle$ as soon as enough of $X_n$ has been revealed to generate $y_n$. The game $G^{<\infty}[\vec{x},X]$, for $\vec{x}\in E^{[<\infty]}$, is defined similarly.

From the point of view of descriptive complexity, $G^{<\infty}[X]$ is a simpler game than $G[X]$ as both players play vectors, which can be coded as natural numbers, rather than infinite sequences of vectors. However, the following says we lose no generality in passing between $G[X]$ and $G^{<\infty}[X]$.

\begin{lemma}[Theorems 1.1 and 1.2 in \cite{MR1839387}]\label{lem:fin_Gowers}
	For any $\vec{x}\in E^{[<\infty]}$ and $X\in E^{[\infty]}$, the games $G[\vec{x},X]$ and $G^{<\infty}[\vec{x},X]$ are equivalent in the sense that for any $\mathbb{A}\subseteq E^{[\infty]}$, I (II, respectively) has a strategy in $G[\vec{x},X]$ to play into $\mathbb{A}$ if and only if I (II) has a strategy in $G^{<\infty}[\vec{x},X]$ to play into $\mathbb{A}$.
\end{lemma}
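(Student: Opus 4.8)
The plan is to prove Lemma \ref{lem:fin_Gowers} by directly translating strategies between the two games, exhibiting explicit simulations in both directions. The core observation is that an outcome of $G^*[X]$ is exactly an outcome of $G[X]$, once one reinterprets I's one-vector-at-a-time moves as the gradual revelation of a block sequence $X_n \preceq X$, and II's "pass" move (playing $0$) as the decision to wait for more information before committing to a vector $y_n$. Since the equivalence is asserted separately for each player, I would handle the four implications (I into $\A$, and II into $\A$, each direction) by building the required strategy in one game from a given strategy in the other.

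\medskip

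First I would treat the direction from $G[X]$ to $G^*[X]$, which is the easier half. Given a strategy $\tau$ for a player in $G[X]$, I simulate a run of $G^*[X]$ by maintaining a "book-keeping" record of the finitely many vectors revealed so far and feeding them to $\tau$ at the moments they determine a full move. For player II: whenever I in $G^*[X]$ has revealed enough of the current block sequence $X_n^{(j)}$ that $\tau$'s response $y_n \in \langle X_n\rangle$ is generated, II commits to $y_n$; before that, II plays $0$. For player I: I in $G^*[X]$ reveals, vector by vector, the block sequence $Y_n \preceq X$ that $\tau$ would play in $G[X]$, using II's intervening nonzero responses to advance the simulated $G[X]$-history. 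The outcome $(y_n)$ is identical in both games, so the strategy plays into $\A$ in $G^*[X]$ iff it does in $G[X]$.

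\medskip

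The reverse direction, from $G^*[X]$ to $G[X]$, is where the real content lies, and I expect this to be the main obstacle. The difficulty is that in $G^*[X]$ a player sees information one vector at a time and may stall (II by playing $0$, I by drip-feeding a block sequence), whereas in $G[X]$ each move is a complete object — a block sequence for I, a vector for II — so a strategy must be prepared to respond to a genuine infinite block sequence $Y_n$ all at once. To convert a $G^*[X]$-strategy $\tau^*$ for II into a $G[X]$-strategy, when I plays $Y_n \preceq X$ I must internally run $\tau^*$ against the vectors of $Y_n$ presented successively; the subtle point is arguing that $\tau^*$ must eventually commit to a nonzero vector $y_n \in \langle Y_n \rangle$ rather than playing $0$ forever, which follows because the outcome must be an \emph{infinite} block sequence (an infinite run where II passes cofinitely would not yield a legal outcome, so such runs are excluded or handled by the convention that $\tau^*$ wins only via legitimate infinite outcomes). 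For I, I simulate $\tau^*$ by having it reveal the vectors of a single $Y_n$ whose closure under II's (arbitrary) responses reconstructs a legal $G[X]$-play. In all cases the key invariant to maintain is that the two runs produce the \emph{same} outcome sequence $(y_n)$, so membership in $\A$ is preserved; the bulk of the write-up is the careful bookkeeping that the finitized moves assemble into legal moves of the unfinitized game and conversely. Since this is precisely the content of Theorems 1.1 and 1.2 of \cite{MR1839387}, I would either cite that directly or reproduce the simulation argument in the vector-space phrasing, noting that the proofs there go through verbatim once "block sequence" and "support" are interpreted in $E$.
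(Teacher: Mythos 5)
The paper does not actually prove this lemma: it is imported with the bracketed citation to \cite{MR1839387} and no argument is given, so there is no internal proof to compare against. Judged on its own terms, your proposal locates the difficulty in the wrong place, and the direction that carries the real content of the lemma is exactly the one you dismiss as ``the easier half.'' For player II, a strategy $\tau$ in $G[X]$ is a function whose input includes the \emph{entire infinite} block sequence $Y_n$ that I has just played, and nothing forces it to be finitely determined: two moves agreeing on a long initial segment may receive completely different responses. So your recipe ``II plays $0$ until I has revealed enough of the current block sequence that $\tau$'s response is generated'' is circular --- to know $\tau$'s response, II must already know all of I's move, which at any finite stage of $G^*[X]$ II does not. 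The easy direction for II is the opposite one, from $G^*[X]$ to $G[X]$: there II sees all of $Y_n$ at once, can feed its vectors successively to the finitized strategy, and your observation that a strategy playing into $\A\subseteq E^{[\infty]}$ cannot pass forever (else some legal run has a finite outcome) correctly closes that case. Both directions for player I are genuinely routine, since II's legal responses to a block sequence revealed one vector at a time are the same as to the whole sequence and passes convey no information.

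To repair the hard direction (II: $G[X]\Rightarrow G^*[X]$) one needs an idea your sketch does not contain: the simulated run of $G[X]$ need not use the block sequence that I is actually revealing in $G^*[X]$. In round $n$, with simulated history $(Y_0,\dots,Y_{n-1})$ and revealed vectors $x_0,\dots,x_j$, II should pass until there exists \emph{some} infinite $Z\preceq X$ with $\tau(Y_0,\dots,Y_{n-1},Z)\in\langle x_0,\dots,x_j\rangle$ (and supported above $y_{n-1}$); II then declares $Y_n:=Z$ for such a witness and commits to $\tau(Y_0,\dots,Y_{n-1},Z)$. This condition is evaluable from the finite data, and it must eventually hold because the full sequence I would reveal if II passed forever is itself such a $Z$: its $\tau$-response lies in the span of some finite initial segment of it. The resulting outcome is then the outcome of a legal $\tau$-run of $G[X]$, hence lies in $\A$ --- but note that the two runs are \emph{different} plays producing the same outcome, not one play ``reinterpreted,'' which is precisely the point your invariant ``the outcome is identical in both games'' elides. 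As written, the proposal does not establish the lemma; if you do not want to supply this argument, the honest course is to do what the paper does and simply cite \cite{MR1839387}.
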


Our next lemma concerns the complexity of certain sets related to the games $F[\vec{x},X]$ and $G[\vec{x},X]$ (cf.~Section 20.D of \cite{MR1321597}).	Given $\mathbb{A}\subseteq 2^\omega\times E^{[\infty]}$ and $\vec{x}\in{E^{[<\infty]}}$, we define:
	\begin{align*}
		\mathbb{A}_{\vec{x},\mathrm{I}}=\{(f,X)\in 2^\omega\times{E^{[\infty]}}: &\text{ I has a strategy in $F[\vec{x},X]$}\\
		&\text{for playing out of $\mathbb{A}_f$}\},\\
		\mathbb{A}_{\vec{x},\mathrm{II}}=\{(f,X)\in 2^\omega\times {E^{[\infty]}}:&\text{ II has a strategy in $G[\vec{x},X]$}\\&\text{for playing into $\mathbb{A}_f$}\}.
	\end{align*}

\begin{lemma}\label{lem:strat_complexity}
	If $\mathbb{A}$ is open, then $\mathbb{A}_{\vec{x},\mathrm{I}}$ is analytic and $\mathbb{A}_{\vec{x},\mathrm{II}}$ is $\mathbf{\Sigma}^1_2$.
\end{lemma}

\begin{proof}
	For $\mathbb{A}_{\vec{x},\mathrm{I}}$, $(f,X)\in \mathbb{A}_{\vec{x},\mathrm{I}}$ if and only if there is a strategy $\sigma:X^{[<\infty]}\to\omega$ for I in $F[\vec{x},X]$ such that $(f,\sigma)$ is in the Borel set $\mathcal{B}$ defined (uniformly in $X$) in the proof of Lemma \ref{lem:strat_unif} above, modified for $\vec{x}$, and so $\mathbb{A}_{\vec{x},\mathrm{I}}$ is analytic.
	
	For $\mathbb{A}_{\vec{x},\mathrm{II}}$, by Lemma \ref{lem:fin_Gowers} we may pass to the finitized Gowers game $G^{<\infty}[\vec{x},X]$ and so $(f,X)\in\mathbb{A}_{\vec{x},\mathrm{II}}$ if and only if there is a function $\alpha:X^{<\infty}\to\langle X\rangle$ such that for any sequence $(x_n)_{n\in\omega}$ of moves by I in $G^{<\infty}[\vec{x},X]$ against $\alpha$, there are infinitely many $n\in\omega$ for which $\alpha(x_0,\ldots,x_n)\neq 0$, whenever $\alpha(x_0,\ldots,x_n)\neq 0$ and $m$ is the least value $>n$ such that $\alpha(x_0,\ldots,x_m)\neq 0$, then $\alpha(x_0,\ldots,x_n)\in\langle x_{n+1},\ldots,x_m\rangle$, and the sequence of nonzero values of $\alpha$ played in response to $(x_n)_{n\in\omega}$ must lie in $\mathbb{A}_f$. This is a $\mathbf{\Sigma}^1_2$ condition.
\end{proof}

We can now use metamathematical techniques to prove that open sets are perfectly $\mathcal{H}$-strategically Ramsey.

\begin{lemma}\label{lem:perf_F-strat_Ramsey_open}
	Let $\mathcal{F}$ be a strategic $(p^+)$-filter on $E$. Then, open sets are perfectly $\mathcal{F}$-strategically Ramsey.
\end{lemma}

\begin{proof}
	Let $\mathbb{A}\subseteq 2^\omega\times{E^{[\infty]}}$ be open, and fix $\vec{x}\in{E^{[<\infty]}}$, $X\in{E^{[\infty]}}$ and $p\in\mathbb{S}$. By Lemma \ref{lem:strat_complexity} above and Theorem 25.4 in \cite{MR1940513}, the statements $(f,Y)\in\mathbb{A}_{\vec{x},\mathrm{I}}$ and $(f,Y)\in\mathbb{A}_{\vec{x},\mathrm{II}}$ are upwards absolute for sufficiently rich transitive models.
		
	Let $M$ be the transitive collapse of a countable elementary submodel of $H(\theta)$, for some sufficiently large $\theta$, having an extra predicate for $\mathcal{F}$, and containing $X$ and $p$. By coding and elementarity, we can demand that $\mathcal{F}\cap M\in M$ and that
	\[
		M\models\text{$\mathcal{F}\cap M$ is a strategic $(p^+)$-filter}.
	\]
	Note that $M$ is sufficient, in the terminology of Lemma \ref{lem:M_gen_Sacks}.
	
	Suppose that $g$ is $M$-generic for $\mathbb{S}\cap M$ with $g\in[p]$. We work in $M[g]$. Let $\overline{\mathcal{F}}$ be the upwards closure of $\mathcal{F}\cap M$. By Theorem \ref{thm:Sacks_pres}, $\overline{\mathcal{F}}$ is a $(p^+)$-filter, and so Theorem \ref{thm:local_Rosendal} holds for it, applied to the (coded) open set $\mathbb{A}_g^{M[g]}$ (really, we are using that $\mathbb{A}_g^{M[g]}$ is $\overline{\mathcal{F}}$-strategically Ramsey in $M[g]$). Thus, there is some $Y\in\overline{\mathcal{F}}\upharpoonright X$ witnessing this. Passing to a block subsequence of $Y$ which lies in $M$, and a $p'\leq p$ in $\mathbb{S}\cap M$ which decides it, we may assume that $Y\in\mathcal{F}\cap M$ and
	\[
		p'\Vdash (g,Y)\in\mathbb{A}_{\vec{x},\mathrm{I}} \text{ or }p'\Vdash (g,Y)\in\mathbb{A}_{\vec{x},\mathrm{II}}
	\]
	
	It follows by absoluteness that either for all $M$-generic $g\in[p']$, $(g,Y)\in\mathbb{A}_{\vec{x},\mathrm{I}}$, or for all $M$-generic $g\in[p']$, $(g,Y)\in\mathbb{A}_{\vec{x},\mathrm{II}}$. Taking $q\leq p'$ as in Lemma \ref{lem:M_gen_Sacks} and applying Lemma \ref{lem:strat_unif} if necessary, $Y$ and $q$ witness that $\mathbb{A}$ is perfectly $\mathcal{F}$-strategically Ramsey.
\end{proof}

\begin{lemma}\label{lem:local_para_Rosendal_open}
	Let $\mathcal{H}$ be a strategic $(p^+)$-family on $E$. Then, open sets are perfectly $\mathcal{H}$-strategically Ramsey.
\end{lemma}

\begin{proof}
	Let $\mathbb{A}\subseteq 2^\omega\times E^{[\infty]}$ be open, and fix $\vec{x}\in E^{[<\infty]}$, $X\in\mathcal{H}$, and $p\in\mathbb{S}$. By Lemma 5.4 in \cite{MR3864398}, we can force with $(\mathcal{H},\preceq)$ to generically add a strategic $(p^+)$-filter $\mathcal{G}\subseteq\mathcal{H}$ containing $X$, and doing so does not add new reals (and thus no new block sequences, perfect subsets of $2^\omega$, Borel sets, etc). In $\mathbf{V}[\mathcal{G}]$, $\mathbb{A}$ is perfectly $\mathcal{G}$-strategically Ramsey by Lemma \ref{lem:perf_F-strat_Ramsey_open}, so there are $q\leq p$ and $Y\in\mathcal{G}\upharpoonright X\subseteq\mathcal{H}\upharpoonright X$ witnessing this. As we have added no new reals, $q$ and $Y$ are in $\mathbf{V}$, and the fact that they witness the relevant dichotomy involves only quantification over reals, and thus is also true in $\mathbf{V}$. Here we are implicitly using Lemma \ref{lem:fin_Gowers} to pass between $G[\vec{x},Y]$ to $G^{<\infty}[\vec{x},Y]$. Hence, $\mathbb{A}$ is perfectly $\mathcal{H}$-strategically Ramsey.
\end{proof}

\section{Extending to analytic sets}\label{sec:analytic}

To complete the proof of Theorem \ref{thm:local_para_Rosendal}, we will extend Lemma \ref{lem:local_para_Rosendal_open} to all analytic sets using a combinatorial forcing argument, similar to that used in the proof of Theorem \ref{thm:Rosendal} in \cite{MR2604856}.

For the remainder of this section, we fix a strategic $(p^+)$-family $\mathcal{H}$ on $E$. We reserve the notation $(p,X)$, $(q,Y)$, etc, for pairs in $\mathbb{S}\times\mathcal{H}$, and write $(q,Y)\leq(p,X)$ when $q\leq p$ and $Y\preceq X$. We use $\vec{x}$, $\vec{y}$, etc, for finite block sequences in $E^{[<\infty]}$, and $n$, $m$, etc, for natural numbers. 

The following definition and Lemmas \ref{lem:accept_reject} through \ref{lem:accept_play_reject} refer implicitly to sets $\mathbb{A}_n\subseteq 2^\omega\times{E^{[\infty]}}$, for $n\in\omega$, and $\mathbb{A}=\bigcup_{n\in\omega}\mathbb{A}_n$.

\begin{defn}
	Given $(p,X)$, $\vec{x}$, and $n$, we say that:
	\begin{enumerate}[(i)]
		\item $(\vec{x},n)$ \emph{accepts} $(p,X)$ if I has a strategy $\sigma$ in $F[\vec{x},X]$ such that $[p]\times[\sigma]\subseteq\mathbb{A}_n^c$.
		\item $(\vec{x},n)$ \emph{rejects} $(p,X)$ if no $(q,Y)\leq(p,X)$ is accepted by $(\vec{x},n)$.
		\item $(\vec{x},n)$ \emph{decides} $(p,X)$ if it either accepts or rejects $(p,X)$.
	\end{enumerate}
\end{defn}

\begin{lemma}\label{lem:accept_reject}
	\begin{enumerate}[(a)]
		\item For every $(p,X)$, $\vec{x}$, and $n$, there is a $(q,Y)\leq(p,X)$ such that $(\vec{x},n)$ decides $(q,Y)$.
		\item For every $(p,X)$, $\vec{x}$, and $n$, if $(\vec{x},n)$ accepts (rejects, respectively) $(p,X)$, $q\leq p$ and $Y\preceq^* X$ in $\mathcal{H}$, then $(\vec{x},n)$ accepts (rejects) $(q,Y)$ as well.
		\item For every $(p,X)$, $\vec{x}$, $n$, and $m$, if $(\vec{x},n)$ accepts (rejects) $(p|t,X)$ for all $t\in\ell(m,p)$, then $(\vec{x},n)$ accepts (rejects) $(p,X)$.
		\item For every $(p,X)$, $\vec{x}$, $n$, and $m$, there is a $q\leq_m p$ and a $Y\in\mathcal{H}\upharpoonright X$ such that for all $t\in\ell(m,q)$, $(\vec{x},n)$ decides $(q|t,Y)$.
	\end{enumerate}
\end{lemma}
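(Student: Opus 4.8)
The plan is to prove the four parts of Lemma \ref{lem:accept_reject} in sequence, with each part building on the previous ones. Parts (a), (b), and (c) are essentially bookkeeping about the accept/reject terminology and should follow directly from the definitions. The real work is in part (d), which combines a fusion argument with the earlier parts.

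\textbf{Parts (a)--(c).} For (a), I would argue directly from the definition of ``decides'': if $(\vec{x},n)$ does not reject $(p,X)$, then by definition of ``rejects'' there is some $(q,Y)\leq(p,X)$ which is accepted by $(\vec{x},n)$, and then $(\vec{x},n)$ decides $(q,Y)$; otherwise $(\vec{x},n)$ rejects $(p,X)$ itself and we may take $(q,Y)=(p,X)$. For (b), monotonicity of acceptance should be immediate: if I has a strategy $\sigma$ in $F[\vec{x},X]$ with $[p]\times[\sigma]\subseteq\A_n^c$, then since $q\leq p$ gives $[q]\subseteq[p]$ and since $Y\preceq^* X$ means a tail of $Y$ is a block subsequence of $X$, the same strategy (or its obvious restriction to moves below $Y$) witnesses acceptance of $(q,Y)$; I should be a little careful here that $\preceq^*$ rather than $\preceq$ only costs finitely many coordinates, so one passes to $Y/k$ for suitable $k$, but this does not affect the existence of a winning strategy for I in the asymptotic game since I moves first by playing natural numbers. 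Rejection then follows by contraposition: if $(q,Y)$ had an accepted extension, that extension would also extend $(p,X)$. For (c), the key observation is $p=\bigcup_{t\in\ell(m,p)}p|t$ and $[p]=\bigsqcup_{t\in\ell(m,p)}[p|t]$; for acceptance, I would amalgamate the finitely many strategies $\sigma_t$ (one for each $t\in\ell(m,p)$) into a single strategy for I that, reading off which cone $f$ lies in, plays according to the corresponding $\sigma_t$ --- but since I's moves in $F[\vec{x},X]$ are natural numbers independent of $f$, I instead take pointwise maxima as in the proof of Lemma \ref{lem:strat_unif} to get one strategy dominating all the $\sigma_t$ simultaneously. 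For rejection, if some $(q,Y)\leq(p,X)$ were accepted, then by Lemma \ref{lem:compat_restr} $q$ is compatible with some $p|t$, and intersecting gives an accepted extension of $(p|t,X)$, contradicting that $(\vec{x},n)$ rejects $(p|t,X)$.

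\textbf{Part (d).} This is where the main obstacle lies. The goal is to find $q\leq_m p$ and a single $Y\in\LU\restr X$ such that $(\vec{x},n)$ decides $(q|t,Y)$ for \emph{every} $t\in\ell(m,q)$. The issue is that deciding each cone separately, via part (a), produces a different block sequence $Y_t$ for each of the $2^m$ nodes $t\in\ell(m,p)$, and I must consolidate these into one $Y\in\LU$ lying below all of them. The plan is to enumerate $\ell(m,p)=\{t_0,\dots,t_{k}\}$ with $k=2^m-1$ and process the cones one at a time: starting from $(p,X)$, use part (a) to find $(q_0,Y_0)\leq(p|t_0,X)$ with $(\vec{x},n)$ deciding $(q_0,Y_0)$; then, since $\LU$ is a filter, pass to $(p|t_1\cap(\text{appropriate tree}),Y_0)$ and again apply part (a) to decide the next cone while staying below $Y_0$, obtaining $Y_1\preceq Y_0$ in $\LU$; and so on. After $2^m$ steps I have $Y_k\preceq\cdots\preceq Y_0$ all in $\LU$ and refined cone-trees whose union is the desired $q\leq_m p$ (note $\ell(m,q)=\ell(m,p)$ is preserved because I only refine strictly above branching level $m$). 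Setting $Y=Y_k\in\LU\restr X$, part (b) upgrades the decision on $(q_\ell,Y_\ell)$ to a decision on $(q|t_\ell,Y)$ since $Y\preceq Y_\ell$ and $q|t_\ell\leq q_\ell$. The crux is the interaction between the filter property of $\LU$ (which lets me find a common lower bound across cones, so that the same $Y$ works everywhere) and the relation $\leq_m$ on Sacks conditions (which I must not disturb, forcing all refinements to occur above level $m$); verifying that these are compatible --- that the cone-by-cone refinement indeed yields a legitimate $q\in\mathbb{S}$ with $q\leq_m p$ --- is the step I expect to require the most care, though it follows the same template as the fusion argument in the proof of Lemma \ref{lem:M_gen_Sacks}.
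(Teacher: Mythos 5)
Your proposal is correct and follows essentially the same route as the paper: parts (a) and (b) directly from the definitions, part (c) via taking maxima of the finitely many strategies for acceptance and Lemma \ref{lem:compat_restr} for rejection, and part (d) by applying (a) successively to the cones $p|t_i$ with nested $Y_i\preceq Y_{i-1}$ and then upgrading via (b). The points you flag as needing care (the $\preceq^*$ tail adjustment in (b), and that cone-by-cone refinement above level $m$ preserves $\ell(m,\cdot)$ so that $q\leq_m p$) are exactly the right ones and are handled correctly.
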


\begin{proof}
	Parts (a) and (b) are immediate from the definitions above and the nature of the asymptotic game.
	
	(c) If $(\vec{x},n)$ accepts $(p|t,X)$ for all $t\in\ell(m,p)$, then by taking the pointwise maximum of each of the finitely many resulting strategies for I in $F[\vec{x},X]$ and using that $p=\bigcup_{t\in\ell(m,p)}p|t$, we have that $(\vec{x},n)$ accepts $(p,X)$. If $(\vec{x},n)$ rejects $(p|t,X)$ for all $t\in\ell(m,p)$, but there is some $(q,Y)\leq (p,X)$ which is accepted by $(\vec{x},n)$, then by Lemma \ref{lem:compat_restr}, there is a $t_0\in\ell(m,p)$ such that $q$ and $p|t_0$ are compatible. So, there is some $r$ below both $q$ and $p|t_0$ such that $(\vec{x},n)$ accepts $(r,Y)$, contradicting that $(\vec{x},n)$ rejects $(p|t_0,X)$.
	
	(d) Say $\ell(m,p)=\{t_0,\ldots,t_k\}$. Apply (a) to $(p|t_i,X)$ successively to obtain $(q_i,Y_i)\leq(p|t_i,X)$ which is decided by $(\vec{x},n)$, with $Y_i\preceq Y_{i-1}$, for $i=0,\ldots,k$, taking $Y_{-1}=X$. Let $q=\bigcup_{i\leq k} q_i\leq_m p$ and $Y=Y_k$. Then, $(q,Y)$ is as claimed.
\end{proof}

Fix an enumeration of $E^{[<\infty]}$ via some bijection $\#:E^{[<\infty]}\to\omega$.

\begin{lemma}\label{lem:decide}
	For every $(p,X)$, there is a $(q,Y)\leq(p,X)$ such that for all $(\vec{y},n)$, there is an $m$ such that for all $t\in\ell(m,q)$, $(\vec{y},n)$ decides $(q|t,Y)$.	
\end{lemma}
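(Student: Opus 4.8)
The plan is to run a single recursion that performs a Sacks fusion in the first coordinate and an $\LU$-diagonalization in the second coordinate simultaneously, discharging one requirement at each stage. First I would fix a bijective enumeration $E^{[<\infty]}\times\omega=\{(\vec{y}_k,n_k):k\in\omega\}$ of all pairs that must eventually be decided; this is possible since both factors are countable. I then construct by recursion on $k$ a fusion sequence $p=p_0\geq p_1\geq\cdots$ in $\mathbb{S}$ together with a $\preceq$-decreasing sequence $X=Y_0\succeq Y_1\succeq\cdots$ in $\LU\restr X$.

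At stage $k$, given $(p_k,Y_k)$, I would apply Lemma \ref{lem:accept_reject}(d) to the pair $(\vec{y}_k,n_k)$ with level parameter $m=k+1$, obtaining $p_{k+1}\leq_{k+1}p_k$ and $Y_{k+1}\in\LU\restr Y_k$ such that $(\vec{y}_k,n_k)$ decides $(p_{k+1}|t,Y_{k+1})$ for every $t\in\ell(k+1,p_{k+1})$. The relation $p_{k+1}\leq_{k+1}p_k$ is exactly what makes $(p_k)$ a fusion sequence, so by the Fusion Lemma (Lemma \ref{lem:fusion}) the fusion $q=\bigcap_k p_k$ is a condition in $\mathbb{S}$ with $q\leq_{k+1}p_{k+1}$, hence $\ell(k+1,q)=\ell(k+1,p_{k+1})$ for every $k$. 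In parallel, since $\LU$ is a $(p^+)$-filter and hence a $(p)$-family, the $\preceq$-decreasing sequence $(Y_k)$ admits a diagonalization $Y_\infty\in\LU$ with $Y_\infty\preceq^* Y_k$ for all $k$; using that $\LU$ is a filter I then choose $Y\in\LU$ with $Y\preceq Y_\infty$ and $Y\preceq X$, so that still $Y\preceq^* Y_k$ for all $k$ and $(q,Y)\leq(p,X)$.

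To verify the conclusion, fix an arbitrary pair $(\vec{y},n)$ and let $k$ be its index, so $(\vec{y},n)=(\vec{y}_k,n_k)$; I claim $m=k+1$ works. By construction $(\vec{y}_k,n_k)$ decides $(p_{k+1}|t,Y_{k+1})$ for each $t\in\ell(k+1,p_{k+1})=\ell(k+1,q)$. Since $q|t\leq p_{k+1}|t$ and $Y\preceq^* Y_{k+1}$, the monotonicity of accepting and rejecting recorded in Lemma \ref{lem:accept_reject}(b) transfers each decision downward: $(\vec{y}_k,n_k)$ accepts (resp.\ rejects) $(q|t,Y)$ whenever it accepts (resp.\ rejects) $(p_{k+1}|t,Y_{k+1})$. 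Hence $(\vec{y},n)$ decides $(q|t,Y)$ for every $t\in\ell(k+1,q)$, as required.

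The main obstacle is reconciling the two limiting processes: the Sacks fusion, which pins down the tree $q$ through the relations $\leq_{k+1}$, and the filter diagonalization, which only produces $Y$ below the $Y_k$ modulo finite tails ($\preceq^*$). The construction is arranged so that the decision made for the $k$-th requirement is read off at branching level exactly $k+1$, precisely the level frozen by the fusion, while the residual degradation in the $\LU$-coordinate is absorbed by the $\preceq^*$-monotonicity of Lemma \ref{lem:accept_reject}(b). The one genuinely delicate point is thus that $\preceq^*$-monotonicity (rather than $\preceq$) suffices in the second coordinate, which is exactly what part (b) provides, so no strengthening of the earlier lemmas is needed.
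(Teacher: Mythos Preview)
Your proof is correct and follows essentially the same approach as the paper: a Sacks fusion in the first coordinate combined with a $(p)$-property diagonalization in $\LU$ in the second, invoking Lemma \ref{lem:accept_reject}(d) at each stage and Lemma \ref{lem:accept_reject}(b) to push the decisions down to $(q|t,Y)$. The only cosmetic difference is that the paper processes at stage $m+1$ \emph{all} pairs $(\vec{y},\ell)$ with $\max\{\#(\vec{y}),\ell\}\leq m+1$, whereas you handle exactly one pair per stage via a fixed enumeration; both yield the existential conclusion required.
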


\begin{proof}
	By Lemma \ref{lem:accept_reject}(a), we may find $(q_0,Y_0)\leq(p,X)$ which is decided by $(\vec{y},0)$, where $\#(\vec{y})=0$. Suppose that $(q_i,Y_i)$ has already been defined for $i\leq m$. Let $\{(\vec{y}^0,\ell_0),\ldots,(\vec{y}^k,\ell_k)\}$ enumerate those finitely many $(\vec{y},\ell)$ with $\max\{ \#(\vec{y}),\ell\}\leq m+1$. Using Lemma \ref{lem:accept_reject}(d), we may successively choose $q^{j}\leq_{m+1} q^{j-1}\leq_{m+1} q_m$ and $Y^j\preceq Y^{j-1}\preceq Y_m$ in $\mathcal{H}$ such that for all $t\in\ell(m+1,q^j)$, $(\vec{y}^j,\ell_j)$ decides $(q^j|t,Y^j)$, for $j\leq k$. Let $q_{m+1}=q^k$ and $Y_{m+1}=Y^k$. Put $q=\bigcap_{m\in\omega} q_m$, the fusion of the $q_m$'s, and let $Y\in\mathcal{H}\upharpoonright X$ be a diagonalization of the $Y_m$'s. If $\max\{\#(\vec{y}),n\}\leq m$, then by construction, for all $t\in\ell(m,q)$, $(\vec{y},n)$ decides $(q|t,Y)$.
\end{proof}

\begin{lemma}\label{lem:accept_play_reject}
	For every $(p,X)$ and $\vec{x}$, there is a $(q,Y)\leq(p,X)$ such that either:
	\begin{enumerate}[(1)]
		\item I has a strategy $\sigma$ in $F[\vec{x},Y]$ such that $[q]\times[\sigma]\subseteq\mathbb{A}^c$, or
		\item for all $f\in [q]$, II has a strategy in $G[Y]$ for playing $(z_k)_{k\in\omega}$ for which there is an $n$ and a $t\sqsubset f$ such that $(\vec{x}^\smallfrown(z_0,\ldots,z_n),n)$ rejects $(q|t,Y)$.
	\end{enumerate}
\end{lemma}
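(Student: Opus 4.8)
The plan is to run the combinatorial forcing of Lemmas~\ref{lem:accept_reject} and~\ref{lem:decide} and then resolve the dichotomy using the open-game determinacy already exploited in the proof of Lemma~\ref{lem:strat_complexity}(a). First I would apply Lemma~\ref{lem:decide} to $(p,X)$ to obtain $(q_0,Y_0)\leq(p,X)$, with $Y_0\in\LU\restr X$, such that every $(\vec{y},n)$ is decided along the tree of $q_0$: for each $(\vec{y},n)$ there is a level $m$ with $(\vec{y},n)$ deciding $(q_0|t,Y_0)$ for all $t\in\ell(m,q_0)$. Henceforth every position we inspect is, uniformly along $[q_0]$, either accepting or rejecting, so ``fails to reject'' may be read as ``accepts''.

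For each $f\in[q_0]$ consider the set of outcomes of $G[Y_0]$ that reach a rejection relative to $f$,
\[
	W_f=\{(z_k)\preceq Y_0:\exists n\,\exists t\sqsubseteq f\ \big((\vec{x}\concat(z_0,\ldots,z_n),n)\text{ rejects }(q_0|t,Y_0)\big)\}.
\]
Membership in $W_f$ is witnessed by a finite initial segment $(z_0,\ldots,z_n)$ together with a long enough initial segment of $f$, so $W_f$ is open; hence, finitizing via Lemma~\ref{lem:fin_Gowers}, the Gowers game $G[Y_0]$ with payoff $W_f$ is determined exactly as in the proof of Lemma~\ref{lem:strat_complexity}(a). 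Thus, for each $f$, either II has a strategy in $G[Y_0]$ into $W_f$ --- which is precisely clause~(ii) for that $f$ --- or I has a strategy keeping every reached position accepting. I would then split on the size of $B=\{f\in[q_0]:\text{II has a strategy in }G[Y_0]\text{ into }W_f\}$. Since $W_f$ is the $f$-slice of a clopen set, Lemma~\ref{lem:strat_complexity}(a) shows $B$ is coanalytic, so $[q_0]\setminus B$ is analytic. If $[q_0]\setminus B$ is uncountable, the perfect set property yields a perfect $[q]\subseteq[q_0]\setminus B$; if it is countable, then $B$ is co-countable in the perfect set $[q_0]$ and hence contains a perfect $[q]$. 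In the latter case $(q,Y_0)$ witnesses clause~(ii) (using Lemma~\ref{lem:accept_reject}(b) to transfer rejections of $(q_0|t,Y_0)$ down to $(q|t,Y_0)$), and we are done.

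In the former case every $f\in[q]$ lies outside $B$, so along any play by I avoiding $W_f$ each position $(\vec{x}\concat(z_0,\ldots,z_{n-1}),n)$ accepts $(q_0|t_n,Y_0)$ for the relevant $t_n\sqsubseteq f$. The remaining task, the heart of the matter, is to manufacture from this a single strategy $\sigma$ for I in $F[\vec{x},Y_0]$ with $[q]\times[\sigma]\subseteq\A^c$. I would define $\sigma$ by recursion on the length of II's play: at stage $n$, having seen $z_0,\ldots,z_{n-1}$, let $\sigma$ play the maximum --- in the spirit of Lemma~\ref{lem:accept_reject}(c) --- of the numbers demanded by the finitely many strategies witnessing that $(\vec{x}\concat(z_0,\ldots,z_{n-1}),n)$ accepts $(q_0|t,Y_0)$ as $t$ ranges over the relevant branching nodes. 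Because $\A=\bigcup_n\A_n$ and the parameter $n$ is tied to the move count, maintaining acceptance of $\A_n$ at stage $n$ is designed to force the full outcome into $\bigcap_n\A_n^c=\A^c$, uniformly over $f\in[q]$; a final application of Lemma~\ref{lem:strat_unif} (and passage to a perfect subset of $[q]$ if necessary) then replaces the $f$-dependent data by the single strategy $\sigma$ demanded in clause~(i).

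The step I expect to be the main obstacle is exactly this conversion. Acceptance is a statement about a \emph{single} piece $\A_n$, whereas clause~(i) asks for a strategy out of the whole union $\A=\bigcup_n\A_n$; legitimizing the recursion requires a propagation property --- from acceptance of $\A_n$ at stage $n$, player I should be able to move so that acceptance is maintained at stage $n+1$ --- together with the approximating nature of the $\A_n$, ensuring that perpetual stagewise acceptance forces the infinite outcome into $\bigcap_n\A_n^c$, and one must also check that the finitely many asymptotic-game strategies combined at each stage remain mutually consistent as $n$ grows. The dual bookkeeping --- verifying that a failure of acceptance at a reachable position is a bona fide rejection of some $(q_0|t,Y_0)$ with $t\sqsubseteq f$, and that these rejections assemble into II's strategy uniformly in $f$ --- is the other delicate point, but it is precisely what the uniform decidedness from Lemma~\ref{lem:decide} and the perfect-set dichotomy on $B$ are designed to handle.
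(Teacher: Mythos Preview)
Your approach diverges from the paper's at the decisive step and leaves a genuine gap---one you yourself flag as ``the main obstacle,'' but do not resolve. After obtaining $(q_0,Y_0)$ from Lemma~\ref{lem:decide}, the paper does \emph{not} invoke open determinacy of $G[Y_0]$ slice-by-slice; it applies Theorem~\ref{thm:local_para_Rosendal_Borel} (Borel sets are perfectly $\LU$-strategically Ramsey) to the open set $\B$ whose $f$-slices are your $W_f$. This yields a further $(q,Y)\leq(q_0,Y_0)$ and, in the ``I wins'' case, a strategy $\sigma_\emptyset$ for I in the \emph{asymptotic} game $F[Y]$ with $[q]\times[\sigma_\emptyset]\subseteq\B^c$.

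The distinction is essential. Your dichotomy gives, for each $f\in[q]$, a strategy for I in the \emph{Gowers} game $G[Y_0]$ avoiding $W_f$; but $F$ is the harder game for I (it is $G$ with I restricted to tails), so I winning $G$ does not transfer to $F$. Your recursion for $\sigma$ presumes that at every position $(z_0,\ldots,z_{n-1})$ reached in $F[\vec{x},Y_0]$, the pair $(\vec{x}\concat(z_0,\ldots,z_{n-1}),n)$ accepts the relevant $(q_0|t,Y_0)$, so that the acceptance strategies exist and can be maxed. But you only know acceptance along outcomes of I's $G$-avoidance strategy, where II is confined to the subsequences I selects (and the strategy is $f$-dependent besides). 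In $F[\vec{x},Y_0]$, II may play any vector above I's number and can reach a position that rejects for some $t\sqsubseteq f$ with $f\in[q]$: that $f\notin B$ says only that II cannot \emph{force} a rejection in $G$, not that rejections are unreachable. The paper's $\sigma_\emptyset$ in $F[Y]$ is exactly the missing propagation device---following it guarantees that \emph{every} position II reaches in $F[Y]$ accepts $(q,Y)$ (via Lemma~\ref{lem:accept_reject}(c)), which is what legitimizes the combined strategy $\bar\sigma$. Lemma~\ref{lem:strat_unif} cannot repair this: it uniformizes $F$-strategies across $f$, it does not convert $G$-strategies into $F$-strategies.
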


\begin{proof}
	Let $(q',Y')\leq(p,X)$ be as in Lemma \ref{lem:decide}. We may assume that $Y'\preceq X/\vec{x}$. Let
	\begin{align*}
		\mathbb{B} =\{(f,(z_k)_{k\in\omega})\in[q']\times{E^{[\infty]}}&: \exists n\exists t\sqsubset f\\
		&[(\vec{x}^\smallfrown(z_0,\ldots,z_n),n) \text{ rejects } (q'|t,Y')]\}.
	\end{align*}
	Note that $\mathbb{B}$ is relatively open in $[q']\times{E^{[\infty]}}$. By Lemma \ref{lem:local_para_Rosendal_open}, there is a $(q,Y)\leq(q',Y')$ such that either I has a strategy $\sigma$ in $F[Y]$ such that $[q]\times[\sigma]\subseteq\mathbb{B}^c$, or for all $f\in[q]$, II has a strategy in $G[Y]$ for playing into $\mathbb{B}_f$. In the latter case, we're done, so we assume the former. 

	The strategy $\sigma_\emptyset=\sigma$ produces outcomes $(z_k)_{k\in\omega}$ in $F[Y]$ such that for all $n$, $(\vec{x}^\smallfrown(z_0,\ldots,z_n),n)$ does not reject, and thus accepts, $(q|t,Y)$ for all $t\in\ell(m,q)$, where $m$ is as in Lemma \ref{lem:decide} for $\vec{y}=\vec{x}^\smallfrown(z_0,\ldots,z_n)$. By Lemma \ref{lem:accept_reject}(c), $(\vec{x}^\smallfrown(z_0,\ldots,z_n),n)$ accepts $(q,Y)$. That is, I has a further strategy $\sigma_{(z_0,\ldots,z_n)}$ in $F[\vec{x}^\smallfrown(z_0,\ldots,z_n),Y]$ such that $[q]\times[\sigma_{(z_0,\ldots,z_n)}]\subseteq \mathbb{A}_n^c$. 
	
	We describe a strategy $\overline{\sigma}$ for I in $F[\vec{x},Y]$ such that $[q]\times[\overline{\sigma}]\subseteq\mathbb{A}^c$: if $(z_0,\ldots,z_{n-1})$ has been played by II thus far, I responds with
	\[
		\max\{\sigma_{\emptyset}(z_0,\ldots,z_{n-1}),\sigma_{(z_0)}(z_1,\ldots,z_{n-1}),\ldots,\sigma_{(z_0,\ldots,z_{n-1})}(\emptyset)\}.
	\]
	For each $n$, we have ensured that $[q]\times[\overline{\sigma}]\subseteq\mathbb{A}_n^c$, and thus $[q]\times[\overline{\sigma}]\subseteq\mathbb{A}^c$.
\end{proof}

Let $\{\mathbb{A}_s:s\in\omega^{<\omega}\}$ be a Suslin scheme of subsets of $2^\omega\times{E^{[\infty]}},$ where for each $s\in\omega^{<\omega}$, $\mathbb{A}_s=\bigcup_{n\in\omega}\mathbb{A}_{s^\smallfrown n}$. Given $\vec{x}$, $n$, and $s\in\omega^{<\omega}$, we define what it means for $(\vec{x},s,n)$ to \emph{accept} or \emph{reject} $(p,X)$ exactly as above, except with reference to $\mathbb{A}_s=\bigcup_{n\in\omega}\mathbb{A}_{s^\smallfrown n}$. Given $(q,Y)$, $\vec{y}$, and $s\in\omega^{<\omega}$, let
\begin{align*}
	\mathbb{B}(\vec{y},s,q,Y)=\{(f,(z_k)_{k\in\omega})\in [q]&\times{E^{[\infty]}}: \exists n\exists t\sqsubset f\\
	&[(\vec{y}^\smallfrown(z_0,\ldots,z_n),s,n)\text{ rejects }(q|t,Y)]\}.
\end{align*}
Our final lemma is a version of Lemma \ref{lem:accept_play_reject} which is uniform in $\vec{y}$ and $s$.

\begin{lemma}\label{lem:fusion_play_reject}
	For every $(p,X)$, there is a $(q,Y)\leq(p,X)$ such that for all $\vec{y}$, $s\in\omega^{<\omega}$, and $t\in q$, either:
	\begin{enumerate}[(1)]
		\item there exists an $r\leq q|t$ for which I has a strategy $\sigma$ in $F[\vec{y},Y]$ such that $[r]\times[\sigma]\subseteq\mathbb{A}_s^c$, or
		\item for all $f\in[q|t]$, II has a strategy in $G[Y]$ for playing into $\mathbb{B}(\vec{y},s,q,Y)_f$.
	\end{enumerate}
\end{lemma}

\begin{proof}
	For $\vec{y}=\emptyset$ and $s=\emptyset$, apply Lemma \ref{lem:accept_play_reject} to obtain a $(q_0,Y_0)\leq(p,X)$ such that either I has a strategy $\sigma$ in $F[Y_0]$ such that $[q_0]\times[\sigma]\subseteq\mathbb{A}_\emptyset^c$, or for all $f\in[q_0]$, II has a strategy in $G[Y_0]$ for playing into $\mathbb{B}(\emptyset,\emptyset,q_0,Y_0)_f$.
	
	Suppose we have defined $(q_i,Y_i)$ for $i\leq m$. Say $\ell(m+1,q_m)=\{t_0,\ldots,t_k\}$. Apply Lemma \ref{lem:accept_play_reject} to each $(q_m|t_j,Y_m)$ successively, for $j=0,\ldots, k$,  to obtain a $q^j\leq q_m|t_j$ and $Y^j\preceq Y^{j-1}\preceq Y_m$ in $\mathcal{H}$ such that for all of the finitely many $\vec{y}$ with $\#(\vec{y})\leq m+1$ and $s\in\omega^{<\omega}$ with $\max(s^\smallfrown|s|)\leq m+1$, either I has a strategy $\sigma$ in $F[\vec{y},Y^j]$ such that $[q^j]\times[\sigma]\subseteq\mathbb{A}_s^c$, or for all $f\in[q^j]$, II has a strategy in $G[Y^j]$ for playing into $\mathbb{B}(\vec{y},s,q^j,Y^j)_f$. Let $q_{m+1}=\bigcup_{j\leq k} q^j\leq_{m+1} q_m$ and $Y_{m+1}=Y^k$. 
	
	Put $q=\bigcap_{m\in\omega} q_m$ and let $Y\in\mathcal{H}\upharpoonright X$ be a diagonalization of the $Y_m$'s. 

	Suppose we are given $\vec{y}$, $s\in\omega^{<\omega}$, and $t\in q$. Taking $m$ greater than $\max\{\max(s^\smallfrown|s|),\#(\vec{y}),|t|\}$, the construction above yields two possibilities: either there is some $u\in\ell(m+1,q)$ which extends $t$ and for which I has a strategy $\sigma$ in $F[\vec{y},Y]$ such that $[q|u]\times[\sigma]\subseteq\mathbb{A}_s^c$, proving (1). Or, for all $u\in\ell(m+1,q)$ which extend $t$ and all $f\in[q|u]$, II has a strategy in $G[Y]$ for playing into $\mathbb{B}(\vec{y},s,q,Y)_f$, proving (2).
\end{proof}

We can now complete the proof of Theorem \ref{thm:local_para_Rosendal}.

\begin{thm}\label{thm:analytic}
	Analytic sets are perfectly $\mathcal{H}$-strategically Ramsey.	
\end{thm}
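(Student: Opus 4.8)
The plan is to run a Suslin-scheme combinatorial forcing argument, deriving the analytic case from the Borel case (Theorem \ref{thm:local_para_Rosendal_Borel}) via the unfolding machinery assembled in Lemmas \ref{lem:accept_reject} through \ref{lem:fusion_play_reject}. Fix an analytic $\A\subseteq 2^\omega\times{E^{[\infty]}}$, and write $\A=\bigcup_{s\in\omega^\omega}\bigcap_n\A_{s\restr n}$ for a Suslin scheme $\{\A_s:s\in\omega^{<\omega}\}$ of closed (or relatively clopen) sets with $\A_s=\bigcup_n\A_{s\concat n}$, as set up just before Lemma \ref{lem:fusion_play_reject}. Fix $\vec{x}\in{E^{[<\infty]}}$, $X\in\LU$, and $p\in\mathbb{S}$. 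First I would apply Lemma \ref{lem:fusion_play_reject} to $(p,X)$ to obtain a single $(q,Y)\leq(p,X)$ with the stated dichotomy \emph{simultaneously} for all $\vec{y}\in{E^{[<\infty]}}$ and all $s\in\omega^{<\omega}$: for each such pair, either (i) I has a strategy in $F[\vec{y},Y]$ playing $[q]$-uniformly out of $\A_s$, or (ii) for every $f\in[q]$, II has a strategy in $G[Y]$ playing into the ``rejection'' set $\B(\vec{y},s,q,Y)_f$.

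The dichotomy splits on what happens at the root $(\vec{x},\emptyset)$. \textbf{Case 1:} suppose (i) holds for $(\vec{x},\emptyset)$, i.e. I has a strategy $\sigma$ in $F[\vec{x},Y]$ with $[q]\times[\sigma]\subseteq\A_\emptyset^c=\A^c$. Then $(q,Y)$ directly witnesses clause (i) of being perfectly $\LU$-strategically Ramsey, and by Lemma \ref{lem:strat_unif} we may pass to condition (i$'$) if the weaker form is all we have verified; we are done. \textbf{Case 2:} suppose instead that for every $f\in[q]$, II has a strategy in $G[Y]$ to play $(z_k)$ for which some initial segment is rejected. The heart of the argument is to \emph{iterate} this rejection down the tree $\omega^{<\omega}$ to build, for each fixed $f\in[q]$, a single strategy for II in $G[Y]$ whose outcomes land in $\A_f$. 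The key point is that rejection of $(\vec{y},s,n)$ on $(q|t,Y)$ means no extension is accepted, so by the dichotomy of Lemma \ref{lem:fusion_play_reject} the ``II plays into the rejection set'' alternative must hold at the \emph{child} node $(\vec{y}\concat(z_0,\ldots,z_n),s\concat n)$; iterating produces an infinite branch $s\in\omega^\omega$ and a block sequence built from II's moves all of whose finite initial data witnesses membership in the corresponding closed pieces $\A_{s\restr k}$. Concatenating these finitely-many-moves-at-a-time strategies (exactly as in the $\bar\sigma$ construction of Lemma \ref{lem:accept_play_reject}, now indexed along the branch) yields a single strategy $\alpha_f$ for II in $G[Y]$ all of whose outcomes lie in $\bigcap_k\A_{s\restr k}\subseteq\A_f$, giving clause (ii).

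The main obstacle, and the step requiring the most care, is the bookkeeping in Case 2: one must interleave the finitized Gowers strategies (via Lemma \ref{lem:fin_Gowers}) so that, as II reveals vectors $z_0,z_1,\ldots$, the node $(\vec{x}\concat(z_0,\ldots,z_n),s_n)$ stays rejected at the relevant restrictions $(q|t,Y)$ for $t\sqsubseteq f$, so that the dichotomy keeps delivering the ``play into rejection'' alternative rather than stalling in an acceptance. This is where the fusion in Lemma \ref{lem:fusion_play_reject} was arranged precisely so that the decision made for a node is inherited by all sufficiently deep $\ell(m,q)$-restrictions, letting Lemma \ref{lem:accept_reject}(c) glue the cone-by-cone data back together. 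A secondary subtlety is keeping the perfect set $[q]$ fixed throughout the entire branch construction—this is automatic because the single $(q,Y)$ from Lemma \ref{lem:fusion_play_reject} simultaneously handles every $s\in\omega^{<\omega}$, so no further shrinking of $q$ is needed once Case 2 is entered, and the strategies $\alpha_f$ vary with $f$ exactly as permitted by clause (ii). Finally, one invokes Lemma \ref{lem:strat_unif} in Case 1 to upgrade (i$'$) to the uniform strategy in (i), completing the verification that $\A$ is perfectly $\LU$-strategically Ramsey, and hence (taking $\LH=\LU$, or $\LH=E^{[\infty]}$) establishing Theorems \ref{thm:para_Rosendal} and \ref{thm:local_para_Rosendal}.
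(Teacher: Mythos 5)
Your plan follows the paper's proof almost line for line: the same Suslin scheme built from a continuous presentation of $\A$, the same single application of Lemma \ref{lem:fusion_play_reject} to obtain one $(q,Y)$ handling all nodes $(\vec{y},s)$ simultaneously, the same case split at the root, and the same iteration of rejection down a branch $(n_0,n_1,\ldots)$ to assemble II's strategy for each fixed $f\in[q]$. The propagation step --- rejection of $(\vec{y}\concat(z_0,\ldots,z_n),s,n)$ at $(q|t,Y)$ kills alternative (i) at the child node $(\vec{y}\concat(z_0,\ldots,z_n),s\concat n)$, forcing alternative (ii) there --- is exactly right, as is the observation that $q$ never needs to be shrunk again once Case 2 is entered.

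The one genuine gap is the final step, where you claim the outcome of II's concatenated strategy lies in $\bigcap_k\A_{s\restr k}$ because its ``finite initial data witnesses membership in the corresponding closed pieces.'' There are two problems. First, the pieces cannot be taken closed: the scheme must satisfy $\A_s=\bigcup_n\A_{s\concat n}$ (this identity is what lets the glued strategy $\bar\sigma$ in Lemma \ref{lem:accept_play_reject}, which avoids each $\A_{s\concat n}$, avoid $\A_s$ itself), and a genuinely analytic set is not presented this way by closed sets; the paper takes $\A_s=F''(\LN_s)$ for a continuous $F:\omega^\omega\to 2^\omega\times E^{[\infty]}$ with image $\A$, and these pieces are analytic. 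Second, and more importantly, rejection of $(\vec{x}\concat(z_0,\ldots,z_{m_k}),(n_0,\ldots,n_{k-1}),n_k)$ at $(q|t_k,Y)$ does not place the actual outcome $(f,\vec{x}\concat Z)$ in $\A_{(n_0,\ldots,n_k)}$; it only yields the existence of \emph{some} extension $Z^k\sqsupseteq(z_0,\ldots,z_{m_k})$ and \emph{some} $f_k\in[q|t_k]$ with $(f_k,\vec{x}\concat Z^k)\in\A_{(n_0,\ldots,n_k)}$ (otherwise a trivial strategy for I would witness acceptance). To get from these approximants to $\vec{x}\concat Z\in\A_f$ one must pass to the limit: choose $\beta_k\in\LN_{(n_0,\ldots,n_k)}$ with $F(\beta_k)=(f_k,\vec{x}\concat Z^k)$, note that $\beta_k\to(n_0,n_1,\ldots)$, $f_k\to f$, and $Z^k\to Z$, and invoke the continuity of $F$ to conclude $(f,\vec{x}\concat Z)=F(n_0,n_1,\ldots)\in\A$. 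This convergence argument is the only point where the analyticity of $\A$ actually enters, and it is missing from your write-up; without it (or an equivalent vanishing-diameter regularity assumption on the scheme) the assertion that II's outcome lands in $\A_f$ is unjustified.
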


\begin{proof}
	Let $\mathbb{A}\subseteq2^\omega\times{E^{[\infty]}}$ be analytic, as witnessed by a continuous function $F:\omega^\omega\to2^\omega\times {E^{[\infty]}}$ such that $F[\omega^\omega]=\mathbb{A}$. For each $s\in\omega^{<\omega}$, let $\mathbb{A}_s=F[\mathcal{N}_s]$, where $\mathcal{N}_s=\{x\in\omega^\omega:s\sqsubseteq x\}$, and note that $\mathbb{A}_s=\bigcup_{n\in\omega}\mathbb{A}_{s^\smallfrown n}$. All references to acceptance or rejection and the sets $\mathbb{B}(\vec{y},s,q,Y)$ will be with respect to the Suslin scheme $\{\mathbb{A}_s:s\in\omega^{<\omega}\}$, as above. Let $(p,X)$ and $\vec{x}$ be given.
	
	By Lemma \ref{lem:fusion_play_reject}, there is a $(q,Y)\leq(p,X)$ such that for all $\vec{y}$, $s\in\omega^{<\omega}$, and $t\in q$, either:
	\begin{enumerate}[(1$'$)]
		\item there is an $r\leq q|t$ for which I has a strategy $\sigma$ in $F[\vec{y},Y]$ such that $[r]\times[\sigma]\subseteq\mathbb{A}_s^c$, or
		\item for all $f\in[q|t]$, II has a strategy in $G[Y]$ for playing into $\mathbb{B}(\vec{y},s,q,Y)_f$.
	\end{enumerate}
	
	If (1$'$) holds for $\vec{y}=\vec{x}$, $s=\emptyset$, and some $t\in q$, then since $\mathbb{A}_\emptyset=\mathbb{A}$, we're done. Thus, we suppose that (2$'$) holds for $\vec{y}=\vec{x}$, $s=\emptyset$, and every $t\in q$. 
	
	Fix $f\in[q]$. We will describe a strategy for II in $G[\vec{x},Y]$ for playing into $\mathbb{A}_f$. By (2$'$), II has a strategy in $G[Y]$ for playing into $\mathbb{B}(\vec{x},\emptyset,q,Y)_f$; they may follow this strategy until $(z_0,\ldots,z_{n_0})$ has been played such that $(\vec{x}^\smallfrown(z_0,\ldots,z_{n_0}),\emptyset,n_0)$ rejects $(q|t_0,Y)$, for some $t_0\sqsubset f$. By our choice of $(q,Y)$, II has a strategy in $G[Y]$ for playing into $\mathbb{B}(\vec{x}^\smallfrown(z_0,\ldots,z_{n_0}),(n_0),q|t_0,Y)_f$. Then, II follows this strategy until a further $(z_{n_0+1},\ldots,z_{n_0+n_1+1})$ has been played so that $(\vec{x}^\smallfrown(z_0,\ldots,z_{n_0},\ldots,z_{n_0+n_1+1}),(n_0),n_1)$ rejects $(q|t_1,Y)$, for some $t_1\sqsubset f$, which we may assume properly extends $t_0$, and so on.
	
	Let $m_k=(\sum_{j\leq k}n_j)+k$. The outcome of II's strategy just described will be a sequence $\vec{x}^\smallfrown Z$, where
	\[
		Z=(z_0,z_1,\ldots,z_{m_0},\ldots,z_{m_1},\ldots),
	\]
	such that for all $k\in\omega$, $(\vec{x}^\smallfrown(z_0,\ldots,z_{m_k}),(n_0,\ldots,n_{k-1}),n_k)$ rejects $(q|t_k,Y)$, $t_k\sqsubset f$.  In particular, there is some $Z^k\in E^{[\infty]}$, with $(z_0,\ldots,z_{m_k})\sqsubset Z^k$, and $f_k\in[q|t_k]$ such that $(f_k,\vec{x}^\smallfrown Z^k)\in\mathbb{A}_{(n_0,\ldots,n_k)}=F[\mathcal{N}_{(n_0,\ldots,n_k)}]$. Take $\beta_k\in\mathcal{N}_{(n_0,\ldots,n_k)}$ such that $F(\beta_k)=(f_k,\vec{x}^\smallfrown Z^k)$. Then, $\beta_k$ converges to $ (n_0,n_1,\ldots)$, $f_k$ to $f$, and $Z^k$ to $Z$ as $k\to\infty$, so by the continuity of $F$, $(f,\vec{x}^\smallfrown Z)=F(n_0,n_1,\ldots)\in\mathbb{A}$ and $\vec{x}^\smallfrown Z\in\mathbb{A}_f$.
\end{proof}

Finally, we note that, provided $|F|>2$, this conclusion for analytic sets is sharp in the sense that it is consistent relative to $\mathsf{ZFC}$ that there is a coanalytic subset $\mathbb{A}\subseteq E^{[\infty]}$ for which Theorem \ref{thm:Rosendal} fails (cf.~Section 6 of \cite{MR2179777} where this is proved for the analogous dichotomy in Banach spaces).

\section{Strategic filters and preservation}\label{sec:strat_pres}

Theorem \ref{thm:Sacks_pres} asserts that whenever $\mathcal{F}$ is a \emph{strategic} $(p^+)$-filter, it generates a \emph{strong} $(p^+)$-filter in the generic extension obtained after Sacks forcing. Whether the strategic property is itself preserved, or is necessary for this result, is the focus of this penultimate section.

When $|F|=2$, $(p^+)$-filters are necessarily strategic (Corollary 2.4 in \cite{MR4516184}), and so, by Theorem \ref{thm:Sacks_pres} above, are preserved by Sacks forcing. Alternatively, in this case nonzero vectors can be identified with their supports in $\mathrm{FIN}$, and $(p^+)$-filters are exactly stable ordered union ultrafilters (Theorem 6.3 in \cite{MR3864398}), which are equivalent to the \emph{selective} ultrafilters on $\mathrm{FIN}$ (by Corollary 2.5 of \cite{MR4516184}) shown to be preserved by Sacks forcing in \cite{MR3717938}.

In an earlier draft of this article, we claimed to show that the strategic property was \emph{not} preserved by Sacks forcing whenever $|F|>2$. However, an error in the proof was discovered by the anonymous referee, who then proposed Theorem \ref{thm:strong_strat} below. All of the results in the section are therefore due to the anonymous referee, to whom we again express our gratitude, and are presented with their kind permission.

We must first describe a variant of the Gowers game: given a filter $\mathcal{F}$ on $E$ and $X\in\mathcal{F}$, the \emph{restricted Gowers game} played below $X$, $G_{\mathcal{F}}[X]$, is defined in exactly the same way as the original Gowers game $G[X]$ except that I is restricted to playing block sequences $Y_k\in\mathcal{F}\upharpoonright X$. A \emph{strategy} $\alpha$ for player II in $G_{\mathcal{F}}[X]$ is defined as before, with $[\alpha]$ denoting the set of outcomes for this strategy. For $\vec{x}\in E^{[<\infty]}$, the game $G_{\mathcal{F}}[\vec{x},X]$ is defined analogously to $G[\vec{x},X]$. The following lemma allows us to pass from $G_{\mathcal{F}}[X]$ to $G[Y]$ for some $Y\in\mathcal{F}\upharpoonright X$, whenever $\mathcal{F}$ is a $(p^+)$-filter:

\begin{lemma}\label{lem:unrestricted}
	Let $\mathcal{F}$ be a $(p^+)$-filter on $E$, $X\in\mathcal{F}$, and $\alpha$ a strategy for II in $G_{\mathcal{F}}[X]$. Then, there exists a $Y\in\mathcal{F}\upharpoonright X$ and a strategy $\beta$ for II in $G[Y]$ such that $[\beta]\subseteq[\alpha]$.
\end{lemma}

\begin{proof}
	By Lemma 5.4 in \cite{MR4516184}, there is a \emph{tree} $T\subseteq E^{[<\infty]}$, meaning a set of finite block sequences closed under initial segments, satisfying:
	\begin{enumerate}[(i)]
		\item $[T]\subseteq[\alpha]$, where $[T]$ is the set of infinite branches through $T$, and
		\item whenever $\vec{x}\in T$ and $Y\in\mathcal{F}$, there is a $y\in\langle Y\rangle$ such that $\vec{x}^\smallfrown y\in T$.
	\end{enumerate}
	Given $\vec{x}\in T$, let 
	\[
		T_{\vec{x}}=\{y\in E^\times:\vec{x}^\smallfrown y\in T\}.
	\]
	Condition (ii) above says that for no $Y\in\mathcal{F}$ is it the case that $T_{\vec{x}}\cap\langle Y\rangle=\emptyset$. Since $\mathcal{F}$ is full, there must exist a $Y_{\vec{x}}\in\mathcal{F}$ such that $T_{\vec{x}}$ is asymptotic below $Y_{\vec{x}}$, by Lemma \ref{lem:full_pigeon}. Let $Y\in\mathcal{F}\upharpoonright X$ be a diagonalization of the $Y_{\vec{x}}$'s as $\vec{x}$ ranges over $T$ (to do this, enumerate the $\vec{x}$'s as $\vec{x}_n$'s and take successive block sequences in $\mathcal{F}$ which are contained in $\langle Y_{\vec{x}_0}\rangle\cap\cdots\cap \langle Y_{\vec{x}_n}\rangle$, which is possible since $\mathcal{F}$ is a filter, and then diagonalize). So, for every $\vec{x}\in T$, $T_{\vec{x}}$ is asymptotic below $Y$. Thus, we can define a strategy $\beta$ for II in $G[Y]$ as follows: assuming I and II have played $Y_0,\ldots,Y_{n-1}\preceq Y$ and $x_0,\ldots,x_{n-1}$, respectively, thus far, if I now plays $Y_n\preceq Y$, since $T_{(x_0,\ldots,x_{n-1})}$ is asymptotic below $Y$, II may respond with some $x_n\in T_{(x_0,\ldots,x_{n-1})}\cap\langle Y_n\rangle$. By construction, we have that $[\beta]\subseteq[T]$, and thus $[\beta]\subseteq[\alpha]$.
\end{proof}

The restricted Gowers games enable a form of Theorem \ref{thm:local_Rosendal} for $(p)$-families $\mathcal{H}$ without the additional assumption of fullness: whenever $\mathcal{H}$ is a $(p)$-family on $E$ and $\mathbb{A}\subseteq E^{[\infty]}$ is analytic, then for every $\vec{x}\in E^{[<\infty]}$ and $X\in\mathcal{H}$, there is a $Y\in\mathcal{H}\upharpoonright X$ such that either:
\begin{enumerate}[(1)]
	\item I has a strategy in $F[\vec{x},Y]$ for playing out of $\mathbb{A}$, or
	\item II has a strategy in $G_{\mathcal{H}}[\vec{x},Y]$ for playing into $\mathbb{A}$.
\end{enumerate}
In other words, $\mathbb{A}$ is $\mathcal{H}$-strategically Ramsey, but with the strategy in (2) weakened to being one for the restricted Gowers game; see Theorem 3.11.5 in \cite{Smythe_thesis} or, in more generality, Theorem 3.3 in \cite{MR4127870}.

Under the additional assumption of the existence of a measurable cardinal, it is further shown in Theorem II.36 of \cite{deRancourt_thesis}\footnote{While this result has not been published outside of the thesis \cite{deRancourt_thesis}, it can be proved using methods similar to that of Theorem 3.3 in \cite{MR4127870}.} that the same conclusion holds for all $\mathbf{\Sigma}^1_2$ sets $\mathbb{A}\subseteq E^{[\infty]}$. If $\mathcal{H}$ is, moreover, a full filter, we can then apply Lemma \ref{lem:unrestricted} to obtain the following:

\begin{thm}\label{thm:Sigma12}
	Assume the existence of a measurable cardinal. Let $\mathcal{F}$ be a $(p^+)$-filter on $E$. Then, $\mathbf{\Sigma}^1_2$ sets are $\mathcal{F}$-strategically Ramsey.\qed	
\end{thm}

As in the proof of Lemma \ref{lem:local_para_Rosendal_open}, we can force to add a $(p^+)$-filter inside any $(p^+)$-family $\mathcal{H}$, without adding reals, and it will follow that $\mathbf{\Sigma}^1_2$ sets are $\mathcal{H}$-strategically Ramsey, under the same large cardinal assumption.

We can now apply Theorem \ref{thm:Sigma12} to show that every strong $(p^+)$-filter is, in fact, strategic, in the presence of a measurable cardinal.

\begin{thm}\label{thm:strong_strat}
	Assume the existence of a measurable cardinal. Every strong $(p^+)$-filter on $E$ is strategic.	
\end{thm}

\begin{proof}
	Let $\mathcal{F}$ be a strong $(p^+)$-filter on $E$. Fix a strategy $\alpha$ for II in the finitized Gowers game $G^{<\infty}[X]$, for some $X\in\mathcal{F}$, and let
	\[
		\mathbb{D}=\{Z\in E^{[\infty]}:\exists Y\in[\alpha](Z\preceq Y)\}.
	\]
	By Lemma 4.7 of \cite{MR3864398}, $\mathbb{D}$ is dense open (in the sense of forcing with respect to $\preceq$) below $X$, and in order to show that $\mathcal{F}$ is strategic, it suffices to prove that $\mathbb{D}\cap\mathcal{F}\neq\emptyset$.
	
	Since $\mathbb{D}$ is dense open below $X$, for every $Y\preceq X$, I has a strategy in $G[Y]$ for playing into $\mathbb{D}$; simply play a fixed element of $\mathbb{D}$ below $Y$ repeatedly. As $\mathbb{D}^c$ is coanalytic, and in particular $\mathbf{\Sigma}^1_2$, it is $\mathcal{F}$-strategically Ramsey by Theorem \ref{thm:Sigma12}. Thus, there exists a $Y\in\mathcal{F}\upharpoonright X$ such that I has a strategy $\sigma$ in $F[Y]$ for playing into $\mathbb{D}$. Finally, by Theorem 4.3 in \cite{MR3864398} and the fact that $\mathcal{F}$ is a strong $(p^+)$-filter, $[\sigma]\cap\mathcal{F}\neq\emptyset$, and thus $\mathbb{D}\cap\mathcal{F}\neq\emptyset$. Hence, $\mathcal{F}$ is strategic.
\end{proof}

Consequently, in the presence of sufficient large cardinals, strong $(p^+)$-filters have complete combinatorics, in the sense that they are generic over $\mathbf{L}(\mathbb{R})$ for $(E^{[\infty]},\preceq)$, by Theorem 1.2 of \cite{MR3864398}.

We now come to the preservation result, which can be viewed as a sharp form of Theorem \ref{thm:Sacks_pres} in the presence of a measurable cardinal.

\begin{cor}\label{cor:strat_pres}
	Assume the existence of a measurable cardinal. If $\mathcal{F}$ is a strategic $(p^+)$-filter on $E$ and $g$ is $\mathbf{V}$-generic for $\mathbb{S}$, then $\mathcal{F}$ generates a strategic $(p^+)$-filter in $\mathbf{V}[g]$.	
\end{cor}

\begin{proof}
	By Theorem \ref{thm:Sacks_pres}, $\mathcal{F}$ generates a strong $(p^+)$-filter $\overline{\mathcal{F}}$ in $\mathbf{V}[g]$. By the L\'{e}vy--Solovay Theorem (Theorem 21.1 in \cite{MR1940513}), the measurable cardinal remains measurable in $\mathbf{V}[g]$, and so $\overline{\mathcal{F}}$ is strategic by Theorem \ref{thm:strong_strat}.
\end{proof}

Finally, we note that the same conclusion holds after adding finitely many Sacks reals, iteratively.

\section{Coda}\label{sec:end}

We end here with a discussion of the ways Theorems \ref{thm:local_para_Rosendal} and \ref{thm:Sacks_pres} could be improved and some ideas for how to do so. First and foremost, there is the hypothesis of being ``strategic'' in Theorems \ref{thm:local_para_Rosendal} and \ref{thm:Sacks_pres}. This assumption is not present in Theorem \ref{thm:local_Rosendal}, the author's original local Ramsey theorem for block sequences in $E$, and we do not know whether it is preserved by Sacks forcing, without additional large cardinal assumptions.

\begin{ques}\label{ques:strat}
	Is ``strategic'' necessary in Theorems \ref{thm:local_para_Rosendal} and \ref{thm:Sacks_pres}? Can it be reduced to ``strong'', or even just ``$(p^+)$'', in $\mathsf{ZFC}$?
\end{ques}

The strategic property is used in only one part of the proofs of Theorems \ref{thm:local_para_Rosendal} and \ref{thm:Sacks_pres}, namely in the proof of Theorem \ref{thm:local_para_1d}, the local version of the parametrized weak pigeonhole principle ($\dagger$), via an application of Lemma \ref{lem:H_dense}. Note that fullness (Proposition 3.6 in \cite{MR3864398}) and the $(p)$-property (Theorem 4.1 in \cite{MR4516184}) are both necessary for the clopen case of Theorem \ref{thm:local_Rosendal}, and so cannot be removed from the hypotheses of Theorem \ref{thm:local_para_Rosendal} either.

We believe that Question \ref{ques:strat} would be resolved by finding the ``right'' proof of Theorem \ref{thm:local_para_Rosendal}. What we have in mind is a combinatorial (rather than metamathematical) argument in the style of Ellentuck's proof \cite{MR0349393} of the Galvin--Prikry and Silver Theorems, and Pawlikowki's proof \cite{MR1075374} of the Miller--Todor\v{c}evi\'{c} Theorem, combined with arguments from \cite{MR2604856} and \cite{MR3864398} specific to the vector space setting.

\begin{ques}\label{ques:Ellentuck}
	Is there an ``Ellentuck-style'' proof of Theorem \ref{thm:local_para_Rosendal}?
\end{ques}

The arguments in Section \ref{sec:analytic} are the remnants of such a proof and show that one can go from open sets being perfectly $\mathcal{H}$-strategically Ramsey to analytic sets using only the $(p)$-property of $\mathcal{H}$ and fusion in $\mathbb{S}$. Thus, it would suffice to give an ``Ellentuck-style'' proof of Lemma \ref{lem:local_para_Rosendal_open}.

Baumgartner and Laver \cite{MR556894} showed that selective ultrafilters are preserved by iterated Sacks forcing. That is, after an $\omega_2$-length countable support iteration of Sacks forcing, any selective ultrafilters in the ground model still generate selective ultrafilters in the extension. The presence of ``strategic'' in the hypotheses of Theorem \ref{thm:Sacks_pres}, and its absence in the conclusion, may present an obstacle to proving an analogous result for iterated Sacks forcing here in $\mathsf{ZFC}$ (it may be easier to show such a result under the existence of a measurable cardinal, using Corollary \ref{cor:strat_pres}).

\begin{ques}\label{ques:iteration}
	If $\mathcal{F}$ is a strategic $(p^+)$-filter on $E$, $\alpha\leq\omega_2$, and $G_\alpha$ is $\mathbf{V}$-generic for an $\alpha$-length countable support iteration $\mathbb{S}_{\alpha}$ of Sacks forcing, must $\mathcal{F}$ generate a strong $(p^+)$-filter in $\mathbf{V}[G_\alpha]$?
\end{ques}

Next is the question of improving the conclusion of Theorem \ref{thm:local_para_Rosendal}. The most natural extension would be to parametrize by a countable sequence of perfect sets, rather than just a single perfect set.

\begin{ques}\label{ques:prod_pres}
	Can the Ramsey theory of block sequences in $E$ be parametrized by countable sequences of perfect sets? That is, given an analytic set $\mathbb{A}\subseteq \mathbb{R}^\omega\times E^{[\infty]}$, must there exist a sequence $(P_n)_{n\in\omega}$ of nonempty perfect sets in $\mathbb{R}$ and an $X\in E^{[\infty]}$ such that either:
	\begin{enumerate}[(1)]
		\item I has a strategy $\sigma$ in $F[X]$ such that $(\prod_{n\in\omega} P_n)\times[\sigma]\subseteq\mathbb{A}^c$, or
		\item for every $\overline{t}=(t_n)_{n\in\omega}\in \prod_{n\in\omega} P_n$, II has a strategy in $G[X]$ for playing into $\mathbb{A}_{\overline{t}}$?
	\end{enumerate}
	If so, can this be localized to a (strategic) $(p^+)$-family on $E$?
\end{ques}

Theorem \ref{thm:para_Silver} can be extended to countable sequences of perfect sets using Laver's infinite-dimensional form \cite{MR754925} of the Halpern--L\"{a}uchli Theorem \cite{MR200172}. Laver used this to show that selective ultrafilters are preserved by forcing with arbitrarily long countable support products, or ``side-by-side'', Sacks forcing. The corresponding facts for finite products of perfects sets and Sacks forcing were proven earlier by Pincus and Halpern \cite{MR626489}. Zheng \cite{MR3717938} and Kawach \cite{MR4247793} have shown that $\mathrm{FIN}$, $\mathrm{FIN}_k$, and $\mathrm{FIN}_{\pm k}$ can each be parametrized by countable sequences of perfect sets, and Zheng also established the corresponding ultrafilter preservation result for $\mathrm{FIN}$. We ask if such a preservation result is possible in our setting:

\begin{ques}\label{ques:prod_Sacks_pres}
	If $\mathcal{F}$ is a strategic $(p^+)$-filter on $E$, $\kappa$ a cardinal, and $G$ is $\mathbf{V}$-generic for a $\kappa$-length countable support product $\mathbb{S}^{(\kappa)}$ of Sacks forcing, must $\mathcal{F}$ generate a strong $(p^+)$-filter in $\mathbf{V}[G]$?
\end{ques}

A ``Yes'' answer to the local form of Question \ref{ques:prod_pres} would yield a ``Yes'' answer to Question \ref{ques:prod_Sacks_pres}, though as we have seen in the proofs of Theorems \ref{thm:local_para_Rosendal} and \ref{thm:Sacks_pres}, their proofs could be intertwined in some way.

Again, we expect that the key to resolving Questions \ref{ques:prod_pres} and \ref{ques:prod_Sacks_pres} lies in Question \ref{ques:Ellentuck}. If we were able to find an ``Ellentuck-style'' proof of Theorem \ref{thm:local_para_Rosendal}, then by adapting any fusion arguments in $\mathbb{S}$ to the countable product $\mathbb{S}^{(\omega)}$, as in \cite{MR794485}, we should be able to extend the proof to answer Question \ref{ques:prod_pres} in the affirmative. Finally, following Zheng's proof that stable ordered union ultrafilters in $\mathrm{FIN}$ are preserved by iterated Sacks forcing (Theorem 5.42 in \cite{yyz_thesis}), we expect that a ``Yes'' answer to Question \ref{ques:prod_pres} would assist in resolving Question \ref{ques:iteration} as well.

%% The Appendices part is started with the command \appendix;
%% appendix sections are then done as normal sections
%% \appendix

%% \section{}
%% \label{}

%% If you have bibdatabase file and want bibtex to generate the
%% bibitems, please use
%%
\bibliographystyle{abbrv}
\bibliography{../math_bib}{}

%% else use the following coding to input the bibitems directly in the
%% TeX file.

%\begin{thebibliography}{00}

%% \bibitem{label}
%% Text of bibliographic item

%\bibitem{}

%\end{thebibliography}
\end{document}